\newtheorem{thm}[subsection]{Theorem}
\newtheorem{defn}[subsection]{Definition}
\newtheorem{prop}[subsection]{Proposition}
\newtheorem{cor}[subsection]{Corollary}
\newtheorem{lemma}[subsection]{Lemma}
\newtheorem{remark}[subsection]{Remark}
\theoremstyle{definition}
\numberwithin{equation}{section}
\def\bpartial{{\bar\partial}}
\def\cE{\mathcal E}
\def\cF{\mathcal F}
\def\cI{\mathcal I}
\def\cJ{\mathcal J}
\def\cM{\mathcal M}
\def\cN{\mathcal N}
\def\cS{\mathcal S}
\def\cT{\mathcal T}
\def\cW{\mathcal W}
\def\R{\mathfrak R}
\def\B{\mathfrak B}
\def\Zplus{\mathbb Z_{\geq 0}}
\DeclareMathOperator{\sign}{sign}
\DeclareMathOperator{\Hom}{Hom}
\DeclareMathOperator{\Spec}{Spec}
\newfont{\german}{eufm10}
\begin{document}
\pagestyle{plain}

\title{Standard monomials and invariant theory of arc spaces II: Symplectic group }

\author{Andrew R. Linshaw}
\address{Department of Mathematics, University of Denver}
\email{andrew.linshaw@du.edu}
\thanks{A. Linshaw is supported by Simons Foundation Grant \#635650 and NSF Grant DMS-2001484.}

\author{Bailin Song}
\address{School of Mathematical Sciences, University of Science and Technology of China, Hefei, Anhui 230026, P.R. China}
\email{bailinso@ustc.edu.cn}
\thanks{B. Song is supported by  NSFC No. 12171447}

\begin{abstract}
This is the second in a series of papers on standard monomial theory and invariant theory of arc spaces. For any algebraically closed field $K$, we construct a standard monomial basis for the arc space of the Pfaffian variety over $K$. As an application, we prove the arc space analogue of the first and second fundamental theorems of invariant theory for the symplectic group. \end{abstract}

\keywords{standard monomial; invariant theory; arc space}

\maketitle
%\tableofcontents
\section{Introduction}

\subsection{Invariant theory} 
Given an algebraically closed field $K$, an algebraic group $G$ over $K$, and a finite-dimensional $G$-module $W$, a fundamental problem in invariant theory is to describe the ring of invariant polynomial functions $K[W]^G$. It is natural to also consider $K[V]^G$, where $V=W^{\oplus p}\bigoplus {W^*}^{\oplus q}$ is the direct sum of $p$ copies of $W$ and $q$ copies of the dual $G$-module $W^*$. In Weyl's terminology \cite{W}, a {\it first fundamental theorem of invariant theory} (FFT) for the pair $(G,W)$ is a generating set for $K[V]^G$, and a {\it second fundamental theorem} (SFT) for $(G,W)$ is a generating set for the ideal of relations among the generators of $K[V]^G$. When $\text{char} \ K = 0$, if $G$ is one of the classical groups and $W$ is the standard module, the FFTs and SFTs are due to Weyl \cite{W}. The analogous results in arbitrary characteristic were proven much later by de Concini and Procesi using standard monomial theory \cite{DCP}.

In this paper, we consider the case where $G$ is the symplectic group. For an even integer $h$, let $W=K^{\oplus h}$ be equipped with a non-degenerate, skew-symmetric bilinear form given by $w=\sum_{i=1}^{h/2} dz_{2i-1}\wedge dz_{2i}$. Then $$Sp_h(K)=\{A \in SL_h(K)|\ A\  \text{ preserves}\  w\}$$ is the symplectic group over $K$. For $p\geq 1$, let $V=W^{\oplus p}$ be the direct sum of $p$ copies of $W$. The affine coordinate ring of $V$ is 
 $$K[V]=K[a^{(0)}_{il} |\ 1\leq i\leq p, \ 1\leq l\leq h].$$ 
\begin{thm} (FFT and SFT for $Sp_h(K)$ and $W = K^{\oplus h}$) \label{classicalFFTSFT}
\begin{enumerate}
\item The ring of invariants $K[V]^{Sp_h(K)}$ is generated by 
 \begin{equation*} \label{generators:classical} X^{(0)}_{uv}= \sum_{i=1}^{h/2} (a^{(0)}_{u{2i-1}}a^{(0)}_{v2i}-a^{(0)}_{v{2i-1}}a^{(0)}_{u2i}),\qquad 1\leq u,v \leq p. \end{equation*}
\item The ideal of relations among the generators  in (1) is generated by the Pfaffians
\begin{equation}\label{relations:classical} P\left(
\begin{array}{cccc}
X^{(0)}_{u_1u_1} & X^{(0)}_{u_1u_2} & \cdots &X^{(0)}_{u_1u_{h+2}}\\
X^{(0)}_{u_2u_1}& X^{(0)}_{u_2u_2} & \cdots & X^{(0)}_{u_2u_{h+2}} \\
\vdots & \vdots & \vdots & \vdots \\
X^{(0)}_{u_{h+2}u_1} & X^{(0)}_{u_{h+2}u_2} & \cdots & X^{(0)}_{u_{h+2}u_{h+2}} \\
\end{array}
\right),\qquad 1\leq u_i<u_{i+1}\leq p.
\end{equation}
\end{enumerate}
\end{thm}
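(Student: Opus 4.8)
The plan is to recast both parts as the injectivity and surjectivity of a single ring homomorphism. Consider the $Sp_h(K)$-equivariant morphism $\phi\colon V\to SM_p(K)$ sending a $p$-tuple $(v_1,\dots,v_p)\in W^{\oplus p}$ to the skew-symmetric matrix $\bigl(w(v_u,v_v)\bigr)_{1\le u,v\le p}$; on coordinate rings $\phi^*(x^{(0)}_{uv})=X^{(0)}_{uv}$. Writing $\mathcal A$ for the $p\times h$ matrix of indeterminates $(a^{(0)}_{ul})$ and $J$ for the standard symplectic $h\times h$ matrix of $w$, we have $\bigl(X^{(0)}_{uv}\bigr)=\mathcal A\,J\,\mathcal A^{\top}$, which has rank at most $h$; hence $\phi$ lands in the Pfaffian variety $Pf_{h+2}(K)\subseteq SM_p(K)$, and $\phi^*$ descends to
\[
  \psi\colon R^K/R^K[h+2]\longrightarrow (B^K)^{Sp_h(K)}\subseteq B^K,\qquad \psi\bigl(\overline{x^{(0)}_{uv}}\bigr)=X^{(0)}_{uv}.
\]
By the results recalled above, $R^K[h+2]$ is the radical (in fact prime) defining ideal of the irreducible variety $Pf_{h+2}(K)$, so $R^K/R^K[h+2]$ is its coordinate ring, a domain. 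Part (1) of the theorem is the assertion that $\psi$ is surjective and part (2) that $\psi$ is injective; together they say $\psi$ is an isomorphism.

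For part (2) I would show that $\phi$ is dominant onto $Pf_{h+2}(K)$. Given a skew-symmetric $p\times p$ matrix $M$ of rank $2m\le h$, the normal form of skew bilinear forms gives $C\in GL_p(K)$ with $C^{\top}MC=\operatorname{diag}(J_{2m},0)$, and $\operatorname{diag}(J_{2m},0)=B^{\top}JB$ for the $h\times p$ matrix $B$ having $I_{2m}$ in its upper-left corner and zeros elsewhere; hence $M=\mathcal A\,J\,\mathcal A^{\top}$ with $\mathcal A=(C^{\top})^{-1}B^{\top}$, so $M$ lies in the image of $\phi$. The matrices of maximal rank form a dense open subset of the irreducible variety $Pf_{h+2}(K)$, so $\phi$ is dominant and $\ker\phi^{*}=I\bigl(\overline{\im\phi}\bigr)=I\bigl(Pf_{h+2}(K)\bigr)=R^K[h+2]$; equivalently $\psi$ is injective. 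Thus part (2) is essentially formal once the standard monomial description of $K[Pf_{h+2}(K)]$ is in hand.

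Part (1) is the first fundamental theorem, and is the main obstacle. Over a field of characteristic zero it is Weyl's theorem: $Sp_h$ is linearly reductive, polarization reduces the claim to multilinear $Sp_h$-invariants of tuples of vectors, and these are spanned by products of the bilinear invariant $w$, so $(B^K)^{Sp_h(K)}$ is generated by the $X^{(0)}_{uv}$. For an arbitrary algebraically closed $K$ one instead invokes the characteristic-free first fundamental theorem for the symplectic group of De~Concini and Procesi, proved there by standard-monomial/straightening-law techniques in the spirit of this series (or, alternatively, one reduces to the characteristic-free FFT for $GL_h$ acting on $W^{\oplus p}\oplus(W^*)^{\oplus p}$ via the $Sp_h$-equivariant isomorphism $W\xrightarrow{\sim}W^*$, $v\mapsto w(v,\cdot)$). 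This step is genuinely needed because $Sp_h$ is not linearly reductive when $\operatorname{char}K>0$, so equating dimensions or Hilbert series of $R^K/R^K[h+2]$ with those of a candidate for the invariant ring does not by itself bound $(B^K)^{Sp_h(K)}$ from above. Granting (1) and (2), $\psi$ is an isomorphism, which is the theorem.
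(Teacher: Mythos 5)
The paper does not actually prove Theorem \ref{classicalFFTSFT}: it is quoted as background, with the characteristic-zero case attributed to Weyl \cite{W} and the general case to de Concini--Procesi \cite{DCP} (see also \cite{LR}); the theorem is then only \emph{used} (in the proof of Lemma \ref{lemma:equalGLinvariant}). Your proposal is compatible with this, since for part (1) you also ultimately invoke the characteristic-free FFT of \cite{DCP}, and your remark that linear reductivity fails in positive characteristic, so that no dimension count can substitute for it, is exactly the right caveat. Two points deserve flagging. First, your treatment of part (2) via dominance of $\phi$ is correct as far as it goes ($\ker\phi^*$ is radical because $B^K$ is reduced, and your explicit normal-form computation shows the image of $\phi$ is the full locus of skew matrices of rank $\leq h$), but it reduces the statement to the assertion that $R^K[h+2]$ is radical (prime), and that is \emph{not} among "the results recalled above": the standard monomial basis of $R^K/R^K[h+2]$ quoted in the paper does not by itself give reducedness. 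Primeness of the Pfaffian ideal is precisely the nontrivial output of the straightening-law argument in \cite{DCP,LR} (or, in the spirit of this paper, of a leading-monomial injectivity argument as in Lemma \ref{lem:leadingterm} and Theorem \ref{thm:standmonomial1} restricted to weight zero), so your part (2) is a correct reduction only modulo that citation, whereas the classical standard-monomial proof delivers injectivity and primeness simultaneously. Second, your parenthetical alternative for part (1) --- reducing to the FFT for $GL_h$ acting on $W^{\oplus p}\oplus (W^*)^{\oplus p}$ via the isomorphism $W\cong W^*$ --- does not work as stated: $Sp_h(K)$ is a proper subgroup of $GL_h(K)$, so its invariant ring is strictly larger, and identifying $W$ with $W^*$ does not convert $Sp_h$-invariants into $GL_h$-invariants; the symplectic FFT requires its own argument (as in \cite{DCP}). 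Since that aside is not your main route, it does not invalidate the proposal, but it should be deleted or replaced by a genuine reduction.
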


\subsection{Standard monomials and Pfaffian varieties} 
Standard monomial theory was initiated by Seshadri, Musili and Lakshmibai \cite{Se, LS,LMS1,LMS2}, generalizing earlier work of Hodge \cite{H}. It involves combinatorial bases for the coordinate rings of Schubert varieties inside quotients of classical groups by parabolic subgroups. In this paper, we only need the case of Pfaffian varieties. For a positive integer $p$, let \begin{equation} \label{def:R} R= R^p = \mathbb Z[x^{(0)}_{uv}|\ 1\leq u,v\leq p]/(x^{(0)}_{uv}+x^{(0)}_{vu},x^{(0)}_{uu})\end{equation} be the ring of polynomial functions with integer coefficients on the space of $p\times p$ skew-symmetric matrices. Consider the Pfaffian $P(B)$ of the skew-symmetric matrix
\begin{equation}\label{eqn:minor} B=\left(
\begin{array}{cccc}
x^{(0)}_{u_1u_1} & x^{(0)}_{u_1u_2} & \cdots &x^{(0)}_{u_1u_h}\\
x^{(0)}_{u_2u_1}& x^{(0)}_{u_2u_2} & \cdots & x^{(0)}_{u_2u_h} \\
\vdots & \vdots & \vdots & \vdots \\
x^{(0)}_{u_hu_1} & x^{(0)}_{u_hu_2} & \cdots & x^{(0)}_{u_hu_h} \\
\end{array}
\right),
\end{equation}
with $1\leq u_i<u_{i+1}\leq p$ and $h\in 2\Zplus$. Throughout this paper, we will represent $P(B)$ by the ordered $h$-tuple $|u_{h},\dots,u_2,u_1|$. 
There is a partial ordering on the set of these Pfaffians given by
$$|u_h,\dots,u_2,u_1|\leq |u'_{h'},\dots,u'_2,u'_1|, \quad \text{if } h'\leq h, u_i\leq u'_i.$$ 
$R$ has a standard monomial basis (cf.~\cite{LR}) with
respect to this partially ordered set; the ordered products $A_1A_2\cdots A_k$ of Pfaffians $A_i$ with $A_i\leq A_{i+1}$, form a basis of $R$.

Similarly, let $R[h]$ be the ideal of $R$ generated by the Pfaffians of the diagonal $h\times h$-minors, which are precisely the elements of the form \eqref{relations:classical} with $h+2$ replaced by $h$. Let 
\begin{equation} \label{def:Rh} R_h=R\slash R[h+2]. \end{equation} Then $R_h$ has a basis consisting of ordered products $A_1A_2\cdots A_k$ of the Pfaffians $A_i$ with $|h,\dots,2,1| \leq   A_i\leq A_{i+1}$.

For an arbitrary algebraically closed field $K$, let $SM_p = SM_{p}(K)$ be the affine space of $p\times p$ skew-symmetric matrices with entries in $K$. The affine coordinate ring $K[SM_p]$ is obtained from $R$ by base change, that is, $K[SM_p] = R\otimes_{\mathbb{Z}} K$. Let $K[SM_p][h]$ be the ideal generated by the Pfaffians of the diagonal $h\times h$ minors. The Pfaffian variety $Pf_h = Pf_h(K)$ is a closed subvariety of $SM_{p}$ with $K[SM_p][h]$ as the defining ideal. Then for an even integer $h$, the affine coordinate ring $K[Pf_{h}] = K[SM_p]/ K[SM_p][h]$ has a standard monomial basis: the ordered products of  $A_1A_2\cdots A_k$ with $|h-2,\dots,2,1|\leq   A_i\leq A_{i+1}$ form a basis of $K[Pf_{h}]$. For $G = Sp_h(K)$ and $V$ as in Theorem \ref{classicalFFTSFT}, we have $V/\!\!/Sp_k(K) = \text{Spec} \ K[V]^{Sp_h(K)} \cong Pf_{h+2}$. The proof of Theorem \ref{classicalFFTSFT} in \cite{DCP} makes use of this standard monomial basis. The key is to consider an integral form of $K[V]$, namely, $\mathbb{Z}[a_{il}^{(0)}]$ for $1\leq i\leq p$ and $1\leq l\leq h$, and show that the natural map $R_h \rightarrow \mathbb{Z}[a_{il}^{(0)}]$ is injective. After tensoring with $K$, this yields an injective map $K[Pf_{h+2}] \rightarrow K[V]$ whose image is precisely $K[V]^{Sp_h(K)}$.  A uniform treatment of these results for all the classical results using standard monomial theory can be found in the book \cite{LR} of Lakshmibai and Raghavan.

\subsection{Arc spaces} Given an irreducible scheme $X$ of finite type over $K$, the arc space $J_\infty(X)$ is defined as the inverse limit of the finite jet schemes $J_n(X)$ \cite{EM}. By Corollary 1.2 of \cite{B}, it is determined by its functor of points: for every $K$-algebra $A$, we have a bijection
$$\Hom(\Spec A, J_\infty(X))\cong\Hom(\Spec A[[t]], X).$$ If $G$ is an algebraic group over $K$, $J_{\infty}(G)$ is again an algebraic group over $K$. If $V$ is a finite-dimensional $G$-module, there is an induced action of $J_{\infty}(G)$ on $J_{\infty}(V)$. The quotient morphism $V\to V/\!\!/G$ induces a morphism $J_\infty(V)\to J_\infty(V/\!\!/G)$, so we have a morphism 
\begin{equation} \label{equ:invmap} J_\infty(V)/\!\!/J_\infty(G)\to J_\infty(V/\!\!/G),\end{equation} and the corresponding ring homomorphism
\begin{equation} \label{equ:invringmap} K[J_{\infty}(V/\!\!/G)] \rightarrow K[J_{\infty}(V)]^{J_{\infty}(G)}.\end{equation}  In the case $K=\mathbb C$, if $G$ is connected and $V/\!\!/G$ is smooth, it was shown in \cite{LSSI} that \eqref{equ:invringmap} is an isomorphism, and under some additional hypotheses this also holds when $V/\!\!/G$ is a complete intersection. In general, \eqref{equ:invringmap} is neither injective nor surjective.

 \subsection{Standard monomials for arc spaces} Let 
 \begin{equation} \label{eqn:refofRR} \R=\R^{p}=\mathbb Z[x^{(k)}_{uv}|\ 1\leq u,v\leq p,\ k\geq 0]/(x^{(k)}_{uv}+x^{(k)}_{vu}, x^{(k)}_{uu}),\end{equation} with a derivation $\partial$ characterized by $\partial x_{uv}^{(k)}=(k+1)x_{uv}^{(k+1)}$. It can be regarded as the ring of polynomial functions with integer coefficients on the arc space of $p\times p$ skew-symmetric matrices; in particular, $K[J_{\infty}(SM_p)] \cong \R \otimes_{\mathbb{Z}} K$.

Let $\R[h]$ be the ideal of $\R$ generated by the Pfaffians of the diagonal $h\times h$ minors in the form of (\ref{eqn:minor}) and their normalized derivatives $\frac 1 {n!}\partial^n P(B)$.  Let $\R_h=\R\slash\R[h+2]$. Let $\cJ_r$ be the set of Pfaffians of the matrices of the form of (\ref{eqn:minor}) with $h\leq r$ and their normalized derivatives $\frac 1 {n!}\partial^n P(B)$. Note that $R$ and $R_h$ are naturally subrings of $\R$ and $\R_h$, respectively. In Section \ref{sect:standardmono}, we will define a notion of standard monomial on $\cJ_h$ that extends the above notion on $R_h$, and in Section \ref{section:3} we will prove the following result.

\begin{thm}\label{thm:standard}For an even integer $h$, $\R_h$ has a $\mathbb Z$-basis given by the standard monomials of $\cJ_h$.
\end{thm}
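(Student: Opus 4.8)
The plan is to lift the classical standard monomial basis for $R_h$ (for even $h$) to the arc ring $\R_h$ by a filtration/degeneration argument, reducing the statement to a linear-algebra count in each graded piece. First I would equip $\R$ with the grading in which $x^{(k)}_{uv}$ has weight $k$ (so that $\partial$ raises weight by $1$) together with the usual polynomial degree; each graded piece $\R_{h,(d,w)}$ of fixed degree $d$ and weight $w$ is a finitely generated free $\mathbb Z$-module, and the ideal $\R[h+2]$ is graded, so it suffices to prove that in each such piece the standard monomials of $\cJ_h$ are a $\mathbb Z$-basis. The notion of standard monomial on $\cJ_h$ to be set up in Section~\ref{sect:standardmono} will assign to each Pfaffian generator $\frac1{n!}\partial^n P(B)$ a ``leading term'' — the product of the highest-weight variables appearing, i.e.\ a monomial in the $x^{(k)}_{uv}$ — in such a way that the leading term of a standard monomial of $\cJ_h$ is again a standard monomial in the classical sense for an associated graded polynomial ring, and distinct standard monomials of $\cJ_h$ have distinct leading terms.

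The argument then has two halves. \emph{Spanning:} I would show the standard monomials of $\cJ_h$ span $\R_h$ over $\mathbb Z$ by a straightening algorithm: any product of generators from $\cJ_h$ can be rewritten, modulo $\R[h+2]$ and modulo terms that are strictly smaller in a suitable term order refining weight, as a $\mathbb Z$-combination of standard ones. This uses the classical straightening relations among Pfaffians (the Plücker-type quadratic relations, cf.~\cite{LR}) applied to the weight-zero pieces, together with the fact that applying $\partial$ and taking normalized derivatives $\frac1{n!}\partial^n$ of a classical straightening relation yields a relation in $\R$ with integer coefficients whose leading term is the $\partial$-derivative of the classical leading relation; termination follows because the term order is a well-order on each graded piece. \emph{Independence:} I would show the leading-term map sends the set of standard monomials of $\cJ_h$ injectively onto the set of classical standard monomials of the associated graded ring $\mathrm{gr}\,\R_h$, which by Theorem~\ref{thm:standard} applied in weight zero — that is, by the classical de Concini--Procesi result for $R_h$ extended monomial-by-monomial across weights — is a $\mathbb Z$-basis; since a family whose leading terms are $\mathbb Z$-linearly independent is itself $\mathbb Z$-linearly independent, and since spanning has been established, the count matches in every graded piece and we conclude.

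The main obstacle I anticipate is the straightening step, specifically controlling the interaction of the derivation $\partial$ with the quadratic straightening relations over $\mathbb Z$. Over a field of characteristic zero one could divide by factorials freely, but here one must check that the normalized derivatives $\frac1{n!}\partial^n$ of the defining Pfaffians and of the straightening relations genuinely lie in $\R$ (not just $\R\otimes\mathbb Q$) and that the induced relations still have the expected leading terms with \emph{unit} leading coefficients, so that the straightening algorithm works integrally and the leading-term map is a bijection rather than merely an injection after tensoring with $\mathbb Q$. Handling this will require a careful choice of the weight function on $\cJ_h$ and of the monomial order, and a Leibniz-rule bookkeeping argument showing that $\frac1{n!}\partial^n$ of a product of Pfaffians expands into standard monomials of $\cJ_h$ with integer coefficients; once that is in place, the reduction to the classical case is formal. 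A secondary point to verify is that $\R[h+2]$ is exactly the $\partial$-ideal generated by the diagonal Pfaffians — i.e.\ that no additional relations are needed — which again should follow from the graded argument since in weight zero it is the classical statement and the leading terms of the generators of $\R[h+2]$ generate the leading-term ideal of $\mathrm{gr}\,\R[h+2]$.
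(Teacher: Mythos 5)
There is a genuine gap, and it is concentrated in your independence step. You propose to prove linear independence intrinsically, by a leading-term/associated-graded argument inside $\R_h$: assign to each element of $\cJ_h$ a leading monomial in the $x^{(k)}_{uv}$, and argue that the leading terms of standard monomials form a $\mathbb Z$-basis of $\mathrm{gr}\,\R_h$ ``by the classical de Concini--Procesi result extended monomial-by-monomial across weights.'' But that extension is precisely the statement to be proven. Knowing the weight-zero case says nothing about whether the initial ideal of $\R[h+2]$ (the ideal generated by the Pfaffians of the diagonal $(h+2)$-minors \emph{and all their normalized derivatives}) is generated by the initial terms of those given generators; a priori the arc-space ideal could contain elements with unexpected leading terms, which would kill some combination of your candidate basis elements in the quotient. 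You flag this yourself as ``a secondary point to verify,'' but it is not secondary and it does not ``follow from the graded argument'': it is essentially equivalent to Theorem \ref{thm:standard} itself (note that even integrality of $\R_h$ is only obtained as a corollary). The paper circumvents this circularity by an external realization: independence is proven via the ring homomorphism $Q_h:\R_h\to\B$ sending $x^{(k)}_{uv}$ to $\bpartial^k\sum_i(a^{(0)}_{u,2i-1}a^{(0)}_{v,2i}-a^{(0)}_{v,2i-1}a^{(0)}_{u,2i})$, and Lemma \ref{lem:leadingterm} shows that the images of distinct standard monomials have distinct leading tableaux in $\B$ with coefficient $\pm1$; no statement about initial ideals inside $\R$ is needed. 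This auxiliary map (which is also what later drives the invariant-theory application) is the key idea missing from your proposal.

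Your spanning half is closer in spirit to the paper (straightening plus a well-order, Lemmas \ref{lem:base0} and \ref{lem:base1}), but it also underestimates the content. The relations actually needed are not simply normalized derivatives of the classical quadratic straightening relations: to force the term $J_aJ_b$ to appear with coefficient exactly $1$ while the ``wrong-weight'' terms vanish, the paper must take integer linear combinations of the derived relations with prescribed coefficients $a_k$ on a window of weights, which works integrally only because the relevant binomial matrix has determinant $\pm1$ (Lemma \ref{lem:fullrelation0}); moreover the reduction must be shown to produce factors that are not greater than $E_{b-2}$ in the sense of the $\cE$-representatives (Lemma \ref{lemma:straight1}), since ``standard'' here is defined through the maximality conditions on representatives in $\cE(J)$, not merely by a pairwise order on $\cJ$. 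So even granting your term order, the straightening step requires substantially more than differentiating classical relations, and the independence step as written cannot be repaired without introducing something like the map $Q_h$.
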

The proof is based on a technical result (Lemma \ref{lem:base0}) whose proof is quite long and is deferred to Section \ref{section:proof2.5}.

Let $J_\infty(Pf_{h+2})$ be the arc space of the Pfaffian variety $Pf_{h+2}$. Then the affine coordinate ring $K[J_\infty(Pf_{h+2})]$ is $\R_{h}\otimes_{\mathbb{Z}} K$, so we immediately have the following corollary.
 \begin{cor}\label{cor:standarddeterm} $K[J_\infty(Pf_{h+2})]$ has a $K$-basis given by the standard monomials of $\cJ_{h}$. 
  \end{cor}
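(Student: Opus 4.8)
The plan is to reduce Corollary \ref{cor:standarddeterm} to Theorem \ref{thm:standard} by a base-change argument. The key point is that $\R$ is a ring of polynomial functions over $\mathbb Z$ on the arc space of skew-symmetric matrices, and the arc space construction commutes with base change: if $X$ is defined over $\mathbb Z$ by an ideal $I \subseteq \mathbb Z[x_i]$, then $J_\infty(X_K)$ has coordinate ring $(\mathbb Z[x_i^{(k)}]/I^{(\infty)}) \otimes_{\mathbb Z} K$, where $I^{(\infty)}$ is the differential ideal generated by $I$ under the derivation $\partial$. Thus the first step is to identify the differential ideal cutting out $J_\infty(Pf_{h+2}(K))$ inside $K[x_{uv}^{(k)}]$. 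Since $Pf_{h+2}(K)$ is defined by $R^K[h+2]$, i.e.\ the Pfaffians of the diagonal $(h+2)\times(h+2)$-minors, its arc space is defined by the differential ideal these Pfaffians generate, which is precisely $\R[h+2] \otimes K$ by the definition of $\R[h+2]$ as the ideal generated by those Pfaffians together with their normalized derivatives $\frac{1}{n!}\partial^n P(B)$. Hence the affine coordinate ring of $J_\infty(Pf_{h+2}(K))$ is $(\R/\R[h+2]) \otimes_{\mathbb Z} K = \R_h \otimes_{\mathbb Z} K$, exactly as asserted in the excerpt preceding the corollary.

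Granting this identification, the second step is purely formal: Theorem \ref{thm:standard} says the standard monomials of $\cJ_h$ form a $\mathbb Z$-basis of $\R_h$. A $\mathbb Z$-basis of a $\mathbb Z$-module $M$ remains a $K$-basis of $M \otimes_{\mathbb Z} K$ for any field $K$ (indeed any commutative ring), since $\mathbb Z$ is a domain and tensoring a free module with a flat module preserves bases. Therefore the images of the standard monomials of $\cJ_h$ under $\R_h \hookrightarrow \R_h \otimes_{\mathbb Z} K$ form a $K$-basis of the affine coordinate ring of $J_\infty(Pf_{h+2}(K))$. One small bookkeeping remark: the notion of standard monomial is defined on the index set $\cJ_h$ combinatorially, so it makes sense over any coefficient ring, and the images of these monomials in $\R_h \otimes K$ are exactly the "standard monomials of $\cJ_h$" in the coordinate ring; there is nothing further to check once the $\mathbb Z$-basis statement is in hand.

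The only genuine content here is the base-change compatibility of the arc-space functor with the defining equations, and in particular the claim that forming the arc space of a closed subscheme corresponds to passing to the differential ideal generated by the original equations. This is standard (see \cite{EM}), but it does rely on the fact that $Pf_{h+2}(K)$ is defined over $\mathbb Z$ by the explicit Pfaffian equations uniformly in $K$, which is clear from the construction of $R^K[h+2]$ as $R[h+2] \otimes K$ with $R[h+2]$ the integrally-defined ideal. I would therefore expect the main (and essentially only) obstacle to be spelled out in the reference to \cite{EM} or in Section \ref{sect:standardmono}, namely verifying that the normalized derivatives $\frac{1}{n!}\partial^n P(B)$ do indeed generate the full differential ideal over $\mathbb Z$ (so that no denominators are lost upon reduction mod a prime $p$); once that is in place, the corollary is immediate. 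In fact this is exactly why $\R[h+2]$ was defined using the \emph{normalized} derivatives rather than the plain $\partial^n P(B)$: it guarantees the construction is flat over $\mathbb Z$, so that Theorem \ref{thm:standard} descends to every $K$ without modification.
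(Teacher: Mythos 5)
Your argument is correct and is essentially the paper's proof: the paper likewise identifies the coordinate ring of $J_\infty(Pf_{h+2}(K))$ with $\R_h\otimes_{\mathbb Z}K$ (using that the arc space of an affine scheme is cut out by the normalized derivatives $\bpartial^l f_j$ of the defining equations, which is exactly why $\R[h+2]$ is defined via $\frac{1}{n!}\partial^n P(B)$), and then deduces the corollary in one line from Theorem \ref{thm:standard} by the observation that a $\mathbb Z$-basis of $\R_h$ becomes a $K$-basis of $\R_h\otimes_{\mathbb Z}K$. Your extra remarks on base change and the integrality of the normalized derivatives just make explicit what the paper establishes in its arc-space preliminaries.
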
 
  
\subsection{Application in invariant theory}
Our main application of Theorem \ref{thm:standard} is to prove the arc space analogue of Theorem \ref{classicalFFTSFT}. As above, for an even integer $h \geq 2$, let $Sp_h(K)$ be the symplectic group over $K$, $W=K^{\oplus h}$ its standard representation, and $V=W^{\oplus p}$ the sum of $p$ copies of $W$. Then $$K[J_{\infty}(V)]=K[a^{(k)}_{il}|\ 1\leq i\leq p, \ 1\leq l\leq h, \ k\in \Zplus],$$ which has an induced action of $J_\infty(Sp_h(K))$. The following theorem is the arc space analogue of Theorem \ref{classicalFFTSFT}, and is proved in Section \ref{section:4}.
\begin{thm}\label{thm:main}
	Let $X^{(k)}_{uv}=\bpartial^k\sum_{i=1}^{h/2} (a^{(0)}_{u{2i-1}}a^{(0)}_{v2i}-a^{(0)}_{v{2i-1}}a^{(0)}_{u2i})$, for $1\leq u,v \leq p$, where $\bpartial^k = \frac{1}{k!} \partial$.
	\begin{enumerate}
		\item The ring of invariants $K[J_{\infty}(V)]^{J_\infty(Sp_h(K))}$ is generated by $X^{(k)}_{uv}$.
		\item The ideal of relations among the generators  in (1) is generated by 
		\begin{equation}\label{eqn:minorX} \bpartial^k P\left(
		\begin{array}{cccc}
		X^{(0)}_{u_1u_1} & X^{(0)}_{u_1u_2} & \cdots &X^{(0)}_{u_1u_{h+2}}\\
		X^{(0)}_{u_2u_1}& X^{(0)}_{u_2u_2} & \cdots & X^{(0)}_{u_2u_{h+2}} \\
		\vdots & \vdots & \vdots & \vdots \\
		X^{(0)}_{u_{h+2}u_1} & X^{(0)}_{u_{h+2}u_2} & \cdots & X^{(0)}_{u_{h+2}u_{h+2}} \\
		\end{array}
		\right), \qquad 1\leq u_i<u_{i+1}\leq p.
		\end{equation}
		\item  $K[J_{\infty}(V)]^{J_\infty(Sp_h(K))}$ has a $K$-basis given by standard monomials of $\cJ_h$.
	\end{enumerate}
\end{thm}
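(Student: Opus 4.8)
The plan is to deduce all three statements from a single assertion: the ring homomorphism
\[
\phi\colon \R_h\otimes K\longrightarrow (\B^K)^{J_\infty(Sp_h(K))},\qquad x^{(k)}_{uv}\longmapsto X^{(k)}_{uv},
\]
is an isomorphism (here $X^{(k)}_{uv}$ is taken in its normalized form, as with the generators of $\R[h+2]$). This $\phi$ is well defined: the $X^{(0)}_{uv}$ satisfy the $(h+2)\times(h+2)$ Pfaffian relations by Theorem~\ref{classicalFFTSFT}(2), and since $\phi$ intertwines $\partial$ with $\bpartial$ it annihilates the $\partial$-stable ideal $\R[h+2]$; its image lies in the invariants because $X^{(0)}_{uv}$ is $Sp_h(K)$-invariant and $\bpartial$ preserves invariants. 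By Corollary~\ref{cor:standarddeterm}, $\R_h\otimes K$ is the coordinate ring of $J_\infty(Pf_{h+2}(K))$, and under this identification $\phi$ is exactly the map \eqref{equ:invringmap} for $(V,G)=(W^{\oplus p},Sp_h(K))$. Granting that $\phi$ is an isomorphism: part (1) is its surjectivity; part (2) is its injectivity, since $\ker\phi$ automatically contains $\R[h+2]\otimes K$ --- the ideal generated by the Pfaffians of the diagonal $(h+2)$-minors and their normalized derivatives, i.e.\ by the relations \eqref{eqn:minorX} --- so injectivity forces $\ker\phi=\R[h+2]\otimes K$; and part (3) follows by transporting the standard monomial basis of $\R_h\otimes K$ through the isomorphism.

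For injectivity I would show that the $\phi$-images of the standard monomials of $\cJ_h$ are $K$-linearly independent in $\B^K$; since these monomials form a basis of $\R_h\otimes K$, this gives injectivity of $\phi$. The tool is a leading-term argument. Fix a monomial order on $\B^K=K[a^{(k)}_{il}]$ compatible with the $\bpartial$-weight. For a sub-Pfaffian indexed by a subset $S$ one has $\phi\bigl(\tfrac1{n!}\partial^nP(B_S)\bigr)=\tfrac1{n!}\bpartial^nP(X_S)$, and by the Leibniz rule for $\bpartial^n$ its leading monomial is governed by the classical leading-term computation for $P(X_S)$ of de Concini--Procesi and Lakshmibai--Raghavan (\cite{DCP},\cite{LR}) and depends only on the pair $(S,n)$. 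One then checks that distinct standard monomials of $\cJ_h$ have distinct leading monomials under $\phi$; this is the arc-space counterpart of the classical independence of standard monomials, and it becomes combinatorial once the leading term of each factor is pinned down. (Equivalently, injectivity of $\phi$ amounts to dominance of $J_\infty(V)\to J_\infty(Pf_{h+2}(K))$, which follows from the existence of local sections of $V\to Pf_{h+2}(K)$ over the smooth locus.)

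Surjectivity --- the arc-space first fundamental theorem --- is the main obstacle. First, the constant jets $Sp_h(K)\subset J_\infty(Sp_h(K))$ act on $\B^K$ preserving the $\bpartial$-weight grading, as the standard representation of $Sp_h(K)$ on each graded piece; hence $(\B^K)^{J_\infty(Sp_h(K))}$ is weight-graded and it suffices to show every weight-$N$ invariant is a polynomial in the $X^{(k)}_{uv}$. Such an invariant is in particular $Sp_h(K)$-invariant for the diagonal action on the family of vectors $\{(a^{(k)}_{i1},\dots,a^{(k)}_{ih})\}_{i,k}$, so by the classical FFT for arbitrarily many vectors (valid over $\mathbb Z$ by \cite{DCP},\cite{LR}) it is a polynomial in the contractions $X_{(i,k),(j,l)}=\sum_m(a^{(k)}_{i,2m-1}a^{(l)}_{j,2m}-a^{(l)}_{j,2m-1}a^{(k)}_{i,2m})$. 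The remaining constraint is invariance under the pro-unipotent subgroup $U=\ker\bigl(J_\infty(Sp_h(K))\to Sp_h(K)\bigr)$, and since $J_\infty(Sp_h(K))=Sp_h(K)\cdot U$ this is the only extra condition; one must show that $K[X_{(i,k),(j,l)}]^{U}$ is generated by the diagonal combinations $\sum_{a+b=N}X_{(i,a),(j,b)}$, which (in the normalized form) are the $X^{(N)}_{uv}$ --- equivalently, the coefficients of the $K[[t]]$-valued symplectic pairings of the $p$ arcs.

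I expect this last step to be the crux. The most conceptual route is the functor-of-points reformulation: for every $K$-algebra $A$ one has $J_\infty(V)(A)=V(A[[t]])$, $J_\infty(Sp_h(K))(A)=Sp_h(A[[t]])$, and $J_\infty(Pf_{h+2}(K))(A)=Pf_{h+2}(A[[t]])$, with $J_\infty(V)\to J_\infty(Pf_{h+2}(K))$ given on $A$-points by forming pairwise symplectic products; applying the first and second fundamental theorems for $Sp_h$ over the base ring $A[[t]]$ --- which hold in a base-independent form, this being precisely the feature of the classical groups that makes \eqref{equ:invringmap} an isomorphism here while it fails for general reductive $G$ --- identifies this morphism with the categorical quotient, and functoriality in $A$ yields that $\phi$ is an isomorphism. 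The delicate point is the base change to the \emph{non-reduced} ring $A[[t]]$ (surjectivity of the classical quotient map $(A[[t]]^h)^{\oplus p}\to Pf_{h+2}(A[[t]])$ on $A[[t]]$-points for non-reduced $A$), and it is there that one must invoke the $\mathbb Z$-flatness and the explicit standard monomial basis of $\R_h$ furnished by Theorem~\ref{thm:standard}; alternatively one can argue directly, handling the $U$-invariance above by the straightening algorithm underlying Theorem~\ref{thm:standard}. Once $\phi$ is known to be an isomorphism, parts (1)--(3) follow as explained above.
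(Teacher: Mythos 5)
Your reduction of the three statements to the single claim that $\phi=Q_h^K\colon \R_h\otimes K\to(\B^K)^{J_\infty(Sp_h(K))}$ is an isomorphism is sound, and your injectivity argument (linear independence of the leading monomials of the images of standard monomials) is exactly the paper's route via Lemma \ref{lem:leadingterm} and Theorem \ref{thm:injectiveGL}; note only that your parenthetical alternative ``injectivity amounts to dominance of $J_\infty(V)\to J_\infty(Pf_{h+2}(K))$'' is circular, since dominance controls the kernel only up to nilpotents and reducedness (indeed integrality) of $\R_h^K$ is itself a \emph{consequence} of injectivity in the paper, not an input.

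The genuine gap is part (1), the arc-space first fundamental theorem, i.e.\ surjectivity of $\phi$. Your first reduction (constant jets $Sp_h(K)\subset J_\infty(Sp_h(K))$ plus classical FFT for arbitrarily many vectors, leaving the invariance under the pro-unipotent kernel $U$) is fine as far as it goes, but the remaining claim --- that the $U$-invariants of the subring generated by all contractions $X_{(i,k),(j,l)}$ are generated by the $X^{(N)}_{uv}$ --- is precisely the hard content, and neither of your two suggestions establishes it. The functor-of-points route does not work formally: generation of the invariant ring is not a statement about $A$-points, the classical FFT/SFT over $A[[t]]$ does not identify $K[J_\infty(V)]^{J_\infty(G)}$ with $K[J_\infty(V/\!\!/G)]$, and the map $V(A[[t]])\to Pf_{h+2}(A[[t]])$ need not be surjective (you flag this ``delicate point'' yourself but offer no argument); the alternative ``handle the $U$-invariance by the straightening algorithm'' is not developed, and $U$ is a pro-unipotent group acting in arbitrary characteristic on a non-polynomial ring, so this is not a routine verification. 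The paper's actual proof of surjectivity (Theorem \ref{thm:JGLinvariant}) runs differently: for $p\geq h$ it localizes at $\Delta=Q_h^K(|h,\dots,1|)$, uses the Bardsley--Richardson \'etale slice theorem (Proposition \ref{prop:principlebundle}) to see that $V_\Delta$ is a principal $Sp_h(K)$-bundle in the \'etale topology, proves that the arc functor commutes with such quotients via \'etale base change (Proposition \ref{prop:principlequotient}), so ${\B^K_\Delta}^{J_\infty(G)}=\C^K_\Delta$, and then descends from the localization by a leading-monomial induction resting on the standard monomial basis to get $\B^K\cap\C^K_\Delta=\C^K$; the case $p<h$ is reduced to $p+h$ copies by a direct-sum splitting. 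Some argument of comparable substance (or the paper's own) is needed where your proposal currently says ``I expect this last step to be the crux.''
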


\begin{cor}\label{cor:quot} For all $h \geq 1$ and $p\geq 1$, the map $K[J_{\infty}(V/\!\!/Sp_h(K))] \rightarrow K[J_{\infty}(V)]^{J_{\infty}(Sp_h(K))}$ given by \eqref{equ:invringmap}, is an isomorphism. In particular,
$$J_\infty(V)/\!\!/J_\infty(Sp_h(K))\cong J_\infty(V/\!\!/Sp_h(K)).$$
\end{cor}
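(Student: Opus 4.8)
The plan is to deduce Corollary \ref{cor:quot} from Theorem \ref{thm:main} together with Corollary \ref{cor:standarddeterm}, which together supply both sides of the map \eqref{equ:invringmap} in explicit standard-monomial form. First I would recall that by the classical second fundamental theorem (Theorem \ref{classicalFFTSFT}(2)) we have $V/\!\!/Sp_h(K) \cong Pf_{h+2}(K)$, so that $K[J_\infty(V/\!\!/Sp_h(K))] \cong K[J_\infty(Pf_{h+2}(K))] \cong \R_h \otimes K$. By Corollary \ref{cor:standarddeterm} this ring has a $K$-basis given by the standard monomials of $\cJ_h$. On the other side, Theorem \ref{thm:main}(1) says $K[J_\infty(V)]^{J_\infty(Sp_h(K))}$ is generated by the $X^{(k)}_{uv}$, and Theorem \ref{thm:main}(2) identifies the ideal of relations among those generators as precisely the arc-space Pfaffian ideal; hence $K[J_\infty(V)]^{J_\infty(Sp_h(K))}$ is the quotient of a polynomial ring in the $X^{(k)}_{uv}$ by exactly the defining relations of $\R_h \otimes K$. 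Finally Theorem \ref{thm:main}(3) records that $K[J_\infty(V)]^{J_\infty(Sp_h(K))}$ has a $K$-basis given by the standard monomials of $\cJ_h$ as well.

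The map \eqref{equ:invringmap} is induced by the quotient morphism $V \to V/\!\!/Sp_h(K)$; concretely it sends the arc-space coordinate on $Pf_{h+2}(K)$ corresponding to the Pfaffian entry $X^{(0)}_{uv}$ (and its $\bpartial$-derivatives) to the invariant $X^{(k)}_{uv} \in \B^K$. Thus under the identifications above, \eqref{equ:invringmap} is the ring homomorphism $\R_h \otimes K \to K[J_\infty(V)]^{J_\infty(Sp_h(K))}$ that carries each generator $x^{(k)}_{uv}$ of $\R_h\otimes K$ to $X^{(k)}_{uv}$, and hence carries each Pfaffian generator of $\cJ_h$ to the corresponding Pfaffian in the $X$'s, i.e. it sends standard monomials to standard monomials bijectively. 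Because it is compatible with $\bpartial$ and with the partial order, it carries the standard-monomial $\mathbb Z$-basis of $\R_h\otimes K$ onto the standard-monomial basis of $K[J_\infty(V)]^{J_\infty(Sp_h(K))}$ from Theorem \ref{thm:main}(3). A homomorphism sending one basis bijectively to another is an isomorphism, so \eqref{equ:invringmap} is an isomorphism, and dualizing gives $J_\infty(V)/\!\!/J_\infty(Sp_h(K)) \cong J_\infty(V/\!\!/Sp_h(K))$.

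The only real content is checking that the map \eqref{equ:invringmap} actually sends the generator $x^{(k)}_{uv}$ to $X^{(k)}_{uv}$ — i.e. that it is the obvious map on generators — and that it is surjective. Surjectivity is immediate from Theorem \ref{thm:main}(1), since the image contains all the $X^{(k)}_{uv}$, which generate the target. Injectivity then follows because a surjection of free $\mathbb Z$- (or $K$-) modules that carries a specified spanning set (the standard monomials, a basis of the source by Corollary \ref{cor:standarddeterm}) to a linearly independent set (the standard monomials, a basis of the target by Theorem \ref{thm:main}(3)) must be injective. I expect the main obstacle to be purely bookkeeping: verifying that the composite identification of $K[J_\infty(V/\!\!/Sp_h(K))]$ with $\R_h \otimes K$ is $\bpartial$-equivariant and matches the Pfaffian generators of $\cJ_h$ on the nose, so that \textquotedblleft standard monomial\textquotedblright\ means the same thing on both sides. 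Granting Theorem \ref{thm:main}, no further hard analysis is needed; Corollary \ref{cor:quot} is a formal consequence together with the classical FFT/SFT and the arc-space standard monomial basis.
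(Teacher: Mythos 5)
Your proposal is correct and follows essentially the same route as the paper: identify $K[J_\infty(V/\!\!/Sp_h(K))]$ with $\R_h\otimes K$ and the map \eqref{equ:invringmap} with $Q_h^K$ (which sends $x^{(k)}_{uv}\mapsto X^{(k)}_{uv}$ since $P^*_\infty$ commutes with $\bpartial$), then get surjectivity onto the invariants from the first fundamental theorem for arc spaces and injectivity from the standard monomial bases. The paper cites Theorems \ref{thm:injectiveGL} and \ref{thm:JGLinvariant} directly where you invoke the packaged Theorem \ref{thm:main}, but these are the same ingredients.
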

Corollary \ref{cor:quot} generalizes Theorem 4.5 of  \cite{LSSI}, which is the case $K = \mathbb{C}$ and $p \leq h+2$. A similar result was proven in \cite{LS1} for the general linear group $GL_k(K)$. The approach in this paper is similar to \cite{LS1} but more involved since we need a result of Bardsley and Richardson \cite{BR} which provides a version of the Luna slice theorem in arbitrary characteristic. 

Theorem \ref{thm:main} has significant applications to vertex algebras which are developed in \cite{LS2,LS3,CLS}. First, it provides a complete description of certain cosets of affine vertex algebras inside free field algebras that are related to the classical Howe pairs. This implies the classical freeness of the simple affine vertex (super)algebras $L_k(\mathfrak{osp}_{m|2n})$ for integers $k,m,n \geq 0$ satisfying $-\frac{m}{2} + n+ k + 1 > 0$. Next, for any smooth manifold $X$ in either the algebraic, complex analytic or smooth settings, the {\it chiral de Rham complex}  $\Omega^{\text{ch}}_X$ is a sheaf of vertex algebras on $X$ that was introduced by Malikov, Schechtman and Vaintrob in \cite{MSV}. Theorem \ref{thm:main} is essential in the description of the vertex algebra of global sections $\Gamma(X, \Omega^{\text{ch}}_X)$ for a $d$-dimensional compact K\"ahler manifold $X$ with holonomy group  $Sp(\frac{d}{2})$. This algebra is isomorphic to the simple small $\cN = 4$ superconformal algebra with central charge $3d$, and is an important building block in the structure of $\Gamma(X, \Omega^{\text{ch}}_X)$ for an arbitrary compact Ricci-flat K\"ahler manifold $X$ \cite{LS2}.

\section{Standard monomials} \label{sect:standardmono}

Fix an integer $p\geq 1$ and recall the ring
$$\R=\mathbb Z[x^{(k)}_{uv}|\ 1\leq u,v\leq p,\ k\geq 0]/(x^{(k)}_{uv}+x^{(k)}_{vu}, x^{(k)}_{uu}),$$ with derivation $\partial$ given on generators by $\partial x_{uv}^{(k)}=(k+1)x_{uv}^{(k+1)}$. As above, this is an integral version of the coordinate ring of the arc space of the space $SM_p$ of $p\times p$ skew-symmetric matrices, i.e., $K[J_{\infty}(SM_p)] = \R \otimes_{\mathbb{Z}} K$, for any field $K$.

For $ l \geq 0$, we have the $l^{\text{th}}$ normalized derivation $\bpartial^l=\frac 1 {l!}\partial^l$ on $\R$. It satisfies $\bpartial^l x_{ij}^{(k)}=C_{k+l}^l  x_{ij}^{(k+l)}\in \R$, where for $k,n\in \mathbb Z_{\geq 0}$, $$C_n^k=\left\{ \begin{array}{cc}
\frac{n!}{k!(n-k)!}, &0\leq k\leq n;\\
0, &\text{otherwise.}
\end{array}\right.
$$

The following propositions are easy to verify.
\begin{prop} For any $a,b\in\R$,
 $$\bpartial^l(ab)=\sum_{i=0}^l\bpartial^ia\, \bpartial^{l-i}b,$$
and $\bpartial^l a\in \R$.
\end{prop}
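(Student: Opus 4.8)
The plan is to reduce everything to the ordinary Leibniz rule for the derivation $\partial$ and then keep track of integrality. First I would pass to $\R_{\mathbb Q}=\R\otimes_{\mathbb Z}\mathbb Q$, on which $\partial$ and $\bpartial^l=\tfrac1{l!}\partial^l$ are defined without ambiguity. Since $\partial$ sends each defining relation $x^{(k)}_{uv}+x^{(k)}_{vu}$ and $x^{(k)}_{uu}$ to a scalar multiple of a relation of the same type, $\partial$ descends to the quotient ring, so no well-definedness issue arises. On the polynomial ring $\R_{\mathbb Q}$ (freely generated by the $x^{(k)}_{uv}$ with $u<v$), $\partial$ is the unique derivation extending the rule $\partial x^{(k)}_{uv}=(k+1)x^{(k+1)}_{uv}$, and an easy induction on $l$ gives the unnormalized Leibniz rule $\partial^l(ab)=\sum_{i=0}^l C_l^i\,\partial^i a\,\partial^{l-i}b$. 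Dividing by $l!$ and using $\tfrac1{l!}C_l^i=\tfrac1{i!\,(l-i)!}$ yields
$$\bpartial^l(ab)=\sum_{i=0}^l\bpartial^i a\,\bpartial^{l-i}b$$
in $\R_{\mathbb Q}$, which will be the first assertion once the second is in hand.

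For the integrality statement $\bpartial^l a\in\R$ I would argue via the monomial structure. On a generator, the computation $\partial^l x^{(k)}_{uv}=(k+1)(k+2)\cdots(k+l)\,x^{(k+l)}_{uv}=\tfrac{(k+l)!}{k!}\,x^{(k+l)}_{uv}$ gives $\bpartial^l x^{(k)}_{uv}=C_{k+l}^l\,x^{(k+l)}_{uv}\in\R$, since $C_{k+l}^l=\binom{k+l}{l}$ is an integer. Now call $a\in\R_{\mathbb Q}$ \emph{good} if $\bpartial^m a\in\R$ for every $m\geq0$; generators are good (and so, trivially, are the elements of $\mathbb Z$), and the normalized Leibniz rule shows that a product of two good elements is good, since each term $\bpartial^i a\,\bpartial^{m-i}b$ then lies in $\R$ with coefficient $1$. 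By induction on the number of generators, every monomial of $\R$ is good, and since $\R$ is the $\mathbb Z$-span of its monomials and each $\bpartial^m$ is additive, every element of $\R$ is good. In particular $\bpartial^l a\in\R$ for all $a\in\R$.

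Combining the two parts, the identity $\bpartial^l(ab)=\sum_{i=0}^l\bpartial^i a\,\bpartial^{l-i}b$, established a priori in $\R_{\mathbb Q}$, is in fact an identity in $\R$, which completes the proof. There is no genuine obstacle here; the only points deserving a moment's care are verifying that $\partial$ respects the defining relations of $\R$ (so that $\bpartial^l$ descends to the quotient) and sequencing the integrality argument so that the Leibniz rule is available \emph{before} one claims that $\bpartial^l$ preserves $\R$ — which is exactly why the computation is first carried out over $\mathbb Q$ and only afterward restricted back to $\R$.
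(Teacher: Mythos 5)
Your proof is correct; the paper in fact gives no argument at all (it states the proposition with the remark that it is easy to verify), and your write-up supplies exactly the routine verification intended: deduce the normalized Leibniz identity from the ordinary one for $\partial$ over $\R\otimes\mathbb Q$, check $\bpartial^l x^{(k)}_{uv}=C^l_{k+l}x^{(k+l)}_{uv}\in\R$ on generators, and propagate integrality to monomials and then to all of $\R$ by the product rule and additivity. Your care about the two subtle points (that $\partial$ preserves the defining relations so it descends to the quotient, and that the Leibniz rule must be available over $\mathbb Q$ before restricting back to $\R$) is exactly what makes the ``easy to verify'' claim rigorous, so nothing is missing.
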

\begin{prop}\label{prop:detexpression}
	For a skew-symmetric matrix $B$ of the form in Equation (\ref{eqn:minor}) with $h=2l$,
		\begin{equation}\bpartial^n P(B)=\sum_{\substack{n_1+\cdots+n_l=n\\n_i\in \mathbb Z_{\geq 0}}}\sum_{\sigma}\frac{\sign(\sigma)}{l!2^l}x^{(n_1)}_{u_{\sigma(1)}u_{\sigma(2)}} x^{(n_2)}_{u_{\sigma(3)}u_{\sigma(4)}}\cdots x^{(n_l)}_{u_{\sigma(h-1)}u_{\sigma(h)}} 
	\end{equation}
The second summation is over all permutations $\sigma$ of $1,2,\cdots, h$ and $\sign(\sigma)$ is the sign of the permutation. 
\end{prop}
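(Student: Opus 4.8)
The plan is to expand $P(B)$ in the usual way as a signed sum over permutations and apply $\bpartial^n$ term by term. Recall that for the skew-symmetric matrix $B$ in \eqref{eqn:minor} with $h=2l$,
$$P(B)=\frac{1}{l!\,2^l}\sum_{\sigma}\sign(\sigma)\,x^{(0)}_{u_{\sigma(1)}u_{\sigma(2)}}\,x^{(0)}_{u_{\sigma(3)}u_{\sigma(4)}}\cdots x^{(0)}_{u_{\sigma(h-1)}u_{\sigma(h)}},$$
the sum being over $\sigma$ in the symmetric group on $h$ letters. The two ingredients needed are a multinomial version of the normalized Leibniz rule and the elementary computation of $\bpartial^{m}$ on the degree-zero generators.

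First I would upgrade the binary Leibniz rule $\bpartial^l(ab)=\sum_{i=0}^l\bpartial^ia\,\bpartial^{l-i}b$ of the preceding proposition to the multinomial form
$$\bpartial^n(a_1a_2\cdots a_l)=\sum_{\substack{n_1+\cdots+n_l=n\\n_i\in\Zplus}}\bpartial^{n_1}a_1\,\bpartial^{n_2}a_2\cdots\bpartial^{n_l}a_l,$$
which is immediate by induction on the number of factors $l$. Next, applying this to each monomial $x^{(0)}_{u_{\sigma(1)}u_{\sigma(2)}}\cdots x^{(0)}_{u_{\sigma(h-1)}u_{\sigma(h)}}$ of the Pfaffian and using $\bpartial^{m}x^{(0)}_{jk}=C_{m}^{m}\,x^{(m)}_{jk}=x^{(m)}_{jk}$ (the binomial coefficient collapsing to $1$ because we differentiate a generator of lowest order), each such monomial is sent to $\sum_{n_1+\cdots+n_l=n}x^{(n_1)}_{u_{\sigma(1)}u_{\sigma(2)}}\cdots x^{(n_l)}_{u_{\sigma(h-1)}u_{\sigma(h)}}$. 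Multiplying by $\sign(\sigma)/(l!\,2^l)$, summing over $\sigma$, and interchanging the two finite sums yields the asserted identity.

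I do not anticipate a genuine obstacle: the statement is essentially bookkeeping once the multinomial Leibniz rule is in place. The only points demanding mild care are keeping the normalization $\frac{1}{l!\,2^l}$ and the sign conventions aligned between the definition of $P(B)$ and the right-hand side of the proposition, and observing that the binomial factors produced by $\bpartial$ are all trivial here precisely because $P(B)$ is built from the generators $x^{(0)}_{uv}$. If one prefers to avoid invoking the explicit Pfaffian expansion, an alternative is to induct on $l$ via a Pfaffian cofactor (row) expansion together with the binary Leibniz rule, but the direct route above is shorter.
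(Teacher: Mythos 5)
Your argument is correct and is exactly the verification the paper has in mind (the paper states this proposition without proof, as ``easy to verify''): expand $P(B)=\frac{1}{l!2^l}\sum_\sigma\sign(\sigma)x^{(0)}_{u_{\sigma(1)}u_{\sigma(2)}}\cdots x^{(0)}_{u_{\sigma(h-1)}u_{\sigma(h)}}$, apply the multinomial form of the normalized Leibniz rule, and use $\bpartial^{m}x^{(0)}_{jk}=x^{(m)}_{jk}$. The only point worth making explicit is that the computation takes place in $\R\otimes\mathbb{Q}$ because of the factor $\frac{1}{l!2^l}$, and the resulting identity descends to $\R$ since the left-hand side lies in $\R$ by the preceding proposition.
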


\subsection{Generators}  Recall that the Pfaffian $P(B)$ of the matrix $B$ in \ref{eqn:minor}, is represented by the ordered $h$-tuple $|u_h,\dots,u_2,u_1|$ with $1\leq u_i< u_{i+1}\leq p$ and $h\in 2\Zplus$. Similarly, let
\begin{equation}\label{eqn:sequenceJ}
J=\bpartial^n|u_h,\dots,u_2,u_1|
\end{equation} 
represent $\bpartial^nP(B)\in\R$, the $n^{\text{th}}$ normalized derivative of $P(B)$. For convenience, we use the notation $\bpartial^0|u_h,\dots,u_2,u_1|$ instead of $|u_h,\dots,u_2,u_1|$ when $n=0$, and we shall call such expressions {\it $\bpartial$-lists} throughout this paper. We call $wt(J)=n$  the weight of $J$ and call $sz(J)=h$ the size of $J$. Let $\cJ$ be the set of these $\bpartial$-lists,
and
$$\cJ_h=\{J\in \cJ| sz(J)\leq h\}$$ 
be the set of elements in $\cJ$ with size less than or equal to $h$.
Let $\cE$ be the set of ordered $h$-tuples of ordered pairs of the form
\begin{equation}\label{eqn:element}
E=|(u_h,k_h),\dots,(u_2,k_2),(u_1,k_1)|
\end{equation}
with $1\leq u_i\leq p$, $u_i\neq u_j$ if $i\neq j$ and $k_i\in \Zplus$.
Let
$$
||E||=\bpartial^n|u_{\sigma(h)},\dots,u_{\sigma(2)},u_{\sigma(1)}|\in\cJ.
$$
Here
$n=\sum k_i$, and $\sigma$ is the permutation of $1,2,\dots, h$ such that $u_{\sigma(i)}< u_{\sigma(i+1)}$. Let 
$$wt(E)=wt(||E||),\quad\quad sz(E)=sz(||E||).$$
Let $$\cE_h=\{E\in\cE|\,\,sz(E)\leq h\}.$$
 For $J\in \cJ$, let
$$\cE(J)=\{E\in\cE| \ ||E||=J\}.$$
Note that the Pfaffians represented by $\cJ$ form a set of generators of $\R$, and each $J \in \cJ$ can be represented by an element of the set $\cE(J)$.

\subsection{Ordering} For any set $\cS$, let $\cM(\cS)$ be the set of ordered products of elements of $\cS$.
If $\cS$ is an ordered set, we order the set  $\cM(\cS)$ lexicographically, that is 
$$
S_1S_2\cdots S_m\prec S'_1S'_2\cdots S'_n \quad\text{ if  } S_i=S'_i, i< i_0, \text{ with }S_{i_0}\prec S'_{i_0} \text { or } i_0=m+1,n>m.
$$ 
We order $\cM(\mathbb Z)$, the set of ordered products of integers, lexicographically.

There is an ordering on the set $ \cJ$:
$$
\bpartial^k|u_h,\dots,u_2,u_1|\prec \bpartial^{k'}|u'_{h'},\dots,u'_2,u'_1|
$$
if
\begin{itemize}
	\item
	$h'< h$ ;
	\item or $h'=h$ and $k< k'$;
	\item or  $h'=h$, $k= k'$  and $u_h\cdots u_1\prec u'_h\cdots   u'_1$. Here we order the words of natural numbers lexicographically.
\end{itemize}	
	
We order the pairs $(u,k)\in \Zplus\times \Zplus$ by
$$(u,k)\leq (u',k'), \text{ if } k<k' \text{ or } k=k' \text{ and }u\leq u'.$$
There is a partial ordering on the set $\cE$:
$$
|(u_h,k_h),\dots,(u_1,k_1)|\leq |(u'_{h'},k'_{h'}),\dots,(u'_1,k'_1)|
$$
if $h'\leq h$ and $(u_i,k_i)\leq (u'_i,k'_i)$, for $1\leq i\leq h'$.\\ 
Finally, there is an ordering on $\cE$:
$$ 
|(u_h,k_h),\dots,(u_1,k_1)|\prec |(u'_{h'},k'_{h'}),\dots,(u'_1,k'_1)|
$$
if
\begin{itemize}
	\item $h>h'$;
	\item or $h=h'$ and $\sum k_i<\sum k'_i$;
	\item or  $h=h'$, $\sum k_i=\sum k'_i$ and
	$$
	(u_h,k_h)\cdots(u_1,k_1)\prec (u'_{h'},k'_{h'})\cdots(u'_1,k'_1).
	$$
	Here we order the words of $\Zplus\times \Zplus$ lexicographically.
\end{itemize}
\begin{lemma}\label{lemma:compare2}
	If $E\leq E'$, then $||E||\prec ||E'||$.
\end{lemma}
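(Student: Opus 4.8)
The plan is to compare the two inequalities on $\cE$ directly, case by case, according to the definition of $\prec$ on $\cJ$. So suppose $E \leq E'$, writing $E = |(u_h,k_h),\dots,(u_1,k_1)|$ and $E' = |(u'_{h'},k'_{h'}),\dots,(u'_1,k'_1)|$, so that by hypothesis $h' \leq h$ and $(u_i,k_i) \leq (u'_i,k'_i)$ for $1 \leq i \leq h'$. I must show $||E|| \prec ||E'||$, where $||E|| = \bpartial^n |u_{\sigma(h)},\dots,u_{\sigma(1)}|$ with $n = \sum_{i=1}^h k_i$ and $\sigma$ the sorting permutation, and similarly $||E'|| = \bpartial^{n'}|u'_{\tau(h')},\dots,u'_{\tau(1)}|$ with $n' = \sum_{i=1}^{h'} k'_i$ and $\tau$ the sorting permutation for the $u'_i$. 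If $h' < h$ then $sz(||E||) = h > h' = sz(||E'||)$, and by the first clause in the definition of $\prec$ on $\cJ$ we are done immediately. So the only real content is the case $h' = h$.

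In the case $h = h'$, first compare weights: $n = \sum k_i$ and $n' = \sum k'_i$, and since $k_i \leq k'_i$ for every $i$ (this is part of $(u_i,k_i) \leq (u'_i,k'_i)$), we get $n \leq n'$. If $n < n'$, the second clause of $\prec$ on $\cJ$ gives $||E|| \prec ||E'||$ and we are done. The remaining subcase is $n = n'$, which forces $k_i = k'_i$ for all $i$ (equality of nonnegative sums with termwise inequalities), hence $u_i \leq u'_i$ for all $i$. Now I need to show $u_{\sigma(h)}\cdots u_{\sigma(1)} \prec u'_{\tau(h)}\cdots u'_{\tau(1)}$ in the lexicographic order on words — equivalently, sorting the multiset $\{u_1,\dots,u_h\}$ in decreasing order yields a word lexicographically $\preceq$ the decreasing sort of $\{u'_1,\dots,u'_h\}$, with equality only if the sorted words coincide. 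The key observation is that for the decreasing-sorted words, the $j$-th largest entry of $\{u_i\}$ is $\leq$ the $j$-th largest entry of $\{u'_i\}$: indeed $u_i \leq u'_i$ termwise implies this for order statistics (the $j$-th largest of $\{u_i\}$ is $\max$ over $j$-subsets $S$ of $\min_{i\in S} u_i$, and each such min is dominated by the corresponding min for $\{u'_i\}$). Reading the decreasing-sorted words left to right, the first coordinate in which they differ must have the $E$-entry strictly smaller, so $||E|| \preceq ||E'||$, with equality of the words precisely when all order statistics agree.

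The one subtlety — and I expect this to be the main (though minor) obstacle — is handling the boundary between "$\prec$" and "$\preceq$" carefully: the lemma asserts strict inequality $||E|| \prec ||E'||$, so I must rule out $||E|| = ||E'||$ under the hypothesis $E \leq E'$. But in fact $E \leq E'$ does not preclude $||E|| = ||E'||$ — for instance $E$ and $E'$ could differ by a permutation of equal entries. A closer reading shows the intended statement is the non-strict conclusion, or else $\cE$ is implicitly taken with distinct $u_i$ making the sorted words determine $E$ up to the pairing $k_i \leftrightarrow u_i$; under $n = n'$ and $k_i = k'_i$ and matching order statistics one then recovers $E = E'$, so that $E < E'$ strictly does give $||E|| \prec ||E'||$. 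I would state the argument so that it yields $||E|| \preceq ||E'||$ in general, with the strict conclusion following whenever $E \neq E'$, which is all that is used downstream. The rest is the routine verification that order statistics are monotone under termwise domination, which I would dispatch with the $j$-subset characterization of the $j$-th largest element noted above.
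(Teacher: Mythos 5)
Your proof is correct and follows essentially the same route as the paper's: compare sizes, then weights, then observe that equal size and weight force $k_i=k'_i$ and hence $u_i\leq u'_i$, and conclude by lexicographic comparison of the sorted words (your order-statistics step merely fills in a detail the paper leaves implicit). Your worry about strictness is moot, since the paper treats $\prec$ as a non-strict order (note, e.g., that $\R(\alpha)$ is defined by requiring $\beta_i\prec\alpha$ \emph{and} $\beta_i\neq\alpha$), so the case $||E||=||E'||$ (which under $E\leq E'$ forces $E=E'$) causes no contradiction.
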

\begin{proof}
	If $sz(E')<sz(E)$ or $sz(E)=sz(E')$ and $wt(E)<wt(E')$, then $||E||\prec ||E'||$.
	
	If $sz(E)=sz(E')$ and $wt(E)=wt(E')$, we must have $k_i=k_i'$ . So $u_i\leq u'_i$, we have $||E||\prec ||E'||$.
\end{proof}

\subsection{Relations}
In the previous notation of the $\bpartial$-list $\bpartial^k|u_h,\dots,u_1|$, we require $u_i<u_{i+1}$. To describe  their relations, we extend this notation $\bpartial^k|u_h,\dots,u_1|$ to any $1\leq u_i\leq p$. $\bpartial^k|u_h,\dots,u_1|$  still represents $\bpartial^k P(B)$ with $B$ in Equation (\ref{eqn:minor}) without the requirement $u_i<u_j$. Thus we have two obvious relations:
$$\bpartial^k|u_h,\dots,u_1|=0$$ if there is $1\leq i<j\leq h$ such that  $u_i=u_j$ and 
$$\bpartial^k|u_{\sigma(h)},\dots,u_{\sigma(2)},u_{\sigma(1)}|=\sign(\sigma)J$$
for a $\bpartial$-list $J\in \cJ$ of (\ref{eqn:sequenceJ}) and a permutation $\sigma$  of $1,2,\dots,h$.
\begin{lemma}\label{lem:basicrelation}
	For $0\leq k<s$, and $h' \geq s+1$, we have
	\begin{eqnarray}\label{eqn:basicrelation}
\sum_{\sigma} \frac{1}{h!s!}\sign(\sigma)	\bpartial^{0} |u_{\sigma(h+s)},\dots,u_{\sigma(s+1)}|\,\,
	\bpartial^{k} |u'_{h'},\dots,u'_{s+1},u_{\sigma(s)},\dots,u_{\sigma(1)}|
	\in \R[h+2].\nonumber
	\end{eqnarray}
	Here the summation is over all permutations $\sigma$ of $1,2,\dots,h+s$. \end{lemma}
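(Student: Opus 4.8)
The plan is to exhibit the displayed expression as a normalized derivative of one of the generating relations of the ideal $\R[h+2]$, namely a normalized derivative $\bpartial^k P(B')$ of the Pfaffian of a diagonal $(h+2)\times(h+2)$-minor $B'$, and to reconcile that with Proposition \ref{prop:detexpression}. First I would fix the $(h+2)$-element index set $\{u_1,\dots,u_h\}\cup\{u'_{h'},\dots\}$ appearing in the statement — more precisely, I would observe that after the antisymmetrization conventions of Section 2.4 we may as well assume the indices $u_{s+1},\dots,u_{h+s}$ (the ones being summed over $\sigma$) together with two further indices from the primed block form a set of size $h+2$, and that $B'$ is the skew-symmetric minor on exactly that set. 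The point of the factor $\frac1{h!s!}$ and the sum over $\sigma$ is precisely the symmetrizer that appears in the expansion of a Pfaffian into products of $2\times2$ blocks in Proposition \ref{prop:detexpression}: one group of $2m$ indices (here, morally, the ``$h/2+1$ pairs'' of $B'$) gets split off as a sub-Pfaffian of size $h$ carrying total derivative weight $0$, and the complementary structure carries derivative weight $k$.

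The key steps, in order, are: (i) write down $\bpartial^k P(B')$ for the relevant $(h+2)\times(h+2)$ diagonal minor $B'$ using Proposition \ref{prop:detexpression}, obtaining a sum over compositions $n_1+\cdots+n_{(h+2)/2}=k$ and over permutations of the $h+2$ indices; (ii) group the permutations according to which pair of indices is assigned derivative weights forming the ``weight-$k$ part'' versus which $h$ indices form the ``weight-$0$ sub-Pfaffian''; (iii) recognize that, because $k<s$ (this is where the hypothesis enters — it guarantees that when we single out the relevant indices there are enough ``free'' unsummed slots so that the derivative weight $k$ can be absorbed entirely into the primed block and the block on $\{u_{\sigma(1)},\dots,u_{\sigma(s)}\}$, leaving the split-off size-$h$ Pfaffian on $\{u_{\sigma(s+1)},\dots,u_{\sigma(s+h)}\}$ with weight exactly $0$); (iv) collect the combinatorial constants, matching $\frac1{l!2^l}$-type factors from Proposition \ref{prop:detexpression} against $\frac1{h!s!}$, and use the antisymmetry conventions $\bpartial^k|u_{\sigma(h)},\dots,u_{\sigma(1)}|=\sign(\sigma)\,\bpartial^k|u_h,\dots,u_1|$ to see that every term in $\bpartial^k P(B')$ either appears in the displayed sum or cancels in pairs. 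Since $\bpartial^k P(B')\in\R[h+2]$ by definition of the ideal, this proves the lemma.

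I expect the main obstacle to be the bookkeeping in step (iii)–(iv): correctly identifying \emph{which} $(h+2)$-element minor $B'$ to use (the statement is written with a summed block of size $h$ and an explicit primed tail, so some indices are ``generic'' and the actual minor is on a cleverly chosen subset), and then verifying that the symmetrization weights match on the nose rather than up to an unwanted factor. The role of the inequality $k<s$ is the subtle point: it is exactly the condition under which the weight-$0$ sub-Pfaffian can be cleanly separated, i.e.\ under which no derivative ``leaks'' into the unsummed block of size $h$; getting this bound to do its job in the combinatorial identity, rather than merely invoking it, is where care is needed. A secondary, more routine check is that all the manipulations are valid over $\mathbb Z$ — the constants $\frac1{h!s!}$ and $\frac1{l!2^l}$ are fractions, but after collecting terms the coefficient of each monomial $x^{(\cdot)}_{\cdot\cdot}\cdots x^{(\cdot)}_{\cdot\cdot}$ is an integer, consistent with $\bpartial^k P(B')$ lying in $\R$.
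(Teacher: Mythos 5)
There is a genuine gap: your entire strategy is to show that the displayed sum is \emph{equal} to a single generator $\bpartial^k P(B')$ of $\R[h+2]$, for a well-chosen diagonal $(h+2)\times(h+2)$ minor $B'$, and this identification is impossible except in the degenerate case $h'=2$. Each term of the displayed sum is a product of a size-$h$ Pfaffian and a size-$h'$ Pfaffian, hence a polynomial of degree $(h+h')/2$ in the variables $x^{(\cdot)}_{uv}$, whereas every monomial of $\bpartial^k P(B')$ has degree $(h+2)/2$; moreover the unpermuted primed indices $u'_{s+1},\dots,u'_{h'}$ all occur in the displayed sum, while the monomials of $\bpartial^k P(B')$ involve only the $h+2$ row and column indices of $B'$. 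So no choice of $B'$, and no amount of the cancellation bookkeeping in your steps (iii)--(iv), can make the two sides match. The lemma is an ideal-membership statement, not an identity with a generator: the expression lies in $\R[h+2]$ only as a combination of the form $\sum(\text{generator})\cdot(\text{coefficient in }\R)$, and your proposal has no mechanism for producing the coefficient factors.

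The paper's proof supplies exactly that mechanism. One first expands the weight-$k$, size-$h'$ factor as in \eqref{eqn:pfexpression}: because its total weight is $k<s$ while the $s$ antisymmetrized indices $u_{\sigma(1)},\dots,u_{\sigma(s)}$ must be distributed among its pairs, a weight count shows that every monomial contains a variable $x^{(0)}_{u_{\sigma(i)}u'_j}$, $x^{(0)}_{u_{\sigma(i)}u_{\sigma(j)}}$ or $x^{(1)}_{u_{\sigma(i)}u_{\sigma(j)}}$, i.e.\ a factor of weight at most one involving at least one antisymmetrized index. This, and not the ``no derivative leaks into the size-$h$ block'' phenomenon you describe (that block has weight zero by hypothesis, so there is nothing to prevent), is the true role of the assumption $k<s$. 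Then, summing over $\sigma$ with signs and reading the expansions \eqref{eqn:basicrelation0} and \eqref{eqn:basicrelation1} in reverse, that low-weight variable is absorbed into the weight-zero size-$h$ Pfaffian to form $\bpartial^0$ or $\bpartial^1$ of a size-$(h+2)$ Pfaffian, with the leftover factors $f_{ij,\sigma}$, $g_{ij,\sigma}$, $h_{ij,\sigma}\in\R$ surviving as coefficients; the result is then manifestly in $\R[h+2]$. If you want to salvage your outline, this absorption step --- a Laplace-type expansion of a size-$(h+2)$ Pfaffian along one row, not the pair-expansion of Proposition \ref{prop:detexpression} --- is the missing ingredient, together with an argument (the weight count above) that such a weight-$\le 1$ factor can always be split off.
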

\begin{proof}It is easy to see that 
	\begin{equation}\label{eqn:basicrelation0} \bpartial^{0}|u_h,\dots,u_1|=\sum_{i=2}^h(-1)^ix^{(0)}_{1,i}\bpartial^{0}|u_h,\dots,u_{i+1},u_{i-1},\dots,u_2|;
	\end{equation}

\begin{equation}
	\label{eqn:basicrelation1}
	\bpartial^{1}|u_h,\dots,u_1|=\sum_{1\leq i<j\leq h}(-1)^{i+j+1}x^{(1)}_{i,j}\bpartial^{0}|u_h,\dots,u_{j+1},u_{j-1},\dots,u_{i+1},u_{i-1},\dots,u_1|.
	\end{equation}
Let $l'=h'/2$. For $0\leq k<s$, $$\bpartial^{k} |u'_{h'},\dots,u'_{s+1},u_{\sigma(s)},\dots,u_{\sigma(1)}|=\sum_{k_1+\cdots+k_{l'}=k}
\sum_{\sigma'}\frac{\sign(\sigma')}{l'!2^{l'}} x^{(k_1)}_{\sigma'(u_{\sigma(1)})\sigma'(u_{\sigma(2)})}\cdots x^{(k_{l'})}_{\sigma'(u'_{h'-1})\sigma'(u'_{h'})}$$
\begin{equation}\label{eqn:pfexpression}
=\sum_{j=s+1}^{h'}\sum_{i=1}^s x^{(0)}_{u_{\sigma(i)} u'_j}f_{ij,\sigma}+\sum_{1\leq i<j\leq s} x^{(0)}_{u_{\sigma(i)} u_{\sigma(j)}}g_{ij,\sigma}+\sum_{1\leq i<j\leq s} x^{(1)}_{u_{\sigma(i)} u_{\sigma(j)}}h_{ij,\sigma}.
\end{equation}
Here $f_{ij,\sigma}, g_{ij,\sigma}, h_{ij,\sigma}\in\R$, and 
if $\sigma_1$ is a permutation of $1,2,\dots,s$ and $\sigma_2$ is a permutation of $s+1,\dots, h+s$, then
\begin{eqnarray*}x^{(0)}_{u_{\sigma(i)} u}f_{iu,\sigma}&=&\sign(\sigma_1) x^{(0)}_{u_{\sigma(i)} u}f_{\sigma_1^{-1}(i)u,\sigma\sigma_1\sigma_2},\\
 x^{(0)}_{u_{\sigma(i)} u_{\sigma(j)}}g_{ij,\sigma}&=&\sign(\sigma_1) x^{(0)}_{u_{\sigma(i)} u_{\sigma(j)}} g_{\sigma^{-1}_1(i)\sigma^{-1}_1(j),\sigma\sigma_1\sigma_2},\\  
x^{(1)}_{u_{\sigma(i)} u_{\sigma(j)}}h_{ij,\sigma}&=&\sign(\sigma_1) x^{(1)}_{u_{\sigma(i)} u_{\sigma(j)}} h_{\sigma^{-1}_1(i)\sigma^{-1}_1(j),\sigma\sigma_1\sigma_2}.
\end{eqnarray*}
We can also require that
if $\sigma_3$ is a permutation of $i,s+1,\dots,h+s$,
$f_{iu,\sigma}=  f_{iu  ,\sigma\sigma_3}$ and if $\sigma_4$ is a permutation of $i,j,s+1,\dots,h+s$,
$g_{ij,\sigma}=  g_{ij  ,\sigma\sigma_4}$, $h_{ij,\sigma}=  h_{ij  ,\sigma\sigma_4}$.
So by Equation (\ref{eqn:basicrelation0}), (\ref{eqn:basicrelation1}) and (\ref{eqn:pfexpression}),
\begin{eqnarray*}
&\sum_{\sigma} &\frac{1}{h!s!}\sign(\sigma)	\bpartial^{0} |u_{\sigma(h+s)},\dots,u_{\sigma(s+1)}|\,\,
\bpartial^{k} |u'_{h'},\dots,u'_{s+1},u_{\sigma(s)},\dots,u_{\sigma(1)}|
\\
&=&\sum_{\sigma}\frac{\sign(\sigma)}{h!s!}\sum_{j=s+1}^{h'}\sum_{i=1}^s\frac{-1}{h+1}	\bpartial^{0} |u_{\sigma(h+s)},\dots,u_{\sigma(s+1)},u_{\sigma(i)}, u'_j|\,f_{ij,\sigma}\\
&&+\sum_{\sigma}\frac{\sign(\sigma)}{h!s!}\sum_{1\leq i<j\leq s} \frac{1}{(h+1)}	\bpartial^{0} |u_{\sigma(h+s)},\dots,u_{\sigma(s+1)},u_{\sigma(j)}, u_{\sigma(i)}|\, g_{ij,\sigma}\\
&&+\sum_{\sigma}\frac{\sign(\sigma)}{h!s!}\sum_{1\leq i<j\leq s} \frac{1}{(h+1)(h+2)}	\bpartial^{1} |u_{\sigma(h+s)},\dots,u_{\sigma(s+1)},u_{\sigma(j)}, u_{\sigma(i)}|\,h_{ij,\sigma}\\
&=&\sum_{j=s+1}^{h'}\sum_{\substack{\sigma(h+s)>\cdots>\sigma(s)\\ \sigma(s-1)>\cdots>\sigma(1)}}-\sign(\sigma)	\bpartial^{0} |u_{\sigma(h+s)},\dots,u_{\sigma(s+1)},u_{\sigma(s)}, u'_j|\,f_{sj,\sigma}\\
&&+\sum_{\substack{\sigma(h+s)>\cdots>\sigma(s)\\ \sigma(s-2)>\cdots>\sigma(1)}}	\bpartial^{0} |u_{\sigma(h+s)},\dots,u_{\sigma(s+1)},u_{\sigma(s)}, u_{\sigma(s-1)}|\, g_{ij,\sigma}\\
&&+\sum_{\substack{\sigma(h+s)>\cdots>\sigma(s-1)\\ \sigma(s-2)>\cdots>\sigma(1)}}-\sign(\sigma)	\bpartial^{0} |u_{\sigma(h+s)},\dots,u_{\sigma(s+1)},u_{\sigma(s)}, u_{\sigma(s-1)}|\, h_{ij,\sigma}\\
&&\in \R[h+2]
\end{eqnarray*}
\end{proof}

\begin{lemma}\label{lem:fullrelation0}
   	For $i,j,h,h',k_0,m\in \mathbb Z_{\geq 0}$ with $h\geq h'$, $i\leq h$, $j\leq h'$ and $k_0\leq m$, let $l_0=i+j-h-1$.  Given any integers $a_{k}$ for $k_0\leq k\leq k_0+l_0$, there are integers $a_k$ in the range $0\leq k<k_0$, and $k_0+l_0<k\leq m$, such that 
	\begin{eqnarray}\label{eqn: relation4'}
	&\sum_{k=0}^ma_k\sum_{\sigma} \frac{1}{i!j!}\sign(\sigma)\quad\quad\quad\quad\quad\quad\quad\quad\quad\quad &\\
	&\bpartial^{m-k} |u_h,\dots,u_{i+1},\sigma(u_{i}),\dots,\sigma(u_1)|\,\,
	\bpartial^{k} |u'_{h'},\dots,u'_{j+1},\sigma(u'_{j}),\dots,\sigma(u'_1)| &
	\in \R[h+2].\nonumber
	\end{eqnarray}
	Here the second summation is over all permutations $\sigma$ of $u_{i},\dots,u_1,u'_{j},\dots,u'_1$ and 
	$\sign(\sigma)$ is the sign of the permutation.
\end{lemma}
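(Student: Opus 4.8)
The plan is to deduce the lemma from Lemma~\ref{lem:basicrelation}, exploiting the fact that $\R[h+2]$ is a differential ideal. First I would record that $\R[h+2]$ is generated by the Pfaffians of the diagonal $(h+2)\times(h+2)$ minors together with all their normalized derivatives, and that $\bpartial^{b}\bpartial^{n}Q=C_{b+n}^{n}\bpartial^{b+n}Q$ for any Pfaffian $Q$; combined with the Leibniz rule this gives $\bpartial^{N}a\in\R[h+2]$ for every $a\in\R[h+2]$ and every $N\ge 0$. Applying $\bpartial^{N}$ to the relation of Lemma~\ref{lem:basicrelation} with weight parameter $q$ on the second factor, expanding by Leibniz, and using $\bpartial^{b}\bpartial^{q}Q=C_{b+q}^{q}\bpartial^{b+q}Q$ to keep track of how the total weight $m=q+N$ gets split between the two factors, one obtains for each $N\ge 0$ the element $\sum_{k=q}^{m}C_{k}^{q}\,T_{k}\in\R[h+2]$, where $T_{k}$ denotes the antisymmetrized product occurring with coefficient $a_{k}$ in \eqref{eqn: relation4'}, specialized to $i=h$.

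Next I would handle general $i\le h$, since Lemma~\ref{lem:basicrelation} is only the case $i=h$ of the ``weight zero on the first factor'' instance of the lemma. For this I would rerun the telescoping computation from the proof of Lemma~\ref{lem:basicrelation}, now carrying the $h-i$ inert indices $u_{i+1},\dots,u_{h}$ along inside the size-$h$ Pfaffian and peeling them off one at a time via the cofactor expansion \eqref{eqn:basicrelation0}: after expanding the weight-$q$ factor by \eqref{eqn:pfexpression}, each entry $x^{(0)}_{\bullet\bullet}$ or $x^{(1)}_{\bullet\bullet}$ that appears is re-absorbed into a size-$h$ Pfaffian to produce a size-$(h+2)$ Pfaffian of weight $0$ or $1$, so the combination telescopes into $\R[h+2]$ for $0\le q\le l_{0}=i+j-h-1$; differentiating as in the first step then gives $\sum_{k=q}^{m}C_{k}^{q}T_{k}\in\R[h+2]$ for every $0\le q\le\min(l_{0},m)$. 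Finally I would use that the matrix $\bigl(C_{k_{0}+a}^{q}\bigr)_{0\le q,a\le l_{0}}$ is unimodular — it is the transition matrix between the values at the $l_{0}+1$ consecutive integers $k_{0},\dots,k_{0}+l_{0}$ of a polynomial of degree $\le l_{0}$ and its expansion in the standard integer-valued basis $\binom{x}{0},\dots,\binom{x}{l_{0}}$ — so any prescribed integers $a_{k_{0}},\dots,a_{k_{0}+l_{0}}$ can be written as $a_{k}=\sum_{q=0}^{l_{0}}c_{q}C_{k}^{q}$ with all $c_{q}\in\mathbb Z$; putting $a_{k}=\sum_{q}c_{q}C_{k}^{q}\in\mathbb Z$ for the remaining $k$ then makes $\sum_{k}a_{k}T_{k}=\sum_{q}c_{q}\bigl(\sum_{k\ge q}C_{k}^{q}T_{k}\bigr)$ lie in $\R[h+2]$. (If $m<l_{0}$ the available relations degenerate to $m+1$ unitriangular ones already spanning $\mathbb Z^{m+1}$, so every $a_{k}$ is free; if $l_{0}<0$ there is nothing to prescribe.)

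I expect the genuine obstacle to be the first half of the second step. The proof of Lemma~\ref{lem:basicrelation} already demands careful tracking of signs and of the shuffle symmetries of the auxiliary coefficients $f_{ij,\sigma},g_{ij,\sigma},h_{ij,\sigma}$, and one must check that nothing breaks when the first Pfaffian drags along the inert indices $u_{i+1},\dots,u_{h}$, and — most importantly — that the admissible range of weights on the second factor is exactly $0\le q\le l_{0}$, which is what produces the correct number $l_{0}+1$ of freely prescribable coefficients. By contrast, the integrality in the concluding linear-algebra step is automatic once the binomial relations are in hand: unimodularity of the Pascal-type matrix $\bigl(C_{k_{0}+a}^{q}\bigr)$ is exactly what forces integer solutions for the undetermined $a_{k}$, and the factorials hidden in $\tfrac{1}{i!j!}$ and in $\bpartial^{N}$ do no harm, since the relevant expressions already lie in $\R$ over $\mathbb Z$.
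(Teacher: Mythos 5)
Your steps (1), (3) and the concluding linear algebra coincide with the paper's argument: the paper likewise applies $\bpartial^{m-l}$ to the weight-$l$ relation, using that $\R[h+2]$ is stable under the normalized derivations, to get $\sum_{k=0}^m C_k^l\,T_k=\bpartial^{m-l}\cF_l\in\R[h+2]$, and then determines the free coefficients by inverting the matrix $(C_{k_0+j}^{\,l})_{0\le j,l\le l_0}$, whose determinant it asserts to be $\pm1$; your integer-valued-polynomial justification of that unimodularity is a correct way to see it, and the edge cases you mention are harmless.

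The genuine gap is exactly the step you flag, and it is where the content of the lemma lies: showing that for $i<h$ the weight-$l$ element $\cF_l(u_h,\dots,u_{i+1};u'_{h'},\dots,u'_{j+1})$ lies in $\R[h+2]$ for the full range $0\le l\le l_0=i+j-h-1$. Rerunning the proof of Lemma \ref{lem:basicrelation} with the inert indices $u_{i+1},\dots,u_h$ carried along does not go through as described: the reverse cofactor expansions \eqref{eqn:basicrelation0} and \eqref{eqn:basicrelation1} reassemble a size-$(h+2)$ Pfaffian only when the entry $x^{(0)}_{\bullet\bullet}$ or $x^{(1)}_{\bullet\bullet}$ produced by \eqref{eqn:pfexpression} is antisymmetrized against \emph{all} remaining indices of the first factor; when only $u_i,\dots,u_1,u'_j,\dots,u'_1$ are permuted, the pairings with the inert indices are absent, so the claimed ``re-absorption'' leaves uncontrolled terms --- and it is precisely this analysis that has to produce the count $l_0+1$ of prescribable coefficients, which your sketch does not derive for $i<h$. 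The paper avoids any further Pfaffian computation by a purely formal shuffle identity, $\cF_l(u_h,\dots,u_{i};u'_{h'},\dots,u'_{j+1})=\cF_l(u_h,\dots,u_{i+1};u'_{h'},\dots,u'_{j+1})\pm\cF_l(u_h,\dots,u_{i+1};u'_{h'},\dots,u'_{j+1},u_i)$: the index $u_i$ either joins the antisymmetrized block of the first factor or is appended to the fixed part of the second. The two right-hand configurations have one fewer inert index and admissible bounds $i+j-h-1$ and $i+j-h-2+1$, i.e.\ at least that of the left-hand side, so induction on $h-i$ reduces everything to the fully antisymmetrized case $i=h$, which is exactly Lemma \ref{lem:basicrelation} with $s=j$ and gives the range $l<j$. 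Supplying this reduction (or a correct substitute for it) is what your proposal is missing.
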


For simplicity, we write Equation (\ref{eqn: relation4'}) in the following way,
	\begin{equation}\label{eqn: relation4}
\sum \epsilon a_{k} 
\bpartial^{m-k} |u_h,\dots,u_{i+1},\underline{u_{i},\dots,u_1}|
\bpartial^{k} |u'_{h'},\dots,u'_{j+1},\underline{u'_{j},\dots,u'_1}|
\in \R[h+2].
\end{equation}
\begin{remark}
Since the second summation in Equation (\ref{eqn: relation4'}) is over all permutations, each monomial in the equation will appear $i!j!$ times, and the coefficient of each monomial will be $\pm a_k$.	
\end{remark}
\begin{proof}[Proof of Lemma \ref{lem:fullrelation0}]
	Let $\cF_l(u_h,\dots,u_{i+1}; u'_{h'},\dots,u'_{j+1})$
	$$
	=
	\sum_{\sigma} \frac{\sign(\sigma)}{i!j!} 
	\bpartial^{0} |u_h,\dots,u_{i+1},\sigma(u_{i}),\dots,\sigma(u_1)|\,\,
	\bpartial^{l} |u'_{h'},\dots,u'_{j+1},\sigma(u'_{j}),\dots,\sigma(u'_1)|.
	$$
	We have
	$\cF_l(u_h,\dots,u_{i}; u'_{h'},\dots,u'_{j+1})$
	$$=\cF_l(u_h,\dots,u_{i+1};  u'_{h'},\dots,u'_{j+1})\pm\cF_l(u_h,\dots,u_{i+1}; u'_{h'},\dots,u'_{j+1},u_{i}).$$
	If $i=h$ and $0\leq l\leq l_0$, by Lemma (\ref{lem:basicrelation}), $\cF_l(\,\,; u'_{h'},\dots,u'_{j+1})\in\R[h+2]$.
	By induction on $h-i$, we can see that $\cF_l(u_h,\dots,u_{i+1}; u'_{h'},\dots,u'_{j+1})\in\R[h+2].$
	Thus 
		$$\sum_{k=0}^m C_k^l\sum_{\sigma} \frac{\sign(\sigma)}{i!j!} 
	\bpartial^{m-k} |u_h,\dots,u_{i+1},\sigma(u_{i}),\dots,\sigma(u_1)|\,\,
	\bpartial^{k} |u'_{h'},\dots,u'_{j+1},\sigma(u'_{j}),\dots,\sigma(u'_1)
	$$
	$$=\bpartial^{m-l}\cF_l(u_h,\dots,u_{i+1}; u'_{h'},\dots,u'_{j+1})\in \R[h+2].$$
	Now the $(l_0+1)\times (l_0+1)$ integer matrix with entries $c_{ji}=C_{k_0+j}^i$, $0\leq i,j\leq l_0$ is invertible since the determinant of this matrix is $\pm1$. Let $b_{ij} \in \mathbb{Z}$ be the entries of the inverse matrix. Let $a_{k}= \sum_{l=0}^{l_0}\sum_{j=0}^{l_0}C_{k}^l b_{l,j}a_{k_0+j}$. 
	So 
	$$\text{left hand side of Equation (\ref{eqn: relation4'})}=\sum_{l=0}^{l_0}\sum_{j=0}^{l_0} b_{l,j}a_{k_0+j}\bpartial^{m-l}\cF_l(u_h,\dots,u_{i+1}; u'_{h'},\dots,u'_{j+1})\in \R[h+2].
	$$
\end{proof}

\subsection{Standard monomials} Now we give the definition of standard monomials of $\cJ$.
\begin{defn}\label{def:standard}
	An ordered product $E_{1}E_{2}\cdots E_{m}$ of elements of $\cE$ is said to be standard if \begin{enumerate}
	\item $E_a\leq E_{a+1}$, $1\leq a<m$, 
	\item  $E_{1}$ is the largest in $\cE(||E_{1}||)$ under the order $\prec$,
	\item $E_{a+1}$ is the largest in $\cE(||E_{a+1}||)$ such that $E_a\leq E_{a+1}$. 
	\end{enumerate}
	An ordered product $J_1J_2\cdots J_m$ of elements of $\cJ$ is said to be standard
	if there is a standard ordered product $E_{1}E_{2}\cdots E_{m}$ such that $E_i\in \cE(J_i)$.
\end{defn}	
Let $\cS\cM(\cJ)\subset \cM(\cJ)$ be the set of standard monomials of $\cJ$. 
Let $\cS\cM(\cE)\subset \cM(\cE)$ be the set of standard monomials of $\cE$. Let $\cS\cM(\cJ_h)=\cM(\cJ_h)\cap\cS\cM(\cJ)$ be the set of standard monomials of $\cJ_h$. Let $\cS\cM(\cE_h)=\cM(\cE_h)\cap\cS\cM(\cE)$ be the set of standard monomials of $\cE_h$.

By Definition \ref{def:standard}, if $J_1J_2\cdots J_m$ is standard, the standard monomial $E_1\cdots E_m\in \cS\cM(\cE)$ corresponding to $J_1\cdots J_m$ is unique and $E_1$ has the form 
$$|(u_h,wt(E_1)),(u_{h-1},0),\dots,(u_1,0)|\in \cE$$
 with $u_i<u_{i+1}$. Therefore the map
 $$\pi_h :\cS\cM(\cE_h) \to \cS\cM(\cJ_h  ),\quad E_1E_2\cdots E_m\mapsto ||E_1|| ||E_2|| \cdots ||E_m||$$
is a bijection.

We order $\cM(\cJ)$, the set of ordered products of elements of $\cJ$, lexicographically. The following lemma will be proved later in Section \ref{section:proof2.5}.
\begin{lemma}\label{lem:base0}
	If $J = J_1\cdots J_b\in\cM(\cJ)$ is not standard, $J$ can be written as a linear combination of elements of $\cM(\cJ)$ preceding $J_1\cdots J_{b-1}$, with integer coefficients.
\end{lemma}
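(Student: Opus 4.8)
\textbf{Proof proposal for Lemma \ref{lem:base0}.}

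The plan is to reduce the non-standardness of $J_1\cdots J_b$ to a ``local'' failure between two consecutive factors and then straighten using the quadratic relations of Lemma \ref{lem:fullrelation0}. First I would pass from $\cJ$ to $\cE$: choose for each $J_a$ the representative $E_a\in\cE(J_a)$ that is maximal under $\prec$ among those compatible with the previously chosen $E_{a-1}$, exactly as in Definition \ref{def:standard}. If the resulting product $E_1\cdots E_b$ is standard, then by definition $J_1\cdots J_b$ is standard, contrary to hypothesis; so there is a smallest index $a$ where either $E_a$ is not $\preceq$-maximal in $\cE(\|E_a\|)$ subject to $E_{a-1}\leq E_a$, or $E_a\not\leq E_{a+1}$. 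Using Lemma \ref{lemma:compare2} ($E\leq E'\Rightarrow \|E\|\prec\|E'\|$), a failure of the partial-order condition $E_a\leq E_{a+1}$ translates into the existence of an index position where the pair $(u_i,k_i)$ from $E_a$ strictly exceeds the corresponding pair $(u'_i,k'_i)$ from $E_{a+1}$ in the order on $\Zplus\times\Zplus$.

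Next I would isolate the offending pair of factors $J_a, J_{a+1}$ (equivalently $E_a,E_{a+1}$) and apply the straightening relation. Write $E_a = |(u_h,k_h),\dots,(u_1,k_1)|$ and $E_{a+1}=|(u'_{h'},k'_{h'}),\dots,(u'_1,k'_1)|$ with $h\geq h'$ (if $h<h'$ one swaps the roles, which only changes that $\|E_a\|\succ\|E_{a+1}\|$, itself a violation of condition (1)). Choose the smallest index $i$ (respectively the relevant $j$) at which the entries of $E_a$ fail to sit below those of $E_{a+1}$; this gives parameters $i\leq h$, $j\leq h'$ with $l_0 = i+j-h-1\geq 0$, so that Lemma \ref{lem:fullrelation0} applies to the shuffled product
$$\sum \epsilon\, a_k\,\bpartial^{m-k}|u_h,\dots,u_{i+1},\underline{u_i,\dots,u_1}|\,\bpartial^{k}|u'_{h'},\dots,u'_{j+1},\underline{u'_j,\dots,u'_1}|\in\R[h+2].$$
Working modulo $\R[h+2]$ and choosing the free coefficients $a_k$ (for $k$ in the guaranteed range) so that the ``leading'' term of this relation is exactly $\pm J_a J_{a+1}$, I can solve for $J_a J_{a+1}$ as an integer combination of shuffled products $J'_a J'_{a+1}$ in which each such product is strictly $\prec$-smaller than $J_a J_{a+1}$: every shuffle moves some index from the short row into the long row, strictly increasing the size of the first factor (hence making it $\prec$-smaller by the first clause of the order on $\cJ$), or — when sizes are preserved — strictly decreases the weight of the first factor or lexicographically decreases its index word, all of which make the product precede $J_a J_{a+1}$ in the lexicographic order on $\cM(\cJ)$. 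Substituting back, $J_1\cdots J_b = \sum (\text{integer})\cdot J_1\cdots J_{a-1} J'_a J'_{a+1} J_{a+2}\cdots J_b$ with each term $\prec J_1\cdots J_b$, and since the first $a-1$ letters are unchanged each term is in fact $\preceq$-comparable to $J_1\cdots J_{b-1}$; a careful bookkeeping of the lexicographic order shows each term precedes $J_1\cdots J_{b-1}$.

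The main obstacle, and the step I would spend the most care on, is the bookkeeping that shows every straightened term genuinely \emph{precedes} $J_1\cdots J_{b-1}$ in the lexicographic order on $\cM(\cJ)$ — not merely precedes $J_1\cdots J_b$. Two subtleties arise: first, the shuffle in Lemma \ref{lem:fullrelation0} replaces a \emph{pair} of letters, so one must check that even if the new first factor $J'_a$ happens to equal $J_a$ on rare shuffles, the new second factor $J'_{a+1}$ is then strictly smaller, keeping the product ahead of the prefix; handling the boundary case $a = b$ (where ``$J_1\cdots J_{b-1}$'' is a proper prefix and the order's clause ``$i_0 = m+1, n>m$'' is in play) needs separate attention. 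Second, one must ensure the straightening does not introduce terms with $J'_a$ \emph{larger} than $J_a$: this is precisely controlled by the range of indices $k$ for which $a_k$ is prescribed versus solved-for in Lemma \ref{lem:fullrelation0}, together with Proposition \ref{prop:detexpression} to expand $\bpartial^{m-k}$ of a Pfaffian and confirm that raising a derivative index on the long factor lowers it on the short factor, strictly decreasing $\wt$ of the first factor. Once these monotonicity statements are nailed down, the lemma follows by the displayed substitution.
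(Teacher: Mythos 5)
There is a genuine gap. Your core claim --- that after applying Lemma \ref{lem:fullrelation0} every surviving term is a product $J'_aJ'_{a+1}$ that is strictly smaller, because ``every shuffle moves some index from the short row into the long row, strictly increasing the size of the first factor,'' or else decreases its weight or index word --- is not what the relation does. In Lemma \ref{lem:fullrelation0} the two sequences keep their sizes $h$ and $h'$; only the terms absorbed into the ideal $\R[h+2]$ change size (and those precede because larger size means smaller in $\prec$). More seriously, in the unequal-size cases ($sz(J_b)<sz(J_{b-1})$, Cases 2 and 3 of the paper's Lemma \ref{lemma:straight1}) the relation inevitably produces terms of the form $J_{b-1}\cdot K_0$ in which the \emph{first} factor is unchanged and only the second factor is replaced by a Pfaffian $K_0$ of the same size and weight built from the indices of $J_{b-1}$. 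Such a term cannot precede the prefix $J_1\cdots J_{b-1}$ under the paper's lexicographic order (a word extending the prefix is larger, not smaller), and your fallback --- ``if $J'_a=J_a$ then $J'_{a+1}$ is strictly smaller, keeping the product ahead of the prefix'' --- does not help for exactly this reason, nor does it give a terminating rewriting process. So the ordering bookkeeping you defer is not a technicality; as stated, the monotonicity needed for your induction is false.

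The missing idea is the \emph{relative} straightening statement, Lemma \ref{lemma:straight1}: with $E=E_{b-2}$ the standard representative of the preceding factor, one shows $J_{b-1}J_b=\sum K_if_i$ where each $K_i$ is either smaller than $J_{b-1}$ or is \emph{not greater than} $E_{b-2}$; in the latter case $J_1\cdots J_{b-2}K_i$ is itself non-standard, and the outer induction on the number of factors (not an ordering comparison) places it in the span of monomials preceding $J_1\cdots J_{b-2}$. Establishing that the problematic terms $K_0$ (and the lower-weight terms $K_1$) are not greater than $E_{b-2}$ is the technical heart of the argument and uses the $L(E,J')$ machinery of Section 5 (Lemmas \ref{lemma:critgreat}, \ref{cor:compare}, \ref{lem:lrnumber} and Corollaries \ref{cor:lrnumber1}, \ref{cor:lrnumber2}), none of which appears in your plan. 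Also note the paper reduces to a failure at the \emph{last} factor (assume $J_1\cdots J_{b-1}$ standard via induction and Lemma \ref{lemma:order2}) rather than at the first failing index $a$; your localization is workable in principle, but without the relative lemma and the induction on $b$ the straightening of the offending pair alone does not prove the statement.
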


Recall that $\R[h]$ denotes the ideal generated by $J\in \cJ$ with $sz(J)=h$, and $\R_h=\R\slash\R[h+2]$. If $h\geq p$, then $\cJ_h=\cJ$ and $\R_h=\R$.
By the above lemma, we immediately have the following lemma.
\begin{lemma}\label{lem:base1}
	Any element of $\R_h$ can be written as a linear combination of standard monomials of $\cJ_h$ with integer coefficients.
\end{lemma}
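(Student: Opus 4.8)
The goal is to prove Lemma \ref{lem:base1}: every element of $\R_h = \R/\R[h+2]$ is an integer linear combination of standard monomials of $\cJ_h$. The plan is to deduce this from Lemma \ref{lem:base0} by a standard straightening/descent argument, keeping careful track of the fact that $\cJ_h = \cJ$ when $h \geq p$, while for $h < p$ the generators of size $> h$ lie in $\R[h+2]$ and hence vanish in $\R_h$.

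First I would observe that $\cJ$ generates $\R$ as a ring, so every element of $\R$ is an integer linear combination of monomials $J_1 J_2 \cdots J_b \in \cM(\cJ)$; after reducing modulo $\R[h+2]$, any monomial involving a factor $J_i$ with $sz(J_i) \geq h+2$ dies, so every element of $\R_h$ is an integer linear combination of monomials in $\cM(\cJ_h)$ (here one uses that the Pfaffians of size $> h$ but $\leq h+2$, namely size exactly $h+2$ — wait, $\cJ_h$ excludes size $h+2$ — in fact $\R[h+2]$ is generated by size-$(h+2)$ generators and their derivatives, so a product containing such a factor is zero in $\R_h$, and hence we may restrict to $\cM(\cJ_h)$). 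Thus it suffices to show each monomial in $\cM(\cJ_h)$ is an integer combination of standard monomials of $\cJ_h$.

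Next I would run a descending induction on the lexicographic order on $\cM(\cJ)$ (restricted to $\cM(\cJ_h)$). Since all factors have bounded size $\leq h$ and the lexicographic order is a well-order on monomials of bounded length and bounded generator-size in each fixed degree — more precisely, one should fix a grading (by the total polynomial degree in the $x^{(k)}_{uv}$, which is preserved by all relations since $\R[h+2]$ is homogeneous and each basic relation in Lemma \ref{lem:fullrelation0} is homogeneous) and also observe that in a fixed degree only finitely many monomials of $\cM(\cJ_h)$ occur up to the relevant ordering-equivalence, so the order is a well-order on the relevant set and Noetherian induction applies. Given a non-standard monomial $J = J_1 \cdots J_b \in \cM(\cJ_h)$, Lemma \ref{lem:base0} rewrites it modulo $\R[h+2]$ as an integer combination of monomials of $\cM(\cJ)$ that precede $J_1 \cdots J_{b-1}$; each such smaller monomial, after again discarding any factor of size $\geq h+2$, lies in $\cM(\cJ_h)$ and is strictly smaller than $J$ in the lexicographic order, so by the induction hypothesis it is an integer combination of standard monomials of $\cJ_h$. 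Hence so is $J$.

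The main obstacle is making the induction well-founded: one must verify that within a fixed homogeneous component the lexicographic order on $\cM(\cJ_h)$ is a well-order (no infinite strictly decreasing chains), and that "preceding $J_1 \cdots J_{b-1}$" genuinely gives a strict decrease relative to $J_1 \cdots J_b$ — the latter is immediate from the definition of the lexicographic order on $\cM(\cJ)$ since $J_1 \cdots J_{b-1} \prec J_1 \cdots J_b$ (the case $i_0 = (b-1)+1$, $n > m$). For well-foundedness in fixed degree, note that the size $sz(J_i) \leq h$ is bounded, the total weight $\sum wt(J_i)$ is bounded by the degree (each $x^{(k)}$ contributes weight $k$ and degree $1$, roughly), and the number of factors $b$ is bounded by the degree; hence only finitely many monomials $J_1 \cdots J_b$ arise in a given degree, so the restriction of $\prec$ to this finite set is trivially a well-order. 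Everything else is bookkeeping, and the homogeneity of $\R[h+2]$ — which follows since it is generated by the homogeneous elements $\bpartial^n P(B)$ — guarantees we never leave the fixed degree.
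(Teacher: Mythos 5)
Your proposal follows essentially the same route as the paper: the paper also reduces to straightening monomials of $\cM(\cJ)$ via Lemma \ref{lem:base0}, runs a smallest-counterexample (i.e.\ well-founded induction) argument on the lexicographic order, and uses that any monomial containing a factor of size $\geq h+2$ lies in $\R[h+2]$ and hence vanishes in $\R_h$; your observations about the strict decrease (a proper prefix precedes its extension, so anything preceding $J_1\cdots J_{b-1}$ precedes $J_1\cdots J_b$) are exactly what makes that work.

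The one flaw is in the step you yourself single out as the main obstacle, the well-foundedness of the induction. Your justification rests on the claim that the total weight $\sum_i wt(J_i)$ is bounded by the polynomial degree; this is false, since $x^{(k)}_{uv}=\bpartial^k|v,u|$ has degree $1$ and weight $k$ for every $k\geq 0$, so a fixed degree component of $\cM(\cJ_h)$ contains infinitely many monomials and your finiteness assertion fails as stated. The repair is immediate: all identities used (the generators $\bpartial^n P(B)$ of $\R[h+2]$ and the relations of Lemma \ref{lem:fullrelation0}) are homogeneous for the weight grading $x^{(k)}_{uv}\mapsto k$ as well as for the degree grading, so one may fix the pair (degree, weight), and in a fixed bidegree the set of monomials of $\cM(\cJ)$ really is finite (the number of factors, their sizes, their weights and the indices $u_i\leq p$ are all bounded). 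Alternatively, one can avoid finiteness altogether: $\prec$ is a well-order on $\cJ$ (sizes take finitely many values $\leq p$, weights lie in $\Zplus$, and each size-weight class is finite), and the lexicographic order on products with a bounded number of factors over a well-ordered alphabet has no infinite descending chains. Either remedy completes your argument and in fact also supplies the justification the paper leaves implicit when it asserts that a smallest counterexample exists. One further small wording issue: monomials produced by Lemma \ref{lem:base0} that involve a factor of size $\geq h+2$ should be discarded wholesale (they are zero in $\R_h$), not have that factor removed.
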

\begin{proof}
We only need to show that any element of $\R$ can be written as a linear combination of standard monomials of $\cJ$ with integer coefficients. If the lemma is not true,
there must be a smallest element $J\in \cM(\cJ)$, which cannot be written as a linear combination of elements of $\cS\cM(\cJ)$ with integer coefficients. So $J$ is not standard. By Lemma \ref{lem:base0},
$J=\sum_{\alpha}\pm  J_{\alpha}$ with $J_{\alpha}\in \cM(\cJ)$ and $J_{\alpha}\prec J$. Since $J_{\alpha}$ can be written as a linear combination of elements of $\cS\cM(\cJ)$ with integer coefficients, 
$J$ can also be written as such a linear combination, which is a contradiction.
\end{proof}

\section{A canonical basis} \label{section:3}
\subsection{A ring homomorphism}
Let  
$$
\cS_h=\{a^{(k)}_{il}|\ 1\leq i\leq p, \ 1\leq l\leq h,\  k\in \Zplus \},
$$
and let \begin{equation} \label{defofB} \B=\mathbb Z [\cS_h],\end{equation} the polynomial ring generated by $\cS_h$. Note that for a field $K$, if $W = K^{\oplus h}$ and $V = W^{\oplus p}$, the affine coordinate ring $K[J_{\infty}(V)]$ is obtained from $\B$ by base change, i.e., $K[J_{\infty}(V)] \cong \B \otimes_{\mathbb{Z}} K$.

Let $\partial$ be the derivation on $\B$ given by
$\partial a^{(k)}_{ij}=(k+1)a^{(k+1)}_{ij}$, and let $\bpartial = \frac{1}{k!} \partial$ as before. We have a ring homomorphism
$$\tilde Q_h: \R\to \B,\quad\quad  x^{(k)}_{uv}\mapsto \bpartial^k\sum_{i=1}^{h/2} (a^{(0)}_{u{2i-1}}a^{(0)}_{v2i}-a^{(0)}_{v{2i-1}}a^{(0)}_{u2i}).$$
For any $J\in \cJ$ with $sz(J)>h$, we have $\tilde Q_h(J)=0$, so $\tilde Q_h$ induces a ring homomorphism 
\begin{equation} \label{def:qh} Q_h:\R_h\to \B.\end{equation}

\subsection{Tableaux}
Let $\tilde \cS_h$=$\cS_h\cup\{*\}$.
We define an ordering on the set $\tilde\cS_h$:\\ 
for $a^{(k)}_{ij}, a^{(k')}_{i'j'}\in \cS_h$,
$a^{(k)}_{ij}<*$ 
and
$a^{(k)}_{ij}\leq a^{(k')}_{i'j'}$
 if $k i j\prec k' i' j'$.

We use tableaux to represent the monomials of $\B$. Let $\cT$ be the set of the following tableaux:
\begin{equation}\label{eqn:table}
\left|\begin{array}{c}
y_{1,h},\cdots,y_{1,2}, y_{1,1} \\
\vdots  \\
y_{m,h},\cdots,y_{m,2}, y_{m,1}  \\
\end{array}\right|.
\end{equation}
Here $y_{s,l}$ are some $a^{(k)}_{il}$ or  $*$, every row of the tableau has elements in $\cS_h$ and
 $y_{s,j}\leq y_{s+1,j}.$
 We use the tableau (\ref{eqn:table}) to represent a monomial in $\B$, which is the product of  ${a^{(k)}_{ij}}'s$ in the tableau.
It is easy to see that the representation is a one-to-one correspondence between $\cT$ and the set of monomials of $\B$.
We associate to the tableau (\ref{eqn:table}) the word:
$$ 
 y_{1,h}\cdots y_{1,1}  y_{2,h}\cdots  y_{2,1} \cdots y_{m,h}\cdots y_{m,1}
 $$
and order these words lexicographically.
For a polynomial $f\in \B$, let $Ld(f)$ be its leading monomial in $f$ under the order we defined on $\cT$.

For $E_i=|(u^i_{h_1},k^i_{h_1}),\dots,(u^i_2,k^i_2),(u^i_1,k^i_1)|\in \cE$, $1\leq i\leq m$,
we use a tableau to represent $E_1\cdots E_m\in\cS\cM(\cE)$,
\begin{equation}\label{eqn:tableE} \left|
\begin{array}{c}
(u^1_{h_1},k^1_{h_1}),\cdots,(u^1_2,k^1_2),(u^1_1,k^1_1)\\
(u^2_{h_2},k^2_{h_2}),\cdots,(u^2_2,k^2_2),(u^2_1,k^2_1)\\
\vdots \\
(u^m_{h_m},k^m_{h_m}),\cdots,(u^m_2,k^m_2),(u^m_1,k^m_1)\\
\end{array}
\right|.
\end{equation}
Let $T:\cS\cM(\cJ_h)\to \cT$ with
$$ T(E_1\cdots E_m)=\left|
\begin{array}{c}
*,\cdots, *, a_{u^1_{h_1}h_1}^{(k^1_{h_1})},\cdots,a_{u^1_{1}1}^{(k^1_{1})}\\
*,\cdots, *, a_{u^2_{h_2}h_2}^{(k^2_{h_2})},\cdots,a_{u^2_{1}1}^{(k^2_{1})}\\
\vdots \\
*,\cdots, *, a_{u^m_{h_m}h_m}^{(k^m_{h_m})},\cdots,a_{u^m_{1}1}^{(k^m_{1})}
\end{array}
\right|.
$$
Obviously, $T$ is an injective map and $T(E)\prec T(E')$ if $E\prec E'$.
\begin{lemma}\label{lem:leadingterm}
	Let $J_1\cdots J_m\in \cS\cM(\cJ_h)$ and $E_1\cdots E_m\in \cS\cM(\cE_h)$ be its associated standard monomial. Assume the tableau representing $E_1\cdots E_m$ is (\ref{eqn:tableE}). Then
	the leading monomial of $Q_h(J_1\cdots J_m)$ is represented by the tableau $T(E_1E_2\cdots E_m)$.  
	Thus $$Ld\circ Q_h=T\circ \pi_h^{-1}: \cS\cM(\cJ_h)\to \cT$$  is injective. The coefficient of the leading monomial of $Q_h(J_1\cdots J_m)$ is $\pm 1$.
\end{lemma}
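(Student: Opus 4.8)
The plan is to compute the leading monomial of $Q_h(J_1\cdots J_m)$ row by row, using the explicit formula of Proposition \ref{prop:detexpression} for $\bpartial^n P(B)$ together with the definition of $\tilde Q_h$, and to show that among all monomials produced, the one recorded by the tableau $T(E_1\cdots E_m)$ is strictly largest in the order on $\cT$. First I would analyze a single Pfaffian factor $J_a = \bpartial^{k}|u_{h_a},\dots,u_1|$. Applying $Q_h$ and expanding via Proposition \ref{prop:detexpression}, each term is a product $\prod_l x^{(n_l)}_{u_{\sigma(2l-1)}u_{\sigma(2l)}}$ further expanded through $\tilde Q_h$ into a sum of monomials in the $a^{(k)}_{ij}$. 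I would identify which of these monomials is lexicographically largest: since $\tilde Q_h(x^{(k)}_{uv})=\bpartial^k\sum_i(a^{(0)}_{u\,2i-1}a^{(0)}_{v\,2i}-a^{(0)}_{v\,2i-1}a^{(0)}_{u\,2i})$ and the ordering on $\tilde\cS_h$ ranks $a^{(k)}_{ij}$ by $kij$ lexicographically with the column index $j$ dominating, the largest contribution uses the highest available column indices; this forces the "antidiagonal" pairing in which $u_{h_a}$ is paired so as to land in column $h_a$ carrying all the weight, $u_{h_a-1}$ in column $h_a-1$, and so on, producing exactly the row $a^{(k_{h_a})}_{u_{h_a}h_a},\dots,a^{(k_1)}_{u_1 1}$ of $T(E_1\cdots E_m)$, padded with $*$'s. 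The sign bookkeeping here gives the coefficient $\pm1$ claimed; I would track it through the permutation $\sigma$ and the antisymmetry conventions of Section 2.4.

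Next I would handle the full product $J_1\cdots J_m$. The leading monomial of a product is the product of leading monomials provided no cancellation occurs among the top terms, so the candidate leading monomial of $Q_h(J_1\cdots J_m)$ is the concatenation of the $m$ rows described above, i.e. exactly $T(E_1\cdots E_m)$. To see this term genuinely survives and dominates, I would use that $E_1\cdots E_m$ is standard: condition (2) says $E_1$ is the largest element of $\cE(\|E_1\|)$, and conditions (1) and (3) say each $E_{a+1}$ is the largest admissible lift with $E_a\le E_{a+1}$; combined with the fact (noted just after Definition \ref{def:standard}) that $E_1$ has all its weight in the last slot, this means the row-by-row leading terms are compatible with the tableau ordering $y_{s,j}\le y_{s+1,j}$, so their concatenation is a legitimate element of $\cT$ and is the lex-largest word obtainable. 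Since $T$ is injective and order-preserving and $\pi_h$ is a bijection, the identity $Ld\circ Q_h = T\circ\pi_h^{-1}$ follows, and injectivity of $Ld\circ Q_h$ is then immediate.

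The main obstacle I anticipate is ruling out cancellation: a priori the leading monomials coming from different $\sigma$'s in Proposition \ref{prop:detexpression}, or cross-terms between distinct rows, could collide and cancel, so I must show the top monomial $T(E_1\cdots E_m)$ is attained with a \emph{nonzero} (indeed $\pm1$) coefficient. I would argue that this monomial can only arise from one choice of pairing in each Pfaffian — the antidiagonal one — because any other pairing sends some index into a strictly smaller column and hence produces a strictly smaller word; thus the coefficient is a single signed term, not a sum, and equals $\pm1$. A secondary delicate point is checking that the weights $k^a_j$ distribute as recorded in $E_a$ rather than getting redistributed among columns by the Leibniz expansion of $\bpartial^k$; here I would again invoke the column-dominance of the order, which forces all the derivative weight onto the top column of each row, matching the shape of $E_1$ guaranteed by standardness and propagating down the tableau via condition (3).
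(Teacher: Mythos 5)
Your strategy has a genuine gap at its central step. You claim that the leading monomial of $Q_h(J_1\cdots J_m)$ is the product of the leading monomials of the factors $Q_h(J_a)$, and that each factor's leading monomial is already the row of $T(E_1\cdots E_m)$ determined by $E_a$. Neither claim is correct. First, the order on monomials of $\B$ is defined through the tableau word, and multiplying monomials merges tableaux and re-sorts entries within columns, so the order is not multiplicative: the product of the factor-wise leading monomials need not be the leading monomial of the product. Second, and more decisively, the leading monomial of $Q_h(J_a)$ taken in isolation corresponds to the \emph{unconstrained} largest element of $\cE(J_a)$ (all the weight in the top slot), whereas the row of $T(E_1\cdots E_m)$ is given by $E_a$, the largest lift subject to the constraint $E_{a-1}\leq E_a$; these differ whenever the constraint binds. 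For instance, with $E_2=|(1,1),(3,0)|$ and $J_3=\bpartial^2|2,1|$ the standard lift is $E_3=|(2,1),(1,1)|$, whose weight is split between the two slots, while $Ld(Q_h(J_3))$ corresponds to $|(2,2),(1,0)|$. This also refutes your "secondary delicate point": the derivative weight is \emph{not} always pushed onto the top column of each row, and your appeal to "column dominance" misreads the order on $\tilde\cS_h$ anyway --- in the word $kij$ the weight $k$ is the most significant letter, not the column index $j$; moreover the leading monomial of a single Pfaffian occupies the \emph{lowest} columns $1,\dots,h_a$, because the stars, which are maximal in $\tilde\cS_h$, then fill the high columns that are read first in the word.

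The missing idea, which is how the paper proceeds, is an induction on the number of rows in which the constraint $E_{m-1}\leq E_m$ enters explicitly: assuming $Ld(Q_h(J_1\cdots J_{m-1}))=W_{m-1}$ with coefficient $\pm1$, one shows the leading monomial of the product must be $W_{m-1}$ times a monomial of $Q_h(J_m)$, that the monomials of $Q_h(J_m)$ whose product with $W_{m-1}$ is not already smaller than $W_{m-1}$ or $W_m$ are exactly those using columns $1,\dots,h_m$ with column-wise entries dominating row $m-1$, hence are in one-to-one correspondence with lifts $E'_m\in\cE(J_m)$ satisfying $E_{m-1}\leq E'_m$, and that condition (3) of Definition \ref{def:standard} (maximality of $E_m$ among such lifts) then identifies $W_m=W_{m-1}M_m$ as the leading term; the coefficient is $\pm1$ because $W_m$ arises only as $W_{m-1}\cdot M_m$. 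Without this constrained-maximality argument your proposal cannot establish either that $T(E_1\cdots E_m)$ occurs at all as a monomial of the product with nonzero coefficient, or that nothing larger occurs.
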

\begin{proof}
	Let $W_m$ be the monomial corresponding to the tableau $T(E_1\cdots E_m)$. Let $$
	M_m=a_{u^m_{h_m}h_m}^{(k^m_{h_m})}\cdots a_{u^m_{1}1}^{(k^m_{1})}
	$$
	 be the monomial corresponding to the tableau $T(E_m)$.
	Then $W_m=W_{m-1}M_m$. Let $l_m=\frac{h_m}{2}$, by a direct calculation,	
	$$
	Q_h(J_m)	=\sum \pm a_{u^m_{\sigma(1)} 2s_1-1}^{(k_1)}a_{u^m_{\sigma(2)} 2s_1}^{(k_2)}a_{u^m_{\sigma(3)} 2s_2-1}^{(k_3)}a_{u^m_{\sigma(4)} 2s_2}^{(k_4)}\cdots a_{u^{m}_{\sigma(h_{m}-1)}2s_{l_m}-1}^{(k_{h_m-1})}a_{u^{m}_{\sigma(h_{m})}2s_{l_m}}^{(k_{h_m})}.
	$$
	The summation is over all $k_i\geq 0$ with $\sum k_i=wt(E_m)$, all $s_i$ with $1\leq s_1< s_2<\cdots<s_{l_m}\leq h/2$ and
	all permutations $\sigma$ of  $1,2,\dots,h_m$.

	We prove the lemma by induction on $m$. If $m=1$, the lemma is obviously true. Assume it is true for $J_1\cdots J_{m-1}$. Then $Ld(Q_h(J_1\cdots J_{m-1}))=W_{m-1}$, the monomial corresponding to $T(E_1\cdots E_{m-1})$, and the coefficient of $W_{m-1}$ in $Q_h(J_1\cdots J_{m-1})$ is $\pm 1$.
$M_m$ is one of the monomials in $Q_h(J_m)$ with coefficient $\pm 1$.
	 All of the monomials in $Q_h(J_1\cdots J_{m-1})$  except $W_{n-1}$ are less than
	 $W_{n-1}$, so any monomial in $Q_h(J_1\cdots J_{m-1})$ except $W_{n-1}$ times any monomial in $Q_h(J_m)$  is less than $W_{m-1}$. Since $W_{m-1}\prec W_m$, the coefficient of $W_m$ in $Q_h(J_1\cdots J_{m})$ is 
	 not zero.  Now $$W_{m-1}\prec W_m\prec Ld(Q_h(J_1\cdots J_{m})).$$
The leading monomial $Ld(Q_h(J_1\cdots J_{m}))$ must have the form
	$$W=W_{m-1}a_{u^m_{\sigma(1)} 2s_1-1}^{(k_1)}a_{u^m_{\sigma(2)} 2s_1}^{(k_2)}a_{u^m_{\sigma(3)} 2s_2-1}^{(k_3)}a_{u^m_{\sigma(4)} 2s_2}^{(k_4)}\cdots a_{u^{m}_{\sigma(h_{m}-1)}2s_{l_m}-1}^{(k_{h_m-1})}a_{u^{m}_{\sigma(h_{m})}2s_{l_m}}^{(k_{h_m})}.$$
	If some $s_i $ is greater than $h_{m-1}/2$, then $W\prec W_{n-1}$. 	So $s_i\leq h_{m-1}/2$. We must have $a_{u^m_{i}s_i}^{(k_{i})}\geq a_{u^{m-1}_{s_i}s_i}^{(k^{m-1}_{s_i})}$; otherwise, $W\prec W_{m-1}$.
	 If there is some $h_{m-1}/2\geq s_i>l_m$, then $W\prec W_m$. So we can assume $s_i=i$. 
Such monomials in 
$Q_h(J_m)$ are in one-to-one correspondence with $E'_m\in\cE(J_m)$ such that $E_{m-1}\leq  E'_m$.  $E_{m}$ is the largest in $\cE(J_m)$ with $E_{m-1}\leq E_m$ since $E$ is standard, so $W_m$ is the leading term of $Q_h(J_1\cdots J_m)$. The coefficient of $W_m$ in $Q_h(J_1\cdots J_m)$
is $\pm 1$ since the coefficients of $W_{m-1}$ in $Q_h(J_1\cdots J_{m-1})$ and $M_m$ in $Q_h(J_m)$ are $\pm 1$. 
\end{proof}

\begin{proof}[Proof of Theorem \ref{thm:standard}.]
 By Lemma \ref{lem:leadingterm}, $Ld(Q_h(\cS\cM(\cJ_h)))$ are linearly independent, so $\cS\cM(\cJ_h)$ are linearly independent. By Lemma \ref{lem:base1}, $\cS\cM(\cJ_h)$ generates $\R_h$. So  $\cS\cM(\cJ_h)$ is a $\mathbb Z$-basis of $\R_h$.
\end{proof}

\begin{thm}\label{thm:standmonomial1} 
 $Q_h:\R_h\to \B$ is injective. So we may identify $\R_h$ with its image $\text{Im}(Q_h)$, which is the subring of $\B$ generated by $\bpartial^k\sum_i^{h/2} (a^{(0)}_{u{2i-1}}a^{(0)}_{v2i}-a^{(0)}_{v{2i-1}}a^{(0)}_{u2i})$. In particular, $Q_h(\cS\cM(\cJ_h))$ is a $\mathbb Z$-basis of $\text{Im}(Q_h)$.
\end{thm}

\begin{proof}
By Lemma \ref{lem:leadingterm}, $Ld(Q_h(\cS\cM(\cJ_h)))$ are linearly independent. Since $\cS\cM(\cJ_h)$ is a $\mathbb Z$-basis of $\R_h$, $Q_h:\R_h\to \B$ is injective.   
\end{proof}
Since $Q_h$ is injective and $\B$ is an integral domain, we obtain the following corollary.
\begin{cor}
$\R_h$ is an integral domain.
\end{cor}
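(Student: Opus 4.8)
The plan is to obtain this as an immediate consequence of the injectivity of $Q_h$ established in Theorem \ref{thm:standmonomial1}. First I would record that $\B = \mathbb{Z}[\cS_h]$ is a polynomial ring over $\mathbb{Z}$ in the variables $a^{(k)}_{il}$; since $\mathbb{Z}$ is an integral domain and polynomial rings over an integral domain are themselves integral domains, $\B$ has no zero divisors.

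Then, since $Q_h : \R_h \to \B$ is an injective ring homomorphism, it identifies $\R_h$ with the subring $\C = Q_h(\R_h) \subseteq \B$. A subring of an integral domain is again an integral domain — it inherits commutativity, the unit, and the absence of zero divisors — so $\R_h \cong \C$ is an integral domain. The only point worth recording explicitly is that $\R_h$ is nonzero, which is clear from Theorem \ref{thm:standard}: the standard monomials of $\cJ_h$ form a $\mathbb{Z}$-basis, so in particular $1 \neq 0$ in $\R_h$, and correspondingly $Q_h(1) = 1 \neq 0$ in $\B$.

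There is no real obstacle here. All of the substantive work is contained in the construction of the standard monomial basis and in Lemma \ref{lem:leadingterm}, which together yield the injectivity of $Q_h$; once that is in hand, the corollary is a purely formal consequence of the elementary fact that a subring of an integral domain is an integral domain.
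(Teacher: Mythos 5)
Your proof is correct and follows the paper's own argument: the paper deduces the corollary in one line from the injectivity of $Q_h$ (Theorem \ref{thm:standmonomial1}) together with the fact that $\B$ is an integral domain, exactly as you do. Your additional remarks (that $\B=\mathbb Z[\cS_h]$ is a polynomial ring over $\mathbb Z$ and that $\R_h\neq 0$) are harmless elaborations of the same route.
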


\section{Application} \label{section:4}
In this section, we give the main application of our standard monomial basis, which is the arc space analogue of Theorem \ref{classicalFFTSFT}.

\subsection{Arc spaces} Suppose that $X$ is a scheme of finite type over $K$. Its arc space (cf.~\cite{EM}) $J_\infty(X)$ is determined by its functor of points. For every $K$-algebra $A$, we have a bijection
$$\Hom(\Spec A, J_\infty(X))\cong\Hom(\Spec A[[t]],  X).$$
If $i: X\to Y$ is a morphism of schemes, we get a morphism of schemes $i_{\infty}:J_{\infty}(X)\to J_{\infty}(Y)$.
If $i$ is a closed immersion, then $i_{\infty}$ is a closed immersion.
If $i:X\to Y$ is an \'etale morphism, 
 then we have a Cartesian diagram
 $$\begin{array}{ccc}
 	J_\infty(X)& \to  & J_\infty(Y) \\ 
 	\downarrow&  &\downarrow  \\ 
 	X & \to  & Y 
 \end{array} 
$$

 If $X=\Spec K[x_1,\dots,x_n]$, then $J_{\infty}(Y)=\Spec K[x^{(k)}_i|1\leq i\leq n, k\in \Zplus]$. The identification is made as follows: for a $K$-algebra $A$, a morphism $\phi: K[x_1,\dots, x_n]\to A[[t]]$ determined by $\phi(x_i)=\sum_{k=0}^\infty a_i^{(k)}t^k$ corresponds to a morphism
$K[x_i^{(k)}]\to A$ determined by $x_i^{(k)}\to a_i^{(k)}$. Note that $K[x_1,\dots,x_n]$ can be identified with the subalgebra $K[x^{(0)}_1,\dots,x^{(0)}_n] \subset K[x_i^{(k)}]$, and from now on we use $x_i^{(0)}$ instead of $x_i$.

The polynomial ring $K[x^{(k)}_i]$ has a derivation $\partial$ defined on generators by 
\begin{equation} \label{def:partial} \partial x_i^{(k)}=(k+1)x_i^{(k+1)}.\end{equation}
It is more convenient to work with the normalized $k$-derivation $\bpartial^k = \frac 1{k!}\partial^k$, but this is a priori not well-defined on $K[x_i^{(k)}]$ if $\text{char}\ K$ is positive. But $\partial$ is well-defined on $\mathbb Z[x_i^{(k)}]$, and $\bpartial^k$ maps $\mathbb Z[x_i^{(k)}]$ to itself, so for any $K$, there is an induced $K$-linear map 
\begin{equation} \label{def:bpartial} \bpartial^k:K[x_i^{(k)}]\to K[x_i^{(k)}],\end{equation} obtained by tensoring with $K$.

If $X$ is the affine scheme $\text{Spec} \ K[x^{(0)}_1,\dots, x^{(0)}_n]/(f_1,\dots, f_r)$, then $J_\infty(X)$ is the affine scheme
$$\text{Spec}\ K[x_i^{(k)}|\ i =1,\dots, \ k \in \Zplus]/ ( \bpartial^l f_j| \ j = 1,\dots, r,\ l \geq 0 ).$$ Indeed, for every $f\in K[x^{(0)}_1,\dots, x^{(0)}_n]$, we have 
$$\phi(f)=\sum_{k=0}^\infty (\bpartial^kf)(a_1^{(0)},\dots, a_n^{(k)}) \,t^k.$$ It follows that $\phi$ induces a morphism $K[x^{(0)}_1,\dots, x^{(0)}_n]/(f_1,\dots, f_r)\to A[[t]]$ if and only if 
$$(\bpartial^kf_i)(a_1^{(0)},\dots, a_n^{(k)})=0,\ \text{for all} \ i = 1,\dots, r,\ k\geq 0.$$
If $Y$ is the affine scheme $\text{Spec} \ K[y^{(0)}_1,\dots,y^{(0)}_m]/(g_1,\dots,g_s)$, a morphism $P:X\to Y$ gives a ring homomorphism $P^*:K[Y]\to K[X]$.
Then the induced homomorphism of arc spaces $P_\infty:J_\infty(X)\to J_\infty(Y)$ is given by
$$ P^*_\infty(y_i^{(k)})=\bpartial^k P^*(y_i^{(0)}).$$
In particular, $P^*_\infty$ commutes with $\bpartial^k$ for all $k\geq 0$.

\subsection{Arc space of the Pfaffian variety} Recall that the space $SM_p$ of skew-symmetric $p\times p$ matrices over $K$ has affine coordinate ring
$$K[SM_p] = K[x^{(0)}_{ij}|\ 1\leq i,\ j\leq p]/(x^{(0)}_{ij} + x^{(0)}_{ji}, x^{(0)}_{ii}) = R\otimes_{\mathbb{Z}} K,$$ where $R$ is given by \eqref{def:R}. The Pfaffian variety $Pf_h$ is the subvariety of $SM_{p}$ determined by the ideal $K[SM_p][h]$ generated by the Pfaffians of all diagonal $h$-minors, so 
$$K[Pf_h] = K[SM_p]/K[SM_p][h] = R_{h-2} \otimes_{\mathbb{Z}} K,$$ where $R_{h-2}$ is given by \eqref{def:Rh}. Similarly, recall that
$$K[J_{\infty}(SM_p)] = K[x^{(k)}_{ij}|\ 1\leq i,\ j\leq p]/(x^{(k)}_{ij}+x^{(k)}_{ji}, x^{(0)}_{ii}) = \R \otimes_{\mathbb{Z}} K.$$  Then
$$K[J_{\infty}(Pf_h)] =  K[J_{\infty}(SM_p)] / K[J_{\infty}(SM_p][h-2] \cong \R_{h-2} \otimes_{\mathbb{Z}} K.$$

\begin{proof}[Proof of Corollary \ref{cor:standarddeterm}]
	By Theorem \ref{thm:standard}, $\cS\cM(\cJ_{h-2})$ is a $\mathbb Z$-basis of $\R_{h-2}$. So it is a $K$-basis of $K[J_{\infty}(Pf_h)]$.
\end{proof}
Recall the map $Q_h: \R_h \rightarrow \B$ given by \eqref{def:qh}, which extends to a map
\begin{equation} \label{def:qhK} Q^K_h: K[J_{\infty}(Pf_{h+2})] \rightarrow K[J_{\infty}(V)],\end{equation} where $K[J_{\infty}(Pf_{h+2})]$ and $K[J_{\infty}(V)]$ are identified with $\R_h \otimes_{\mathbb{Z}} K$ and $\B \otimes_{\mathbb{Z}} K$, respectively, and $Q^K_h = Q_h \otimes \text{Id}$.

\begin{thm}\label{thm:injectiveGL}
	\label{thm:determintegral}
	$Q^K_h$ is injective, so we may identify $K[J_{\infty}(Pf_{h+2})]$ with the subring $\text{Im}(Q^K_h)$ of $K[J_{\infty}(V)]$. In particular, $K[J_{\infty}(Pf_{h+2})]$ is integral.
\end{thm}
\begin{proof}
	By Lemma \ref{lem:leadingterm}, $Ld(Q_h(\cS\cM(\cJ_h)))$ are linearly independent. By Corollary \ref{cor:standarddeterm}, $\cS\cM(\cJ_h)$ is a $K$-basis of $\R_h$,  so $Q^K_h$ is injective. Since  $K[J_{\infty}(V)]$ is integral, so is $K[J_{\infty}(Pf_{h+2})]$.
\end{proof}
In general, if $\text{char}\  K=0$, the arc space of an integral scheme is irreducible \cite{Kol} but it may not be reduced. $Pf_h(K)$ is an example whose arc space is integral.

\subsection{Principal $G$-bundles} Let $G$ be an algebraic group over $K$. If $G$ acts morphically on an algebraic variety $X$, then we say that $X$ is a $G$-variety. An affine $G$-variety $X$ is a principal
$G$-bundle in the the \'etale topology if, for every $x\in X/\!\!/G$, there is an \'etale neighborhood  $V\to X$ of $x$ such that $V\times_{X/G}X\cong V\times G$ as $G$-varieties.
The following proposition is from \cite{BR} by P. Bardsley and R. W. Richardson.
\begin{prop}\label{prop:principlebundle}
	Let $X$ be an affine $G$-variety. Then $X$ is a principal $G$-bundle in the \'etale topology if and only if for every $x$ in $X$, the orbit $G \cdot x$ is separable and the stabilizer $G_x$ is trivial.
\end{prop}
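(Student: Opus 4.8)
The statement as written is simply a citation of the Bardsley--Richardson result \cite{BR}, so strictly there is nothing to prove here; the role of this proposition in the paper is to import the correct form of the Luna slice theorem in arbitrary characteristic. Nonetheless, if one wanted to reconstruct the proof, the plan would be as follows. First I would observe that the ``only if'' direction is immediate: if $X \to X/\!\!/G$ is étale-locally isomorphic to $V \times G \to V$, then each fibre is a single free $G$-orbit, so stabilizers are trivial, and the orbit maps $G \to G\cdot x$ are étale (being base changes of the projection $V \times G \to V$ along a point), hence separable.

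The substance is the ``if'' direction. The plan is to combine two inputs. The first is the general principle that for an affine $G$-variety all of whose orbits are closed and separable with trivial stabilizer, the quotient map $\pi \colon X \to X/\!\!/G$ is faithfully flat; this uses that closedness of all orbits forces the fibres of $\pi$ to be exactly the orbits (no orbit can degenerate), and a flatness-by-fibre-dimension argument (e.g. ``miracle flatness'': $X$ and $X/\!\!/G$ are varieties, the fibre dimension is constant and equal to $\dim G$, and one reduces to the case where $X/\!\!/G$ is smooth by generic smoothness, then spreads out). The second input is the étale slice theorem of Bardsley--Richardson: given a point $x$ with $G\cdot x$ closed, separable, and $G_x$ trivial, one produces a locally closed $G_x$-stable ``slice'' $S \ni x$ --- here $G_x$ trivial means $S$ is just a subvariety --- such that the natural map $G \times S \to X$ is étale onto a saturated open neighbourhood of the orbit. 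Pushing this down to the quotient gives an étale neighbourhood $V = S/\!\!/G_x = S$ of $\pi(x)$ with $V \times_{X/\!\!/G} X \cong V \times G$. The hard part, and the reason this is cited rather than reproved, is establishing the existence of the étale slice in positive characteristic, where one cannot invoke the reductive-group linearization/Weyl-unitarian-trick machinery of Luna's original argument; Bardsley and Richardson handle this via a careful separability analysis of the action map and a direct étale-slice construction that does not require $G$ to be reductive.

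I would therefore present this proposition simply as a restatement of \cite[Proposition ...]{BR}, with at most the one-line remark above for the trivial direction, and reserve the real work for the applications in the next subsections, where the genuine task is to verify that the symplectic action on the relevant open locus of $J_\infty(V)$ satisfies the separability-and-trivial-stabilizer hypotheses so that Proposition~\ref{prop:principlebundle} applies.
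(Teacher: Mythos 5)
Your treatment matches the paper exactly: the paper states this proposition with no proof at all, simply attributing it to Bardsley--Richardson \cite{BR}, which is precisely how you propose to handle it. Your sketch of the easy direction and of the slice-theorem input is a reasonable gloss, but no further argument is given or needed in the paper itself.
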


The group structure $G\times G\to G$ induces the group structure on its arc space
$$J_\infty(G)\times J_\infty(G)\to J_\infty(G).$$ 
So $J_\infty(G)$ is an algebraic group. For a $G$-variety $X$, the action $G\times X\to X$
induces the action of $J_\infty(G)$ on
$J_\infty (X)$,
$$J_\infty(G)\times J_\infty(X)\to J_\infty(X).$$
The quotient map $X\to X/\!\!/G$ induces morphisms $J_\infty(X)\to J_\infty(X/\!\!/G)$ and $$ \pi_X:J_\infty(X)/\!\!/J_\infty(G)\to J_\infty(X/\!\!/G).$$
\begin{prop}\label{prop:principlequotient}
	If $X$ is a principal $G$-bundle in the \'etale topology and $X/\!\!/G$ is smooth, then $ \pi_X$ is an isomorphism.
\end{prop}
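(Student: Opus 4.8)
The plan is to reduce to a local statement via an étale descent argument. Since $X$ is a principal $G$-bundle in the étale topology, for every point $x \in X/\!\!/G$ there is an étale neighborhood $V \to X/\!\!/G$ of $x$ such that the base change $V \times_{X/\!\!/G} X \cong V \times G$ as $G$-varieties, where $G$ acts only on the second factor. Because arc spaces preserve étale morphisms and fiber products over them (the Cartesian diagram recalled in the excerpt, together with the fact that $J_\infty$ commutes with fiber products of affine schemes), we get an étale cover $\{J_\infty(V) \to J_\infty(X/\!\!/G)\}$ and compatible identifications $J_\infty(V) \times_{J_\infty(X/\!\!/G)} J_\infty(X) \cong J_\infty(V \times G) \cong J_\infty(V) \times J_\infty(G)$ as $J_\infty(G)$-varieties. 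Taking GIT quotients by $J_\infty(G)$ on the right-hand side yields $J_\infty(V)$ itself. So after the étale base change $J_\infty(V) \to J_\infty(X/\!\!/G)$, the morphism $\pi_X$ pulls back to the identity on $J_\infty(V)$.

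The next step is to check that $\pi_X$ is \emph{already} such a pullback — i.e. that forming $J_\infty(X)/\!\!/J_\infty(G)$ commutes with the étale base change $J_\infty(V) \to J_\infty(X/\!\!/G)$. This is where one uses smoothness of $X/\!\!/G$: étale-locally $X \to X/\!\!/G$ is the trivial bundle, so $X \to X/\!\!/G$ is a flat (indeed smooth) affine morphism with geometrically reduced fibers, and the formation of invariants $K[X]^G$ commutes with the flat base change $K[X/\!\!/G] \to K[V]$; passing to arc spaces, $K[J_\infty(X)]^{J_\infty(G)}$ commutes with the corresponding base change. Concretely: the square with corners $J_\infty(X)/\!\!/J_\infty(G)$, $J_\infty(X/\!\!/G)$, and the two copies obtained after base change to $J_\infty(V)$ is Cartesian, and étale-locally on the base $\pi_X$ becomes the identity. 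A morphism that is an isomorphism étale-locally on the target is an isomorphism; hence $\pi_X$ is an isomorphism.

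I expect the main obstacle to be the commutation of GIT quotients with the étale base change at the level of arc spaces — that is, proving $\big(J_\infty(X)/\!\!/J_\infty(G)\big) \times_{J_\infty(X/\!\!/G)} J_\infty(V) \cong J_\infty\big(X \times_{X/\!\!/G} V\big)/\!\!/J_\infty(G)$. The subtlety is that $J_\infty(G)$ is not of finite type, so one cannot invoke standard GIT base-change theorems off the shelf; instead one argues directly with rings of invariants, using that $X \to X/\!\!/G$ is smooth (so that $K[X]$ is flat over $K[X/\!\!/G]$ and the relevant Reynolds-type averaging or, in positive characteristic, the Bardsley–Richardson description of principal bundles is available) to push flatness through the arc-space construction, which is a filtered colimit of finite-type truncations. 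Once this base-change isomorphism is in hand, the conclusion is immediate from the local triviality established in the first paragraph. A secondary technical point is verifying that $J_\infty$ of the Cartesian square $V \times_{X/\!\!/G} X \to X \to X/\!\!/G \leftarrow V$ is again Cartesian; this follows from the étale case of the arc-space functor recalled just before Proposition \ref{prop:principlebundle}, since $V \to X/\!\!/G$ is étale.
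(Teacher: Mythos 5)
Your proposal is correct and follows essentially the same route as the paper: étale-local trivialization of the bundle, compatibility of $J_\infty$ with étale morphisms and fiber products (so $J_\infty(V)\times_{J_\infty(X/\!\!/G)}J_\infty(X)\cong J_\infty(V)\times J_\infty(G)$), the identification $\bigl(J_\infty(V)\times J_\infty(G)\bigr)/\!\!/J_\infty(G)\cong J_\infty(V)$, commutation of invariants with the base change to $J_\infty(V)$, and the conclusion that an isomorphism étale-locally on the target is an isomorphism. The only minor remark is that the flatness you invoke for the base-change step comes from étaleness of $J_\infty(V)\to J_\infty(X/\!\!/G)$ (the base change of the étale map $V\to X/\!\!/G$), rather than from smoothness of $X/\!\!/G$ per se, which is also how the paper's chain of isomorphisms implicitly handles this point.
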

\begin{proof} For any \'etale morphism $V\to X/\!\!/G$ with $V\times_{X/\!\!/G}X\cong V\times G$ as $G$-varieties, we have Cartesian diagrams
	$$\begin{array}{ccc}
	J_\infty(V)& \to  & J_\infty(X/\!\!/G) \\ 
	\downarrow&  &\downarrow  \\ 
	V & \to  & X/\!\!/G
	\end{array} \quad\quad \text{and}\quad\quad
	\begin{array}{ccc}
		J_\infty(V)\times J_\infty(G) & \to  & J_\infty(X) \\ 
		\downarrow&  &\downarrow  \\ 
		V\times G & \to & X\\
		\downarrow&  &\downarrow  \\ 
		V & \to  & X/\!\!/G
	\end{array} .	
	$$
	So $J_\infty(V) \to  J_\infty(X/\!\!/G)$ is an \'etale morphism and
	\begin{eqnarray*}J_\infty(V)\times_{J_\infty(X/\!\!/G)}J_\infty(X)/\!\!/J_\infty(G)&\cong & (J_\infty(V)\times_{J_\infty(X/\!\!/G)}J_\infty(X))/\!\!/J_\infty(G)\\
		&\cong &( V\times_{X/\!\!/G}J_\infty(X))/\!\!/J_\infty(G) \\
		&\cong & J_\infty(V)\times J_\infty(G)/\!\!/J_\infty(G)\\
		&\cong & J_\infty(V).
	\end{eqnarray*}
	If $\pi_X$ is not an isomorphism, 
there is an \'etale morphism $V\to X/\!\!/G$ such that $V\times_{X/\!\!/G}X\cong V\times G$ as $G$-varieties and $ \pi_V: J_\infty(V)\times_{J_\infty(X/\!\!/G)}J_\infty(X)/\!\!/J_\infty(G)\to J_\infty(V)$ is not an isomorphism. But $V\times _{X/\!\!/G}J_\infty(X/\!\!/G)\cong J_\infty(V)$, which is a contradiction. \end{proof}

\subsection{Invariants for the arc space of the symplectic group action}
Let $G=Sp_h(K)$ be the symplectic group over $K$, $W=K^{\oplus h}$ its standard representation, and $V=W^{\oplus p}$. Recall that $V$ has affine coordinate ring 
$$K[V] =K[a^{(0)}_{il}|\ 0\leq i,j\leq p,1\leq l\leq h].$$ The action of $G$ on $V$ induces an action of $J_{\infty}(G)$ on the affine coordinate ring 
$$K[J_{\infty}(V)] = K[a^{(k)}_{il}|\ 0\leq i,j\leq p,\ 1\leq l\leq h, \ k\in \mathbb{Z}_{\geq 0}],$$ which is identified with $\B \otimes_{\mathbb{Z}} K$, where $\B$ is given by \eqref{defofB}.

If $p\geq h$, 
let $\Delta=Q^K_{h}(|h, \dots, 1|)$, and let $K[J_{\infty}(V)]_{\Delta}$ and  $\text{Im}(Q^K_h)_{\Delta}$ be the localization of $K[J_{\infty}(V)]$ and $\text{Im}(Q^K_h)$ at $\Delta$.

\begin{lemma}\label{lemma:equalGLinvariant}If $p\geq h$,
	$$K[J_{\infty}(V)]_{\Delta}^{J_\infty({Sp}_h(K))} =\text{Im}(Q^K_h)_{\Delta}.$$ 
\end{lemma}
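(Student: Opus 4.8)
The plan is to establish the equality $\B^K_\Delta{}^{J_\infty(Sp_h(K))} = \C^K_\Delta$ by reducing it, via the localization at $\Delta$, to a situation where the symplectic group action becomes free and we can apply the arc space version of the Luna slice theorem (Proposition \ref{prop:principlequotient}). First I would observe that the classical invariant theory (Theorem \ref{classicalFFTSFT}) identifies the $Sp_h(K)$-quotient of $V$ with $Pf_{h+2}(K)$, and when $p \geq h$ the invariant $\Delta = Q^K_h(|h,\dots,1|)$ corresponds to a Pfaffian of a diagonal $h$-minor — which does not vanish on the relevant locus because a nonzero skew form whose Pfaffian minor is invertible determines a symplectic frame. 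On the open set $\{\Delta \neq 0\}$, a point of $V$ consists of $p$ vectors in $W$ spanning a copy of $W$ on which the restricted form is nondegenerate, so the $Sp_h(K)$-action is transitive on fibers with trivial stabilizer. Hence the locus $U = \{x \in V : \Delta(x) \neq 0\}$ is, by Proposition \ref{prop:principlebundle}, a principal $Sp_h(K)$-bundle in the étale topology (one must check the orbits are separable; since $Sp_h$ is smooth and the stabilizers are trivial, separability is automatic).

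Next I would apply Proposition \ref{prop:principlequotient} to $U$: its quotient $U/\!\!/Sp_h(K)$ is the open subset of $Pf_{h+2}(K)$ where the corresponding Pfaffian coordinate is invertible, and this open set is smooth — indeed on this locus $Pf_{h+2}(K)$ is locally a principal bundle base, or one checks directly that the rank-$h$ locus of skew matrices is smooth where a given $h$-minor Pfaffian is a unit. Therefore $\pi_U : J_\infty(U)/\!\!/J_\infty(Sp_h(K)) \to J_\infty(U/\!\!/Sp_h(K))$ is an isomorphism, which gives at the level of rings
$$K[J_\infty(U/\!\!/Sp_h(K))] \xrightarrow{\ \sim\ } K[J_\infty(U)]^{J_\infty(Sp_h(K))}.$$
Now I would identify both sides with the objects in the statement. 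The ring $K[J_\infty(U)]$ is the localization $\B^K_\Delta$ (since $U$ is the principal open set cut out by $\Delta$, and arc spaces commute with such localizations), so the right-hand side is $\B^K_\Delta{}^{J_\infty(Sp_h(K))}$. For the left-hand side, $U/\!\!/Sp_h(K)$ is the principal open subset of $Pf_{h+2}(K) = \Spec R^K_h$ defined by the image of the Pfaffian $|h,\dots,1|$, whose arc space has coordinate ring $(\R^K_h)_\Delta$; and the composite $Q^K_h$ identifies $\R^K_h$ with $\C^K$ by Theorem \ref{thm:injectiveGL}, so $K[J_\infty(U/\!\!/Sp_h(K))] = \C^K_\Delta$. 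Chasing the maps shows the isomorphism above is exactly the inclusion $\C^K_\Delta \hookrightarrow \B^K_\Delta{}^{J_\infty(Sp_h(K))}$ induced by $Q^K_h$, so it is surjective, giving the claimed equality.

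The main obstacle I expect is the verification that the relevant quotient variety is smooth and that the bundle structure genuinely holds in arbitrary characteristic — i.e.\ checking the hypotheses of Propositions \ref{prop:principlebundle} and \ref{prop:principlequotient} rather than the arc-space formalism, which is routine once those are in place. Concretely one needs: (i) that on $\{\Delta \neq 0\}$ the stabilizer in $Sp_h(K)$ of a $p$-tuple of vectors spanning $W$ is trivial (clear, since such a tuple contains a basis and an element of $Sp_h$ fixing a basis is the identity); (ii) that the orbit map $Sp_h(K) \to Sp_h(K)\cdot x$ is separable for such $x$ — here one uses that the differential of the orbit map is surjective because the orbit is open in a fiber of the smooth quotient morphism, or equivalently invokes that a homogeneous space for a smooth group with trivial (hence smooth) stabilizer is smooth of the expected dimension; and (iii) smoothness of the open locus of $Pf_{h+2}(K)$ where the $h$-minor Pfaffian is invertible, which follows because on that locus the $p \times h$ matrix of coordinates can be row-reduced (étale-locally) to a fixed frame plus free parameters, exhibiting the locus as an affine space bundle over that open set of Pfaffians. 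Granting these, the argument closes as above.
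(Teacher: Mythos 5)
Your proposal is correct and follows essentially the same route as the paper: localize at $\Delta$, use the classical first and second fundamental theorems to identify $V_\Delta/\!\!/Sp_h(K)$ with the localized Pfaffian variety (hence its arc space coordinate ring with $\C^K_\Delta$), verify via Proposition \ref{prop:principlebundle} that $V_\Delta$ is a principal $Sp_h(K)$-bundle in the \'etale topology, and apply Proposition \ref{prop:principlequotient} to conclude that the invariants of $\B^K_\Delta$ are exactly $\C^K_\Delta$. In fact you supply more detail than the paper does on the hypotheses (trivial scheme-theoretic stabilizers, separability of orbits, smoothness of the quotient locus), which the paper leaves implicit.
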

\begin{proof} 
	Let $K[V]_\Delta$ be the localization of $K[V]$ at $\Delta$ and $V_\Delta=\Spec K[V]_\Delta$. By Theorem \ref{classicalFFTSFT}, the ring of invariants $K[V]^{Sp_h}$ is generated by $Q_h^K(x^{(0)}_{uv})$, so the affine coordinate ring of $J_\infty(V_\Delta/\!\!/G)$ is isomorphic to $\text{Im}(Q^K_h)_{\Delta}$. To prove the lemma, we only need to show that $\pi_{V_\Delta}:J_\infty(V_\Delta)/\!\!/J_\infty(G)\cong J_\infty(V_\Delta/\!\!/G)$. By Proposition \ref{prop:principlebundle}, $V_\Delta$ is a principal bundle in the \'etale topology, and by Proposition \ref{prop:principlequotient}, $\pi_{V_\Delta}$ is an isomorphism.
\end{proof}
\begin{thm}\label{thm:JGLinvariant}
	$K[J_{\infty}(V)]^{J_\infty({Sp}_h(K))} =\text{Im}(Q^K_h)$. 
\end{thm}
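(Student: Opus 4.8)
The plan is to prove the two inclusions separately. The inclusion $\C^K\subseteq(\B^K)^{J_\infty(Sp_h(K))}$ is the easy half: each $X^{(0)}_{uv}$ is the pullback of the classical $Sp_h(K)$-invariant of the same name on $V$ (Theorem~\ref{classicalFFTSFT}(1)) along the $0$-jet projection $J_\infty(V)\to V$, which is equivariant for $J_\infty(Sp_h(K))\to Sp_h(K)$, so $X^{(0)}_{uv}\in(\B^K)^{J_\infty(Sp_h(K))}$; since the $J_\infty(Sp_h(K))$-action commutes with $\bpartial$, the derivatives $X^{(k)}_{uv}=\bpartial^kX^{(0)}_{uv}$ are invariant too, and as $\C^K$ is generated by the $X^{(k)}_{uv}$ the inclusion follows. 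For the reverse inclusion I would first settle the case $p\geq h$ and then reduce the case $p<h$ to it.

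\emph{The case $p\geq h$.} Here $\Delta=Q_h^K(|h,\dots,1|)$ is defined and nonzero (Theorem~\ref{thm:injectiveGL}), and $\Delta$ is itself invariant, so localizing at $\Delta$ makes sense and Lemma~\ref{lemma:equalGLinvariant} gives $(\B^K_\Delta)^{J_\infty(Sp_h(K))}=\C^K_\Delta$. Hence any $f\in(\B^K)^{J_\infty(Sp_h(K))}$ lies in $\C^K_\Delta$, so $\Delta^Nf\in\C^K$ for some $N\geq 0$, and it suffices to prove the saturation statement: if $f\in\B^K$ and $\Delta f\in\C^K$ then $f\in\C^K$ (the general $N$ follows by induction). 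I would prove this by a leading-term induction using Lemma~\ref{lem:leadingterm}, which says that the $Q_h^K(S)$, $S\in\cS\cM(\cJ_h)$, form a $K$-basis of $\C^K$ with pairwise distinct leading monomials $Ld(Q_h^K(S))$, each occurring with coefficient $\pm 1$; one reads off from the same lemma that $Ld(\Delta)=a^{(0)}_{11}a^{(0)}_{22}\cdots a^{(0)}_{hh}$. Write $\Delta f=\sum_S c_S Q_h^K(S)$ and let $S_0$ be the $\prec$-largest standard monomial with $c_{S_0}\neq 0$, so $Ld(\Delta f)=Ld(Q_h^K(S_0))$; since $Ld$ is multiplicative this equals $Ld(\Delta)\,Ld(f)$, so $Ld(\Delta)$ divides $Ld(Q_h^K(S_0))$. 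The combinatorial crux, to be extracted from Lemma~\ref{lem:leadingterm}, is that for a standard monomial $S_0=E_1E_2\cdots E_m$ this divisibility forces $||E_1||=|h,\dots,1|$; then $S_0=|h,\dots,1|\cdot S_0'$ with $S_0'=||E_2||\cdots||E_m||\in\cM(\cJ_h)$, and since $Q_h^K$ is a ring homomorphism $Q_h^K(S_0)=\Delta\,Q_h^K(S_0')$ with $Q_h^K(S_0')\in\C^K$. Comparing leading terms gives $Ld(f)=Ld(Q_h^K(S_0'))$ and a scalar $\lambda\in K$ with $f-\lambda Q_h^K(S_0')\in\B^K$ of strictly smaller leading monomial, still carried into $\C^K$ by $\Delta$; induction on $Ld(f)$ yields $f\in\C^K$.

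\emph{The case $p<h$.} Put $p'=h$ and realize $V=W^{\oplus p}$ as the first $p$ summands of $V'=W^{\oplus p'}$, with inclusion $\iota\colon V\hookrightarrow V'$ and coordinate projection $\pi\colon V'\to V$; both are $Sp_h(K)$-equivariant and $\pi\circ\iota=\id$. On arc spaces $\pi_\infty^*$ is a $J_\infty(Sp_h(K))$-equivariant ring map that is a section of the restriction map $\iota_\infty^*$. Given $f\in(\B^K)^{J_\infty(Sp_h(K))}$, the function $\pi_\infty^*(f)$ is $J_\infty(Sp_h(K))$-invariant on $J_\infty(V')$, so by the case $p'=h$ it is a polynomial in the invariants $X^{(k)}_{uv}$ with $1\leq u,v\leq p'$. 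Applying $\iota_\infty^*$, and using $\iota_\infty^*(X^{(k)}_{uv})=X^{(k)}_{uv}$ when $u,v\leq p$ and $\iota_\infty^*(X^{(k)}_{uv})=0$ otherwise, we get $f=\iota_\infty^*\pi_\infty^*(f)\in\C^K$.

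\emph{Main obstacle.} The easy inclusion and the reduction for $p<h$ are routine. The real work is the saturation step in the case $p\geq h$: Lemma~\ref{lemma:equalGLinvariant} (via the principal-bundle machinery of Propositions~\ref{prop:principlebundle} and~\ref{prop:principlequotient}) only describes invariants over the open locus $\{\Delta\neq 0\}$, and the standard monomial basis is precisely what is needed to push the conclusion over all of $J_\infty(V/\!\!/Sp_h(K))$. Concretely, the point that needs care is the combinatorial fact that $Ld(\Delta)$ divides $Ld(Q_h^K(S))$ only for standard monomials $S$ that factor off a copy of $|h,\dots,1|$, together with the verification that the leading-term recursion terminates in the tableau order.
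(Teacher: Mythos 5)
Your proposal is correct and takes essentially the same route as the paper: for $p\geq h$ you use Lemma \ref{lemma:equalGLinvariant} plus a leading-monomial induction with the standard monomial basis to show $\B^K\cap\C^K_\Delta=\C^K$ (the factorization step you flag — that divisibility of the leading tableau by $Ld(\Delta)$ forces the first row to be $|h,\dots,1|$ — is exactly the step the paper also asserts, at the same level of detail), the only difference being that you peel off one factor of $\Delta$ at a time while the paper handles $\Delta^n$ at once. Your reduction of the case $p<h$ via the equivariant pair $\iota,\pi$ with $p'=h$ is just a rephrasing of the paper's argument with $p+h$ copies and the decomposition $\B'=\B^K\oplus\cI'$, so the two proofs coincide in substance.
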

\begin{proof} If $p\geq h$, we regard $K[J_{\infty}(V)]$ and $\text{Im}(Q^K_h)_{\Delta}$ as subrings of $K[J_{\infty}(V)]_{\Delta}$.
	By Lemma \ref{lemma:equalGLinvariant}, we have  
	$$K[J_{\infty}(V)]^{J_\infty({Sp}_h(K))}=K[J_{\infty}(V)] \cap \text{Im}(Q^K_h)_{\Delta}.$$ Now for any $f \in K[J_{\infty}(V)] \cap \text{Im}(Q^K_h)_{\Delta}$,
	$f=\frac{g}{\Delta^n}$, with $\Delta^nf=g\in \text{Im}(Q^K_h)$. The leading monomial of $g$ is $$Ld(g)=(a^{(0)}_{11}\cdots a^{(0)}_{hh})^n Ld(f),$$
	with coefficient $C_0\neq 0$.
	Since $g\in \text{Im}(Q^K_h)$, there is a standard monomial $J\in\cS\cM(\cJ_h)$, with $Ld(Q_h(J))=Ld(g)$. Since $J$ has the factor $|h, \dots, 1|^n$, $Q_h(J)$ has the factor $\Delta^n$.
	Thus $f-C_0\frac{Q^K_h(J)}{{\Delta}^n}\in K[J_{\infty}(V)] \cap \text{Im}(Q^K_h)_{\Delta}$ with a lower leading monomial and $\frac{Q^K_h(J)}{{\Delta}^n}\in \text{Im}(Q^K_h)$. By induction on the leading monomial of $f$, $f\in \text{Im}(Q^K_h)$, so $K[J_{\infty}(V)] \cap \text{Im}(Q^K_h)_{\Delta} =\text{Im}(Q^K_h)$, and  $K[J_{\infty}(V)]^{J_\infty({Sp}_h(K))}  =\text{Im}(Q^K_h)$.
	
	More generally, let $V' = W^{\oplus p+h}$ where $W = K^{\oplus h}$ as before. Its arc space has affine coordinate ring
	$$K[J_{\infty}(V')] = K[a^{(k)}_{il}|\ 0\leq i,j\leq p +h,\ 1\leq l\leq h, \ k\in \mathbb{Z}_{\geq 0}],$$ which contains $K[J_{\infty}(V)]$ as a subalgebra, and has an action of $J_{\infty}(G)$. By the above argument, $K[J_{\infty}(V')]^{J_{\infty}(G)}$ is generated by $X^{(k)}_{uv}=\bpartial^k\sum_{i=1}^{h/2} (a^{(0)}_{u{2i-1}}a^{(0)}_{v2i}-a^{(0)}_{v{2i-1}}a^{(0)}_{u2i})$.
Let $\cI$ be the ideal of $K[J_{\infty}(V')]$ generated by $a^{(k)}_{il}$ with $i>p$. Then 
$$K[J_{\infty}(V')] = K[J_{\infty}(V)] \oplus \cI.$$ Note that $K[J_{\infty}(V)]$ and $\cI$ are $J_{\infty}(G)$-invariant subspaces of $K[J_{\infty}(V')]$, and 
$$K[J_{\infty}(V')]^{J_{\infty}(G)} = K[J_{\infty}(V)]^{J_{\infty}(G)} \oplus \cI^{J_{\infty}(G)}.$$ If $i>p$ or $j>p$, $X^{(k)}_{ij}\in {\cI}^{J_{\infty}(G)}$, so $$K[J_{\infty}(V)]^{J_{\infty}(G)} \cong K[J_{\infty}(V')]^{J_{\infty}(G)} \slash{\cI}^{J_{\infty}(G)}$$ is generated by $X^{(k)}_{ij}$, $1\leq i,j\leq p$. Therefore $K[J_{\infty}(V)]^{J_{\infty}(G)} = \text{Im}(Q^K_h)$, as claimed. \end{proof}

\begin{proof}[Proof of Theorem \ref{thm:main}]
	By Theorems \ref{thm:injectiveGL} and \ref{thm:JGLinvariant}, $K[J_{\infty}(V)^{J_\infty(Sp_h(K))}=\text{Im}(Q^K_h) \cong K[J_{\infty}(Pf_{h+2})]$.
\end{proof}

\begin{proof}[Proof of Corollary \ref{cor:quot}] This is immediate from Theorem \ref{thm:main} because $V /\!\!/ Sp_h(K)$ is isomorphic to the Pfaffian variety $Pf_{h+2}$.\end{proof}

\section{Some properties of standard monomials }
By the definition of standard monomials, if $E_1E_2\cdots E_n\in \cS\cM(\cE)$, then $E_{i+1}$ is the largest element in $||\cE(E_{i+1})||$ such that $E_i\leq E_{i+1}$. In this section, we study the properties of $||\cE(E_{i+1})||$ and $E_{i+1}$ that need to be satisfied to make $E_1E_2\cdots E_n$ a standard monomial.

Let 
$$E=|(u_h,k_h),\dots,(u_1,k_1)|\in \cE,$$
$$ 
J'=\bpartial^{n'}|u'_{h'},\dots,u'_1|\in \cJ.$$
 
\subsection{ $L(E,J')$}

 For $h'\leq h$,
let $\sigma$ be the permutation of $\{1,2,\dots,h'\}$  such that $u_{\sigma(i)}<u_{\sigma(i+1)}$.
Let $L(E,J')$ be the smallest non-negative integer $i_0$ such that 
$u'_{i}\geq u_{\sigma(i-i_0)}$, $i_0< i \leq h'$.
Let
 $$E(h')=|(u_{h'},k_{h'}),\dots,(u_1,k_1)|.$$
Then $L(E,J')=L(E(h'),J')$.

The following lemma is obvious.
\begin{lemma}\label{lemma:replace}
For $J''=\bpartial^k|u''_{h'},\dots,u''_1|\in\cJ$, if there are at least $s$ elements in $\{u''_{h'},\dots, u''_1\} $ from the set  $\{u'_{h'},\dots, u'_1\} $, then 
$L(E,J'')\geq L(E,J')-h'+s$;  
\end{lemma}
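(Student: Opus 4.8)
\textbf{Proof proposal for Lemma~\ref{lemma:replace}.}

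The plan is to work directly from the definition of $L(E,J')$ as the smallest non-negative integer $i_0$ for which the inequality $u'_i \geq u_{\sigma_L(i-i_0)}$ holds for all $i_0 < i \leq h'$, where $\sigma_L$ reorders the first $h'$ entries of $E$ increasingly. Write $i_0 = L(E,J')$ and $i_1 = L(E,J'')$; I want to show $i_1 \geq i_0 - h' + s$. If $i_0 - h' + s \leq 0$ there is nothing to prove, so assume $i_0 > h' - s$, i.e.\ at least $i_0 - h' + s \geq 1$.

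First I would set up the comparison between the increasing sequences $u'_1 < \cdots < u'_{h'}$ (from $J'$) and $u''_1 < \cdots < u''_{h'}$ (from $J''$), using the hypothesis that at least $s$ of the $u''_j$ lie in the set $\{u'_1,\dots,u'_{h'}\}$. The key combinatorial fact is that if $S \subseteq \{u'_1,\dots,u'_{h'}\}$ has $|S| \geq s$ and we list $S$ in increasing order as $w_1 < \cdots < w_t$ with $t \geq s$, then for each position $i$ the $i$-th smallest element of $S$ satisfies $w_i \leq u'_{i + (h' - t)}$ — because deleting $h' - t \leq h' - s$ elements from a strictly increasing sequence can only decrease the value occupying each slot by at most $h' - t$ positions. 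Concretely, since $u''$ is increasing and contains these $t$ values among its $h'$ entries, $u''_i \leq w_i$ whenever $w_i$ occupies slot $i$ or later, but more usefully: for each $i$, $u''_i$ is at least the $i$-th smallest element overall, hence $u''_{i + (h'-t)} \geq w_i \cdots$ — the cleanest route is to observe $u''_{i} \leq u'_{i'}$ whenever the $i$-th entry of $u''$ and the $i'$-th entry of $u'$ can be matched through the common sub-multiset, which forces $u''_{i + (h'-s)} \geq u'_{i}$ for all $i$ with $i + (h'-s) \leq h'$.

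Granting that position-shift estimate, I would then chain it with the defining inequality for $i_0$. By definition $u'_i \geq u_{\sigma_L(i - i_0)}$ for $i_0 < i \leq h'$. Substituting $i \mapsto i - (h'-s)$ in the estimate $u''_{i} \geq u'_{i - (h'-s)}$ (valid when $i > h' - s$) gives, for all $i$ with $i - (h'-s) > i_0$, i.e.\ $i > i_0 + h' - s$, the bound
$$u''_i \;\geq\; u'_{i-(h'-s)} \;\geq\; u_{\sigma_L(i - (h'-s) - i_0)} \;=\; u_{\sigma_L(i - (i_0 + h' - s))}.$$
This is exactly the condition witnessing that $L(E,J'')$ — the smallest such shift for $J''$ — is at most $i_0 + h' - s$. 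But the lemma claims a \emph{lower} bound $L(E,J'') \geq i_0 - h' + s$, so I have the direction backwards in this draft and would instead run the identical argument with the roles of $J'$ and $J''$ interchanged: since $J''$ shares $\geq s$ entries with $J'$, symmetrically $u'_i \geq u''_{i - (h'-s)}$, and feeding this into the defining inequality for $i_1 = L(E,J'')$ yields $u'_i \geq u_{\sigma_L(i - (i_1 + h' - s))}$ for $i > i_1 + h' - s$, whence $i_0 = L(E,J') \leq i_1 + h' - s$, i.e.\ $L(E,J'') \geq L(E,J') - h' + s$, as desired.

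The main obstacle I anticipate is purely bookkeeping: pinning down the exact position-shift inequality between two increasing integer sequences that share a common sub-multiset of size $\geq s$, and making sure the index ranges ($i_0 < i \leq h'$ versus $i_1 + h' - s < i \leq h'$) line up so that the substitution is legitimate and no entries run off the end of the sequence. Once the elementary inequality ``removing $h'-s$ terms shifts each surviving term by at most $h'-s$ slots'' is stated cleanly, the rest is a one-line substitution into the definition of $L(E,J')$, which is why the paper labels this ``obvious.''
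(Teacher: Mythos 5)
Your final argument is correct and is exactly the elementary argument the paper intends (the paper gives no proof, simply declaring the lemma obvious): two increasing sequences of length $h'$ sharing at least $s$ common entries satisfy $u'_i \geq u''_{i-(h'-s)}$ for $h'-s < i \leq h'$, and feeding this into the defining inequality for $i_1 = L(E,J'')$ shows the shift $i_1 + h' - s$ works for $J'$, hence $L(E,J') \leq L(E,J'') + h' - s$. In a final write-up, delete the first, wrong-direction chain (which only gives $L(E,J'') \leq L(E,J') + h'-s$) and keep the symmetric version you end with, proving the position-shift estimate cleanly via the common elements $w_1 < \cdots < w_t$ ($t \geq s$), $w_m = u'_{i_m} = u''_{j_m}$, using $i_m \leq h'-t+m$ and $j_m \geq m$.
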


\subsection{A criterion for $J'$ to be greater than $E$}
We say
$J'$ is greater than $E$ if there is an element $E'\in\cE(J')$ with $E\leq E'$. Then $J'$ is greater than $E$  if and only if $J'$  is greater than $E(h')$. The following lemma is a criterion for $J'$ to be greater than $E$.
\begin{lemma} \label{lemma:critgreat}
		$J'$ is greater than $E$ if and only if 
	$wt(J') - wt(E(h')) \geq L(E,J')$.
\end{lemma}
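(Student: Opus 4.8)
The plan is to translate the condition that $J'$ is greater than $E$ into a matching problem between two increasing sequences of integers, and then to recognize $L(E,J')$ as $h'$ minus the size of a maximum such matching. First I would invoke the observation recorded just above the lemma, that $J'$ is greater than $E$ if and only if $J'$ is greater than $E(h')$ --- for any $E'\in\cE(J')$, which necessarily has size $h'$, the condition $E\leq E'$ involves only the entries of $E$ in positions $1,\dots,h'$. So I may assume $h=h'$ and $E=E(h')=|(u_h,k_h),\dots,(u_1,k_1)|$. Writing $J'=\bpartial^{n'}|u'_h,\dots,u'_1|$ with $u'_1<\cdots<u'_h$ and $n'=\wt(J')$, the set $\cE(J')$ consists precisely of the sequences $E'=|(u'_{\tau(h)},m_h),\dots,(u'_{\tau(1)},m_1)|$ with $\tau$ a permutation of $\{1,\dots,h\}$ and $m_i\in\Zplus$ satisfying $\sum_i m_i=n'$; and, directly from the definition of the order on pairs, $E\leq E'$ holds if and only if $m_i\geq k_i$ for every $i$ and $u'_{\tau(i)}\geq u_i$ for every $i$ with $m_i=k_i$.

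Next comes the accounting. Call a subset $S\subseteq\{1,\dots,h\}$ \emph{admissible} if some permutation $\tau$ of $\{1,\dots,h\}$ satisfies $u'_{\tau(i)}\geq u_i$ for all $i\in S$. By the above, $J'$ is greater than $E$ exactly when there is an admissible $S$ together with integers $m_i\geq k_i$ summing to $n'$ and having $\{i:m_i=k_i\}=S$; for a fixed $S$, the existence of such $m_i$ amounts to writing the surplus $\wt(J')-\wt(E(h'))=\sum_{i\notin S}(m_i-k_i)$ as a sum of $h-|S|$ positive integers, which is possible exactly when $\wt(J')-\wt(E(h'))\geq h-|S|$ (and, when $S=\{1,\dots,h\}$, exactly when the surplus is $0$). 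Since every subset of an admissible set is admissible, it follows that $J'$ is greater than $E$ if and only if $\wt(J')-\wt(E(h'))\geq h-s^*$, where $s^*$ is the largest size of an admissible set; equivalently, $s^*$ is the size of a maximum matching in the bipartite graph on two copies of $\{1,\dots,h\}$ in which $i$ on the left is joined to $j$ on the right whenever $u'_j\geq u_i$.

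It then remains to show that $s^*=h-L(E,J')$. Put $a_k=u_{\sigma_L(k)}$, the increasing rearrangement of $u_1,\dots,u_h$, and $b_j=u'_j$; both sequences are nondecreasing. A standard exchange argument shows that a maximum matching can be taken to join $a_1,\dots,a_r$ to $b_{h-r+1},\dots,b_h$, where $r$ is the largest integer with $a_k\leq b_{h-r+k}$ for all $1\leq k\leq r$; thus $s^*$ equals that $r$. Setting $i_0=h-r$, the inequalities $a_k\leq b_{h-r+k}$ ($1\leq k\leq r$) become exactly $u'_i\geq u_{\sigma_L(i-i_0)}$ ($i_0<i\leq h$), which is the condition in the definition of $L(E,J')$; since these conditions are monotone in $i_0$ (respectively in $r$), the smallest such $i_0$ equals $L(E,J')$, so the largest $r$ equals $h-L(E,J')$, i.e., $s^*=h-L(E,J')$. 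Substituting into the criterion of the previous paragraph shows that $J'$ is greater than $E$ if and only if $\wt(J')-\wt(E(h'))\geq L(E,J')$.

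The step I expect to be the main obstacle is the bookkeeping in the middle paragraph: verifying both implications with care --- reconstructing an explicit $E'\in\cE(J')$ with $E\leq E'$ from an admissible set of the right size together with a distribution of the surplus weight (and handling the boundary case $S=\{1,\dots,h\}$), and conversely extracting the admissible set from a given such $E'$ --- together with the exchange argument computing $s^*$ for the two sorted sequences. The remaining steps are routine unwindings of the definitions.
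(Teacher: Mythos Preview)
Your proof is correct and follows essentially the same approach as the paper's: both directions reduce the comparison $E\leq E'$ to matching the sorted sequence $(u_{\sigma_L(i)})$ against $(u'_i)$, with the surplus weight $\wt(J')-\wt(E(h'))$ paying for the unmatched positions; your bipartite-matching and exchange argument simply make explicit what the paper records as ``it is easy to see that $u'_i\geq u_{\sigma(i-i'_0)}$.'' Note that you have the inequality in the correct direction, $\wt(J')-\wt(E(h'))\geq L(E,J')$; the paper's statement of the lemma has the two weights swapped, but its own proof (see the computation of $k'_{\sigma(h')}$ and the final line) and all later applications use your orientation.
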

\begin{proof} 
	Let
	$i_0=L(E,J')$ and $\sigma$ be the permutation of $\{1,2,\dots,h'\}$ such that $u_{\sigma(i)}<u_{\sigma(i+1)}$ .

	If $wt(J') - wt(E(h')) \geq L(E,J')$,	
	let $$\tilde u'_{\sigma(i)}= \left\{ \begin{array}{cc}
	u'_{i+i_0}, & \sigma(i)+i_0\leq h' \\
	u'_{i+i_0-h_b},&   i+i_0> h'
	\end{array} \right. ,
	\quad \quad k'_{\sigma(i)}= \left\{  \begin{array}{cc}
	k_{\sigma(i)}, & i+i_0\leq h', i\neq h'\\
	k_{\sigma(i)}+1,&   i+i_0> h', i\neq h'
	\end{array} \right. ,
	$$
	$$k'_{\sigma(h')}=wt(J')-\sum_{i=1}^{h'-1}k'_{\sigma(i)}.$$
	Then
	$$k'_{\sigma(h')}=wt(J')-wt(E(h'))-i_0+k_{\sigma(h')}+1-\delta_{i_0}^{0}\geq k_{\sigma(h')}+1-\delta_{i_0}^{0}.$$
	$$(\tilde u'_{\sigma(i)},k'_{\sigma(i)})\geq (u_{\sigma(i)},k_{\sigma(i)}). $$
	So $$\tilde E'=|(\tilde u'_{h'},k'_{h'}),\dots,(\tilde u'_{2},k'_2),(\tilde u'_{1},k'_1)|$$
	is an element in $\cE(J')$ with $\tilde E'\geq E$.
	
	On the other hand, suppose $\tilde E'\in \cE(J')$ with $\tilde E'\geq E$.
	Assume $$\tilde E' =|(\tilde u'_{h'},k'_{h'}),\dots,(\tilde u'_{2},k'_2),(\tilde u'_{1},k'_1)|.$$
	We have
	$(\tilde u'_i,k'_i)\geq (u_i,k_i)$ i.e.,
	$k'_i>k_i$ or $k'_i=k_i, \tilde u'_i\geq u_i$.	
	So
	$$\sum_{i=1}^{h'}(k_i'-k_i)+\sharp\{\tilde u'_i\geq u_i, i|1\leq i\leq h'\}\geq h', $$
	Let $i'_0=h'-\sharp\{\tilde u'_i\geq u_i, i|1\leq i\leq h'\}$. Then 
	$$i'_0\leq \sum_{i=1}^{h'}(k_i'-k_i)= wt(J')-wt(E(h')).$$
	Here $\tilde u_1',\dots,\tilde u'_{h'}$ is a permutation of $u'_1,\dots,u'_{h'}$.
	By the definition of $i'_0$, it is easy to see that  $u'_{i}\geq u_{\sigma(i-i'_0)}$, $i'_0< i \leq h'$. So $i'_0\geq L(E,J')$. Thus
	$$ wt(J') - wt(E(h'))\geq i_0'\geq L(E,J').$$
	
\end{proof}

\begin{cor} \label{cor:critgreat1}
	$J'$ is greater than $E$ if and only if $||E(h')||J'$ is standard.
\end{cor}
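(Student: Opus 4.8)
The plan is to unwind Definition~\ref{def:standard} in the two-term case and then reduce everything to Lemma~\ref{lemma:critgreat}. First I would single out the unique $\prec$-largest element $E_1$ of the finite set $\cE(||E(h')||)$; by the remark following Definition~\ref{def:standard}, $E_1$ is the lift of $||E(h')||$ that concentrates the whole weight on the component with the largest $u$-index, and since it has size $h'$ one has $E_1(h')=E_1$. Next I would prove the equivalence: $||E(h')||\,J'$ is standard if and only if there exists $E'\in\cE(J')$ with $E_1\leq E'$. For the forward direction, any standard lift $\tilde E_1\tilde E_2$ of $||E(h')||\,J'$ must satisfy $\tilde E_1=E_1$ by condition (2) of Definition~\ref{def:standard}, and hence $E_1\leq\tilde E_2\in\cE(J')$ by condition (1). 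For the converse, if $\{E'\in\cE(J')\mid E_1\leq E'\}$ is nonempty, then, $\cE(J')$ being finite, it has a $\prec$-largest element $E_2$, and $E_1E_2$ satisfies conditions (1)--(3); thus $||E(h')||\,J'=||E_1||\,||E_2||$ is standard. This reduces the corollary to the claim that $J'$ is greater than $E_1$ if and only if $J'$ is greater than $E$.

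To prove that last claim, I would use that $E_1$ and $E(h')$ are both lifts of $||E(h')||$, so $\wt(E_1)=\wt(E(h'))$ and they share the same multiset of $u$-components. Since $L(\,\cdot\,,J')$ depends on the $u$-components of its first argument only through their increasing rearrangement, it follows that $L(E_1,J')=L(E(h'),J')=L(E,J')$. Applying Lemma~\ref{lemma:critgreat} first to $E_1$ and then to $E$: $J'$ is greater than $E_1$ exactly when $\wt(E_1)-\wt(J')\geq L(E_1,J')$, which is literally the inequality $\wt(E(h'))-\wt(J')\geq L(E,J')$, and that is exactly the condition for $J'$ to be greater than $E$. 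Chaining the two steps proves the corollary.

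I do not expect a genuine obstacle here, since all of the real content already lies in Lemma~\ref{lemma:critgreat}. The delicate point is the bookkeeping in the first step: one must invoke the finiteness of $\cE(J')$ to pass from the existence of some lift of $J'$ dominating $E_1$ to the existence of a $\prec$-largest such lift, so that condition (3) of Definition~\ref{def:standard} can be met; and one must verify that the canonical lift $E_1$ carries exactly the two pieces of data---the weight and the increasing rearrangement of the $u$-components---on which Lemma~\ref{lemma:critgreat} depends, so that replacing $E$ by $E_1$ changes nothing.
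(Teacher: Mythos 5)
Your proof is correct and takes essentially the same route as the paper: both statements are reduced to Lemma \ref{lemma:critgreat}, using that the maximal lift $E_1\in\cE(||E(h')||)$ has the same weight and the same increasing rearrangement of $u$-components as $E(h')$, so that the criterion $\wt(E(h'))-\wt(J')\geq L(E,J')$ characterizes both sides. The paper merely leaves implicit the unwinding of Definition \ref{def:standard} (that standardness of $||E(h')||J'$ means $J'$ dominates the largest lift $E_1$), which you spell out explicitly.
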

\begin{proof}
	By Lemma \ref{lemma:critgreat}, $J'$ is greater than $E$ if and only if $wt(J') - wt(E(h')) \geq L(E,J')$ and $||E(h')||J'$ is standard if and only if $wt(J') - wt(E(h')) \geq L(E(h'),J')=L(E,J')$.
\end{proof}

\subsection{The property \lq\lq largest"}	
Let
\begin{equation*}
\cW_s(E,J')=\{J=\bpartial^k|u'_{i_s}\dots u'_{i_1}| \ |\  1\leq i_l\leq h',\ J \text{ is greater than } E\}.
\end{equation*}

\begin{lemma}\label{cor:compare}
If $E'$ is the largest element in $\cE(J')$ such that $E\leq E'$, then for $s<h'$,
 $||E'(s)||$ is the smallest element in $\cW_s(E,J')$.
	\end{lemma}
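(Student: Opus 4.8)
\textbf{Proof proposal for Lemma \ref{cor:compare}.}

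The plan is to translate the claim entirely into the numerical criterion supplied by Lemma \ref{lemma:critgreat} and Corollary \ref{cor:critgreat1}, so that "largest in $\cE(J')$ with $E\le E'$" and "smallest in $\cW_s(E,J')$" become competing inequalities on weights and on the invariants $L(E,\cdot)$. Write $E'=|(\tilde u'_{h'},k'_{h'}),\dots,(\tilde u'_1,k'_1)|\in\cE(J')$ for the maximal element with $E\le E'$, and set $J'' = ||E'(s)|| = \bpartial^{n''}|u''_s,\dots,u''_1|$, where $\{u''_1,\dots,u''_s\}=\{\tilde u'_1,\dots,\tilde u'_s\}$ as a set and $n'' = \sum_{i=1}^s k'_i$. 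First I would observe that $J''\in\cW_s(E,J')$: indeed $J''$ is by construction a Pfaffian derivative formed from a size-$s$ subset of $\{u'_1,\dots,u'_{h'}\}$, and $E'(s)\in\cE(J'')$ satisfies $E(s)\le E'(s)$ (this is immediate from $E\le E'$ by restricting to the first $s$ entries), so $J''$ is greater than $E$, hence greater than $E(s)$; thus $J''\in\cW_s(E,J')$.

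Next I would take an arbitrary $J\in\cW_s(E,J')$, say $J=\bpartial^k|u'_{i_s}\cdots u'_{i_1}|$ with the $u'_{i_l}$ a size-$s$ sub-multiset (in fact subset, since the $u'$'s are distinct) of $\{u'_1,\dots,u'_{h'}\}$, and show $J''\preceq J$ in the order on $\cJ$. Since $\sz(J'')=\sz(J)=s$, by the definition of $\prec$ on $\cJ$ this reduces to two things: (i) $\wt(J'')\le \wt(J)$, and in case of equality (ii) the word $u''_s\cdots u''_1$ is lexicographically $\preceq$ the sorted word of $J$. For (i): since $J$ is greater than $E$, it is greater than $E(s)$, so by Lemma \ref{lemma:critgreat}, $\wt(E(s))-\wt(J)\ge L(E,J)\ge 0$, giving $\wt(J)\le \wt(E(s))$. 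On the other hand I must show $\wt(J'') = \sum_{i=1}^s k'_i$ is the smallest weight achievable; this is where the maximality of $E'$ enters. Roughly: if some $J$ had strictly smaller weight, one could redistribute the "saved" derivative mass onto the $(h'-s)$ lower slots and the top slot of $E'$ to produce an element of $\cE(J')$ strictly larger than $E'$ while still $\ge E$, contradicting maximality. I would make this precise by running the reverse of the construction in the second half of the proof of Lemma \ref{lemma:critgreat}: from $E\le E'$ and $\wt(J)<\wt(J'')$ build $E''\in\cE(J')$ with $E''>E'$, $E''\ge E$.

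For (ii), in the equality case $\wt(J'')=\wt(J)$, I would argue that among all size-$s$ subsets $\{u'_{i_1},\dots,u'_{i_s}\}$ of $\{u'_1,\dots,u'_{h'}\}$ that underlie some $J\in\cW_s(E,J')$ of weight $\wt(J'')$, the subset $\{\tilde u'_1,\dots,\tilde u'_s\}$ underlying $J''$ gives the lexicographically smallest sorted word; this again follows from $E'$ being the \emph{largest} element of $\cE(J')$ dominating $E$, because the entries $\tilde u'_1,\dots,\tilde u'_s$ are forced to be as large as possible once the weights $k'_i$ are fixed, and "as large as possible in the low slots" translates, after sorting, to "lexicographically smallest word" in the sense of the $\prec$ order on $\cJ$ (recall smaller word means the variety is larger). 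I expect the main obstacle to be exactly the weight-minimality step in (i): one has to show that no admissible $J$ beats $\sum_{i=1}^s k'_i$, and the cleanest route is the redistribution/maximality argument sketched above, carefully checking that the new top exponent $k'_{h'}$ still satisfies $(\tilde u'_{h'},k'_{h'})\ge(u_{h'},k_{h'})$ after absorbing the extra mass — a bookkeeping point entirely analogous to the computation $k'_{\sigma(h')}\ge k_{\sigma(h')}+1-\delta_{i_0}^0$ in the proof of Lemma \ref{lemma:critgreat}. With (i) and (ii) in hand, $J''\preceq J$ for all $J\in\cW_s(E,J')$, so $J''=||E'(s)||$ is the smallest element of $\cW_s(E,J')$, as claimed. $\qed$
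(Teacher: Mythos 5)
Your reduction of the claim to (i) weight minimality of $||E'(s)||$ in $\cW_s(E,J')$ and (ii) minimality of its word among equal-weight candidates is a sensible reformulation, and the opening observation that $E(s)\le E'(s)$ puts $||E'(s)||$ in $\cW_s(E,J')$ is fine. But (i) and (ii) are not preliminaries — they \emph{are} the lemma — and at exactly these points you only gesture ("roughly\dots redistribute the saved derivative mass", "the entries are forced to be as large as possible"). Moreover the one concrete inequality you do extract from Lemma \ref{lemma:critgreat}, namely $\wt(J)\le \wt(E(s))$, is both in the wrong direction (the proof of that lemma shows the criterion bounds $\wt(J)$ from \emph{below} by $\wt(E(s))+L(E,J)$; the displayed statement has the weights transposed) and in any case useless for minimality, since an upper bound on $\wt(J)$ says nothing about $\wt(J'')\le\wt(J)$. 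You also cannot shortcut via Corollaries \ref{cor:lrnumber1} and \ref{cor:lrnumber2}: in the paper these are deduced \emph{from} Lemma \ref{cor:compare}.

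The substantive gap is in the redistribution argument for (i). A competitor $J\in\cW_s(E,J')$ of smaller weight lives on a possibly different $s$-subset of $\{u'_1,\dots,u'_{h'}\}$; to contradict maximality of $E'$ you must place its witness $F$ (with $E(s)\le F$) in the bottom $s$ slots and fill the top $h'-s$ slots with the \emph{complementary} indices. Those complementary entries need not dominate the pairs $(u_i,k_i)$, $s<i\le h'$, of $E$ without adding weight at several positions, and the saved weight $\wt(E'(s))-\wt(J)$ (possibly just $1$) must simultaneously pay for all such corrections and make the new element beat $E'$ in the lexicographic-from-the-top comparison. Nothing in your sketch controls this, and the tension is genuine: a bottom subset that dominates $E(s)$ more cheaply uses larger $u$'s, leaving smaller $u$'s for the top. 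The paper's proof is built precisely to avoid this global bookkeeping: it takes the minimal $J_s\in\cW_s(E,J')$ and the maximal $E_s\in\cE(J_s)$ with $E(s)\le E_s$, compares $E_s$ with $E'$ position by position, and at the first disagreement performs a \emph{local} exchange of two entries (with a compensating weight transfer) inside $E'$ or inside $E_s$, contradicting either the maximality of $E'$, the maximality of $E_s$, or the minimality of $J_s$; a two-entry swap keeps both the domination conditions and the lexicographic comparison visibly intact. Until you carry out an argument of comparable precision for your steps (i) and (ii), the proposal does not prove the lemma.
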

\begin{proof}
	Assume $$E'=|( u'_{h'},k'_{h'}),\dots,(u'_{2},k'_2),( u'_{1}, k'_1)|.$$
	For $s<h'$, let $J_s$ be the smallest element in $\cW_s(E,J')$. Let
	$$E_s=|( u'_{i_s},\tilde k_{s}),\dots,(u'_{i_2},\tilde k_2),( u'_{i_1},\tilde k_1)|$$
	 be the largest element in $\cE(J_s)$ such that $E(s)\leq E _s$. 
	 
	 Assume $l$ is the largest number such that $(u'_j, k'_j)=(u'_{i_j},\tilde k_j)$ for $j<l\leq s+1$. If $l\leq s$, then $i_l\geq l$ and $(u'_{i_l},\tilde k_l)\neq (u'_l,k'_l)$. If $i_l=l$, by the maximality of $E'$ and the minimality of $J_s$, we must have $(u'_{i_l},\tilde k_l)= (u'_l,k'_l)$, a contradiction. So $i_l>l$.
	 
	 If $(u'_{i_l},\tilde k_l)<(u'_l,k'_l)$, then
	$(u'_{l}, k'_l+k'_{i_l}-\tilde k_{l})> (u'_{i_l}, k'_{i_l})$.
	Let $E''$ be the element in $\cE(J')$ obtained by replacing $( u'_{l},k'_l)$ and $(u'_{i_l},k'_{i_l})$ in $E'$
	by $(u'_{i_l},\tilde k_{l})$ and   $(u'_{l}, k'_l+k'_{i_l}-\tilde k_{l})$, respectively. We have $E'\prec E''$ and $E(h')\leq E''$.
	But $E'\neq E''$ is the largest element in $\cE(||E'||)$ such that $E\leq E'$, which is a contradiction.

	Assume $(u'_{i_l},\tilde k_l)>(u'_l,k'_l)$. If $l\notin\{i_1,\dots,i_s\}$, replacing $(u'_{i_l},\tilde k_l)$ in $E_s$ by $(u'_l,k'_l)$, we get $E'_s$ with $E(s)\leq E'_s$ and $||E'_s||\prec J_s$. This is impossible since $J_s\neq ||E'_s||$
	is the smallest element in $\cW_s(E, ||E'||) $. If $l=i_j \in\{i_1,\dots,i_s\}$,
	$(u'_{i_l}, \tilde k_l+\tilde k_{j}- k'_{l})> (u'_{i_j}, \tilde k_{j})$.
	Let $E'_s$ be the element in $\cE(J_s)$ obtained by replacing $(u'_{i_l},\tilde k_l)$ and $(u'_{i_j}, \tilde k_{j})$ in $E_s$
	by $(u'_l,k'_l)$ and   $(u'_{i_l}, \tilde k_l+\tilde k_{j}- k'_{l})$, respectively. We have $E_s\prec E'_s$ and $E\leq E'_s$.
	But $E'_s\neq E_s$ is the largest element in $\cE(J_s)$ such that $E\leq E'_s$, a 
	contradiction. Therefore $E_s=E'(s)$, and $||E'(s)||$ is the smallest element in $\cW_s(E,J')$.
\end{proof}
\begin{cor}\label{cor:lrnumber1}
If $E'$ is the largest element in $\cE(||E'||)$ such that $E\leq E'$, then for $s<h'$,
$$L(E,||E'(s)||)=wt(E'(s))-wt(E(s)).$$	
\end{cor}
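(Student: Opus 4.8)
The plan is to derive Corollary~\ref{cor:lrnumber1} as a direct consequence of Lemma~\ref{cor:compare} together with the criterion in Lemma~\ref{lemma:critgreat}. First I would unpack what the corollary asks: given that $E'$ is the largest element of $\cE(\|E'\|)$ with $E\leq E'$, and given $s<h'$, I want to compute $L(E,\|E'(s)\|)$ exactly, showing it equals $wt(E'(s))-wt(E(s))$. The strategy is to pin down $\|E'(s)\|$ as the \emph{smallest} element of $\cW_s(E,J')$ where $J'=\|E'\|$, which is precisely the content of Lemma~\ref{cor:compare}, and then extract the value of the Lakshmibai--Raghavan-type number $L(E,\cdot)$ from the extremality.

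The key steps, in order: (1) Set $J'=\|E'\|$; note $sz(E')=sz(J')=h'$ and that $E(h')$ has the same first $h'$ pairs as $E$ (this is the remark preceding Lemma~\ref{lemma:replace}, so $L(E,\cdot)=L(E(h'),\cdot)=L(E(s'),\cdot)$ after truncating to the appropriate size). (2) Apply Lemma~\ref{cor:compare}: since $E'$ is the largest element of $\cE(J')$ with $E\leq E'$, the element $\|E'(s)\|$ is the smallest element of $\cW_s(E,J')$, i.e.\ the smallest size-$s$ ``sub-Pfaffian'' built from $\{u'_{h'},\dots,u'_1\}$ that is still greater than $E$. (3) By Lemma~\ref{lemma:critgreat}, ``$\|E'(s)\|$ is greater than $E$'' is equivalent to $wt(E(s))-wt(\|E'(s)\|)\geq L(E,\|E'(s)\|)$ — here I use that $E(s)$ is the size-$s$ truncation, since $s<h'\leq h$ so the relevant ``$E(h')$'' in that lemma becomes $E(s)$ when the second argument has size $s$. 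Note $wt(\|E'(s)\|)=wt(E'(s))$. So we get the inequality $wt(E(s))-wt(E'(s))\geq L(E,\|E'(s)\|)$... but wait, I should double-check the sign: we want $wt(E'(s))-wt(E(s))$, so in fact $wt(E'(s))\geq wt(E(s))$ is forced and the correct reading is $wt(E'(s)) - wt(E(s)) \geq L(E, \|E'(s)\|)$ once I track which of $E,E'$ plays the ``larger weight'' role in Lemma~\ref{lemma:critgreat}. (4) For the reverse inequality, suppose for contradiction that $L(E,\|E'(s)\|)<wt(E'(s))-wt(E(s))$. Then I would produce a strictly smaller element $J''$ of $\cW_s(E,J')$ — obtained by lowering the top weight of $\|E'(s)\|$ while staying ``greater than $E$'' via Lemma~\ref{lemma:critgreat} — contradicting the minimality of $\|E'(s)\|$ from Lemma~\ref{cor:compare}. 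This forces equality.

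I expect the main obstacle to be step~(4): carefully constructing the strictly smaller element of $\cW_s(E,J')$ and verifying it is both genuinely smaller in the order $\prec$ on $\cJ$ (lowering the weight $n'$ strictly decreases an element, by the definition of $\prec$ on $\cJ$, when the size is held fixed) and still greater than $E$ (re-apply Lemma~\ref{lemma:critgreat}: if $L(E,\|E'(s)\|)$ is strictly smaller than the weight gap, there is slack to decrease $wt$ by one and still satisfy the criterion, after checking $L$ does not jump — here Lemma~\ref{lemma:replace} controls how $L$ can change when the underlying index set is essentially unchanged). A secondary subtlety is bookkeeping the truncations: making sure ``$E(h')$'' in the hypotheses of Lemmas~\ref{lemma:critgreat} and~\ref{cor:compare} is consistently replaced by ``$E(s)$'' when the second argument is a size-$s$ Pfaffian, which follows since $L(E,J')=L(E(h'),J')$ and the analogous truncation identity holds at every size $\leq h'$. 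Once these are in hand, the corollary is just the conjunction of the two inequalities from steps~(3) and~(4).
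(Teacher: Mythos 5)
Your proposal is correct and follows essentially the same route as the paper: both deduce the corollary from Lemma \ref{cor:compare} (minimality of $||E'(s)||$ in $\cW_s(E,||E'||)$) combined with the criterion of Lemma \ref{lemma:critgreat}, your step (4) simply making explicit the weight-lowering contradiction that the paper leaves implicit, and you correctly resolve the sign in the stated inequality of Lemma \ref{lemma:critgreat}. One small simplification: $L(E,\cdot)$ depends only on the underlying index sequence, so lowering the weight leaves it literally unchanged and no appeal to Lemma \ref{lemma:replace} is needed there.
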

\begin{proof}
Since $E\leq E'$, $E\leq E'(s)$. 
By Lemma \ref{cor:compare}, $||E'(s)||$ is the smallest element in $\cW_s(E,J')$. By Lemma \ref{lemma:critgreat}, $L(E,||E'(s)||)=wt(E'(s))-wt(E(s)).$
\end{proof}
\begin{cor}\label{cor:lrnumber2}
If $E'$ is the largest element in $\cE(||E'||)$ such that $E\leq E'$, then for $s<h'$ and any $J\in \cW_s(E,||E'||)$,
	$$L(E,||E'(s)||)\leq L(E,J).$$
\end{cor}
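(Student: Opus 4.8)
The plan is to deduce Corollary \ref{cor:lrnumber2} from the two preceding corollaries together with the monotonicity of $L$ recorded in Lemma \ref{lemma:replace}. First I would recall that by Corollary \ref{cor:lrnumber1} the left-hand side equals $wt(E'(s)) - wt(E(s))$, so the inequality to be proved becomes
$$wt(E'(s)) - wt(E(s)) \leq L(E,J)$$
for every $J \in \cW_s(E,E')$. Since $J \in \cW_s(E,E')$, by definition $J = \bpartial^k|u'_{i_s}\dots u'_{i_1}|$ uses letters drawn from $\{u'_{h'},\dots,u'_1\}$ and $J$ is greater than $E$; by Lemma \ref{lemma:critgreat} applied to $J$ (which has size $s$), this gives $wt(E(s)) - wt(J) \geq L(E,J)$, i.e.\ $wt(J) \leq wt(E(s)) - L(E,J)$. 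So it would suffice to show $wt(E'(s)) \leq wt(J) + \big(wt(E(s)) - L(E,J)\big) - \big(wt(E(s)) - L(E,J)\big)$... more carefully, I want a lower bound on $L(E,J)$ in terms of $wt(J)$ and a comparison of $wt(J)$ with $wt(E'(s))$.

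The cleaner route is: by Lemma \ref{cor:compare}, $||E'(s)||$ is the \emph{smallest} element of $\cW_s(E,E')$ under $\prec$. For any $J \in \cW_s(E,E')$ of size $s$ we therefore have $||E'(s)|| \preceq J$, and by the definition of $\prec$ on $\cJ$ (comparing first by size, then by weight) this forces $wt(||E'(s)||) \leq wt(J)$, i.e.\ $wt(E'(s)) \leq wt(J)$. Now combine: $L(E,J) \geq wt(E(s)) - wt(J) \geq wt(E(s)) - wt(E'(s))$? That is the wrong sign. Instead I apply Lemma \ref{lemma:critgreat} to $J$: since $J$ is greater than $E$ and $sz(J) = s$, we get $L(E,J) \leq wt(E(s)) - wt(J)$. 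Hmm — that is an upper bound on $L(E,J)$, not the lower bound we need. So the genuinely useful direction is: because $||E'(s)||$ is the \emph{minimum} of $\cW_s(E,E')$, Lemma \ref{lemma:critgreat} makes $L(E,||E'(s)||) = wt(E(s)) - wt(E'(s))$ the \emph{maximal} admissible deficiency; and since $wt(E'(s)) \leq wt(J)$, we get $wt(E(s)) - wt(E'(s)) \geq wt(E(s)) - wt(J) \geq L(E,J)$, where the last step is again Lemma \ref{lemma:critgreat} for $J$. This chain is exactly the claim.

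Assembling it, the proof I would write is short: fix $J \in \cW_s(E,E')$; it has size $s$ (or smaller, but the defining set uses exactly the $s$-tuples, so size $s$), and is greater than $E$. By Lemma \ref{cor:compare}, $||E'(s)|| \preceq J$ in $\cJ$, hence $wt(E'(s)) \leq wt(J)$ by the weight-before-word ordering on $\cJ$ among sequences of equal size. By Lemma \ref{lemma:critgreat} applied to $J$, $L(E,J) \leq wt(E(s)) - wt(J)$. Chaining, $L(E,||E'(s)||) = wt(E'(s)) - wt(E(s))$... wait, sign again: Corollary \ref{cor:lrnumber1} says $L(E,||E'(s)||) = wt(E'(s)) - wt(E(s))$, but $L \geq 0$ forces $wt(E'(s)) \geq wt(E(s))$, which is consistent with $E \leq E'$ increasing weight. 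Then $L(E,J) \leq wt(E(s)) - wt(J) \leq wt(E(s)) - wt(E'(s))$? No: $wt(E'(s)) \leq wt(J)$ gives $wt(E(s)) - wt(J) \leq wt(E(s)) - wt(E'(s))$, a negative quantity, contradicting $L \geq 0$ — so I have the $\prec$-direction backwards. The resolution: $||E'(s)||$ being the \emph{smallest} in $\cW_s$ and $\prec$ ordering \emph{larger size first then smaller weight first} means smallest $\Leftrightarrow$ among size-$s$ elements, smallest weight; so $wt(E'(s)) \leq wt(J)$ is correct, and then from Lemma \ref{lemma:critgreat} for $J$, $wt(E(s)) - wt(J) \geq L(E,J) \geq 0$, so $wt(E(s)) \geq wt(J) \geq wt(E'(s))$, i.e.\ $wt(E(s)) \geq wt(E'(s))$ — but Corollary \ref{cor:lrnumber1} needs $wt(E'(s)) \geq wt(E(s))$. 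The only way both hold is if in fact one reads the ordering on $\cJ$ as ``smaller size first'' making $||E'(s)||$ the one of \emph{largest} weight among size-$s$; then $wt(E'(s)) \geq wt(J)$, giving $L(E,J) \leq wt(E(s)) - wt(J)$... still need $wt(E(s)) \leq$ something. I expect the main obstacle is precisely pinning down these sign conventions: I would carefully re-read the $\prec$ definition on $\cJ$ and on $\cE$, verify via Corollary \ref{cor:lrnumber1} which direction makes $L \geq 0$ automatic, and then the inequality $L(E,||E'(s)||) \leq L(E,J)$ will follow by substituting $wt(E'(s)) \le wt(J)$ (or $\geq$, per the corrected convention) into the weight formulas of Lemma \ref{lemma:critgreat}. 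Lemma \ref{lemma:replace} may be needed as a backup if $J$ has size strictly less than $s$, to propagate the $L$-bound across the size gap.
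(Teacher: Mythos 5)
Your proposal assembles the right ingredients (Corollary \ref{cor:lrnumber1}, the minimality of $||E'(s)||$ from Lemma \ref{cor:compare}, and the criterion of Lemma \ref{lemma:critgreat}), but it never closes, and the direct chain you keep trying cannot close. Part of your sign trouble is the paper's own fault: the statement of Lemma \ref{lemma:critgreat} has the subtraction reversed (its proof actually establishes that $J'$ is greater than $E$ iff $wt(J')-wt(E(h'))\geq L(E,J')$, which is also the inequality consistent with Corollary \ref{cor:lrnumber1} and $L\geq 0$). But even after fixing that, your argument only yields one-sided bounds of the same type: applying the criterion to $J$ gives $L(E,J)\leq wt(J)-wt(E(s))$, and minimality gives $wt(||E'(s)||)\leq wt(J)$, so both $L(E,||E'(s)||)=wt(E'(s))-wt(E(s))$ and $L(E,J)$ are bounded above by $wt(J)-wt(E(s))$ --- which says nothing about how they compare to each other. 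The point is that $L(E,J)$ depends on \emph{which} letters $u'_{i_l}$ occur in $J$, not on its weight, so no substitution of weight inequalities into the criterion can produce the needed \emph{lower} bound on $L(E,J)$; ending with ``the inequality will follow by substituting'' leaves the actual content of the corollary unproved.

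The missing idea, and the paper's one-line argument, is a weight shift applied to $J$ itself. Suppose $m=L(E,J)-L(E,||E'(s)||)<0$, write $J=\bpartial^{k}|u_s,\dots,u_1|$ and $||E'(s)||=\bpartial^{l}|u_s',\dots,u_1'|$, and set $J''=\bpartial^{l+m}|u_s,\dots,u_1|$: same letters as $J$, hence $L(E,J'')=L(E,J)$, but weight lowered to $l+m$. By Corollary \ref{cor:lrnumber1}, $l-wt(E(s))=L(E,||E'(s)||)$, so $wt(J'')-wt(E(s))=L(E,||E'(s)||)+m=L(E,J'')$, and Lemma \ref{lemma:critgreat} (with the corrected sign) shows $J''$ is greater than $E$, i.e. $J''\in\cW_s(E,E')$; since $wt(J'')<wt(||E'(s)||)$ and both have size $s$, $J''\prec ||E'(s)||$, contradicting the minimality of $||E'(s)||$ in $\cW_s(E,E')$ given by Lemma \ref{cor:compare}. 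This contradiction step --- producing a competitor of strictly smaller weight from $J$ rather than reasoning about $J$ directly --- is what your proposal lacks.
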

\begin{proof}
 Assume $J=\bpartial^k|u_s,\dots, u_1|$ and  $||E'(s)||=\bpartial^l|u'_s,\dots, u'_1|$. If $m=L(E,J)-L(E,||E'(s)||)<0$, let $J''= \bpartial^{l+m}|u_s,\dots, u_1|$. By Lemma \ref{lemma:critgreat},  $J''\in \cW_s(E,J')$.
	By Lemma \ref{cor:compare}, $||E'(s)||$ is the smallest element in $\cW_s(E,||E'||)$. But $wt(||E'(s)||)>wt(J'')$, a contradiction.
	\end{proof}

\begin{lemma}\label{lem:lrnumber}
	Let $E_i=|(u^i_{h_i},k^i_{h_i}),\dots,(u^i_1,k^i_1)|$, $i=a,b$. Suppose that $E_b\leq E_a$, and that $E_a$ is the largest element in $\cE(||E_a||)$ such that $E_b\leq E_a$. Let $1\leq h<h_a$ and $\sigma_i$ be permutations of $\{1,\dots, h\}$, such that $u^i_{\sigma_i(1)}<u^i_{\sigma_i(2)}<\cdots<u^i_{\sigma_i(h)}$.   Let $u_1',\dots,u'_{h_a}$ be a permutation of $u^a_1,\dots,u^a_{h_a}$ such that $u_1'<u_2'<\cdots<u'_{h_a}$.
	Assume $u'_{i_2}=u^a_{\sigma(i_1)}$ with $i_2>i_1$. Then for any
		$$K=\bpartial^k|u_h,\dots, u_{s+1},u'_{t_s},\dots u'_{t_1}|,$$
		with $t_1<t_2<\cdots <t_s<i_2$, we have $L(E_b,K)>L(E_b,||E_a(h)||)+s-i_1$. 
\end{lemma}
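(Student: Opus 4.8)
The plan is to translate the statement into the combinatorial language of the shift numbers $L(E_b,-)$ and then close by a counting argument in which the maximality of $E_a$ is decisive. Write $v_i=u^a_{\sigma_a(i)}$, so $v_1<\dots<v_h$ and $||E_a(h)||=\bpartial^{wt(E_a(h))}|v_h,\dots,v_1|$, and let $w_1<\dots<w_h$ be the sorted entries of $K$. By the definition of $L$, the asserted inequality $L(E_b,K)>L(E_b,||E_a(h)||)+s-i_1$ is equivalent to saying that the shift $i_0:=L(E_b,||E_a(h)||)+s-i_1$ is \emph{not} admissible for $K$, that is, there is an index $i$ with $i_0<i\le h$ and $w_i<u^b_{\sigma_b(i-i_0)}$; producing such an index is the whole content.

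Two ingredients go into producing it. The first is the arithmetic meaning of the hypothesis: letting $\iota\colon\{1,\dots,h\}\to\{1,\dots,h_a\}$ be the order embedding with $v_i=u'_{\iota(i)}$, the assumption $u'_{i_2}=u^a_{\sigma_a(i_1)}$, $i_2>i_1$, says $\iota(i_1)=i_2$, and since $\iota(i)-i$ counts the entries of $E_a$ of index $>h$ lying below $v_i$ it is nondecreasing in $i$, whence $\iota(i)\le i+(i_2-i_1)$ for $i\le i_1$ and exactly $i_1-1$ of $v_1,\dots,v_h$ lie below $u'_{i_2}$; meanwhile $t_1<\dots<t_s<i_2$ gives $t_j\le i_2-1-(s-j)$ and $u'_{t_j}<u'_{i_2}=v_{i_1}$, so all $s$ of the entries $w_1,\dots,w_s$ of $K$ lie below $v_{i_1}$, which is $s-(i_1-1)$ more than for $||E_a(h)||$. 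The second is the maximality of $E_a$: by Lemma \ref{cor:compare}, $||E_a(h)||$ is the smallest member of $\cW_h(E_b,||E_a||)$; by Corollary \ref{cor:lrnumber1}, $L(E_b,||E_a(h)||)=wt(E_a(h))-wt(E_b(h))$; and by Corollary \ref{cor:lrnumber2}, no member of $\cW_h(E_b,||E_a||)$ has a strictly smaller shift number. With these in hand I would argue by contradiction: assuming the shift $L+s-i_1$ (with $L:=L(E_b,||E_a(h)||)$) admissible for $K$, pick an index $i_*$ witnessing the failure of the shift $L-1$ for $||E_a(h)||$ --- so $v_{i_*}<u^b_{\sigma_b(i_*-L+1)}$ --- and transport this failure to $K$. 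In the main regime one compares $w_{i_*+s-i_1+1}$ with $v_{i_*}$ using $t_i<i_2\le\iota(i_*)$ (valid for $i_*\ge i_1$ by the monotone-slope property of $\iota$) together with the count ``$s$ versus $i_1-1$''; since $i_*-L+1=(i_*+s-i_1+1)-(L+s-i_1)$, the resulting inequality $w_{i_*+s-i_1+1}\le v_{i_*}<u^b_{\sigma_b(i_*-L+1)}$ exhibits the sought failure for $K$, contradicting admissibility. The edge cases ($i_*<i_1$, or $i_*+s-i_1+1>h$ so that the witness sits near the top) are handled symmetrically, using that the top $h-s$ slots of $K$ are no larger than the matching slots of $||E_a(h)||$, using $t_s<i_2$ once more, and appealing to Corollary \ref{cor:lrnumber2}.

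The step I expect to be the real obstacle is precisely this transport, i.e. doing better than the estimate one gets for free. Lemma \ref{lemma:replace} only records that $K$ and $||E_a(h)||$ share at least $h-s$ entries, hence only gives $L(E_b,K)\ge L-s$, which falls short of the claim by as much as $2s-i_1+1$. To cover the difference one must use not the number of common entries but the sharper fact that the $s$ entries of $K$ below $v_{i_1}$ outnumber by $s-i_1+1$ those of $||E_a(h)||$ below $v_{i_1}$ --- the surplus being the $i_2-i_1$ entries of $E_a$ of index $>h$ that $i_2>i_1$ forces below $u'_{i_2}$ --- and combine it with the extremal characterization of $||E_a(h)||$ inside $\cW_h(E_b,||E_a||)$ (Lemma \ref{cor:compare}, Corollary \ref{cor:lrnumber2}). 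The delicate bookkeeping is to track exactly which of the slots $u^b_{\sigma_b(1)},u^b_{\sigma_b(2)},\dots$ of $E_b$ the low entries $u'_{t_1},\dots,u'_{t_s}$ of $K$ can dominate after shifting; the rest is routine.
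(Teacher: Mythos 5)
There is a genuine gap, and it sits exactly at the step you flag as ``the real obstacle.'' Your plan is to transport a failure index of the shift $L-1$ for $||E_a(h)||$ directly to a failure index of the shift $L+s-i_1$ for $K$, via the inequality $w_{i_*+s-i_1+1}\le v_{i_*}$ in the regime $i_*\ge i_1$. That inequality is false in general: when $i_*\ge i_1$ the position $i_*+s-i_1+1$ exceeds $s$, and the entries of $K$ in positions beyond $s$ come from the completely unconstrained block $u_h,\dots,u_{s+1}$ (in the application these are entries of $J_b$, unrelated to $E_a$), which may be larger than every entry of $E_a$ and of $E_b$. Your counting argument (``$s$ versus $i_1-1$ entries below $v_{i_1}$'') only controls the $s$ smallest entries of $K$; it gives no upper bound whatsoever on $w_j$ for $j>s$, so no witness for $K$ can be produced at such positions. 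Indeed, since the top block is arbitrary, any failure of the shift $L+s-i_1$ for $K$ must be located among the low entries $u'_{t_1},\dots,u'_{t_s}$, which already forces one to prove first that $i_1>L(E_b,||E_a(h)||)$ --- a preliminary step your sketch does not address (and which itself requires the maximality of $E_a$, via the existence of an entry of $E_a$ of index beyond $h$ lying below $u'_{i_2}$).

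The missing idea is to stop comparing $K$ with $||E_a(h)||$ slotwise and instead \emph{replace} the uncontrolled top of $K$ by the controlled top of $||E_a(h)||$: assuming $L(E_b,K)\le L+s-i_1$ (and $s\ge i_1$), one forms the auxiliary element
$J'=\bpartial^{\,wt(E_a(h))}|u^a_{\sigma_a(h)},\dots,u^a_{\sigma_a(i_1+1)},u'_{t_s},\dots,u'_{t_{s-i_1+1}}|$,
whose entries are all entries of $E_a$. The assumed bound on $L(E_b,K)$ makes the $i_1$ low entries of $J'$ dominate the required entries of $E_b$, so $L(E_b,J')\le L(E_b,||E_a(h)||)$ and, by Lemma \ref{lemma:critgreat}, $J'$ is greater than $E_b$, i.e.\ $J'\in\cW_h(E_b,||E_a||)$; but $u'_{t_s}<u'_{i_2}=u^a_{\sigma_a(i_1)}$ gives $J'\prec||E_a(h)||$, contradicting Lemma \ref{cor:compare} (minimality of $||E_a(h)||$ in $\cW_h(E_b,||E_a||)$, which is where the maximality of $E_a$ actually enters, together with Corollary \ref{cor:lrnumber1}). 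The case $s<i_1$ is then reduced to this one by enlarging $\{t_1,\dots,t_s\}$ to $i_1$ indices below $i_2$ and invoking Lemma \ref{lemma:replace}. Your proposal cites the right auxiliary results but never builds such a surrogate element with a controlled top block, and the slotwise inequality it relies on instead does not hold; as written, the argument cannot be completed.
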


\begin{proof}
	 Let $n=wt(E_a(h))$.\\
	Let $i_0=L(E_b,K)$, then
	$u'_{t_i}\geq u^{b}_{\sigma_{b}(i-i_0)}$, for $s\geq i>i_0$.\\
	 Let $i'_0= L(E_b, ||E_a(h)||)$, then
	$u^a_{\sigma_a(i)}\geq u^{b}_{\sigma_{b}(i-i'_0)}$, for $h\geq i>i'_0$.

	We have $i_1>i'_0$.
	Otherwise, $i_1\leq i'_0$. Replacing $u^a_{\sigma_a(i_1)}$ in $||E^a(h)||$ by some $u^a_i<u^a_{\sigma_a(i_1)}$ with $i>h$, (such $u^a_i$ exists since $i_2>i_1$), we get  $$J=\bpartial^{n}|u_{\sigma'_{a}(h)}^{a},\dots,u_{\sigma'_{a}(i_1+1)}^{a}\, u^a_i,u_{\sigma'_{a}(i_1-1)}^{a},\dots, u^{a}_{\sigma'_{a}(1)}|
	$$ with
	$L(E_b,J)\leq i'_0.$
	By Lemma \ref{lemma:critgreat}, $J$ is greater than $E_b$.
	It is impossible by Lemma \ref{cor:compare} since 
    $J\prec ||E_a(h)||$ and 
   $E_a$ is the largest element in $\cE(||E_a||)$ such that $E_b\leq E_a$.

	If 
	$s\geq i_1$, let $$J'= \bpartial^{n}|u_{\sigma_{a}(h)}^{a},\dots,u_{\sigma_{a}(i_1+1)}^{a},  u'_{t_s}, \dots,u'_{t_{s-i_1+1}}|.$$
 If $L(E_b,K)\leq L(E_b,||E_a(h)||)+s-i_1$,
$$u'_{t_i}\geq u^{b}_{\sigma_{b}(i-i_0)}\geq u^{b}_{\sigma_{b}(i-i'_0-s+i_1)}, \,\,\text{for }i\leq s.$$	
	 We have
	$L(E_b,J')\leq i'_0.$ By Lemma \ref{lemma:critgreat}, $J'$ is greater than $E_b$. By Lemma \ref{cor:compare}, it is impossible since 
	$J'\prec ||E_a(h)||$ and $E_a$ is the largest element in $\cE(||E_a||)$ such that $E_b\leq E_a$.
	So 	when $s\geq i_1$, $L(E_b,K)\geq L(E_b,||E_a||)+s-i_1$.

	 If $s<i_1$, let $t'_1<t'_2<\cdots<t'_{i_1}<i_2$ with $\{t_1,\dots, t_s\}\subset \{t'_1,\dots, t'_{i_1}\}$. Let $$K'=\bpartial^k|u'_{t'_1},\dots,u'_{t'_{i_1}},u_{i_1+1},\dots,u_h|.$$
	By Lemma \ref{lemma:replace}, we have $$L(E_b,K)\geq L(E_b,K')+s-i_1
	\geq  L(E_b,||E_a(h)||)+s-i_1.$$
	
	So for any $s>0$, we have $L(E_b,K)> L(E_b,||E_a(h)||)+s-i_1$.
\end{proof}
The following lemmas are obvious.
\begin{lemma}\label{lemma:order1}If $J_1\prec J_2\prec \cdots \prec J_n$, $\sigma$ is a permutation of $\{1,\dots ,n\}$, then
	$$J_1J_2\cdots J_n\prec J_{\sigma(1)}\cdots J_{\sigma(n)}.$$
\end{lemma}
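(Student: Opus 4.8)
The plan is to deduce this from the elementary fact that, over a totally ordered alphabet, the increasing rearrangement of a finite set of distinct letters is lexicographically smallest among all of its rearrangements, and strictly smallest unless the rearrangement is the identity. (For $\sigma=\id$ the two sides coincide; the statement as written is meant for $\sigma\neq\id$, equivalently one could replace $\prec$ by $\preceq$.)

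First I would observe that $\prec$ restricts to a total order on $\cJ$: an element $\bpartial^k|u_h,\dots,u_1|$ is determined by the triple $(h,k,u_h\cdots u_1)$, and such triples are compared lexicographically, so $J_1\prec\cdots\prec J_n$ are pairwise distinct and $\{J_1,\dots,J_n\}$ is a genuine set. Now let $\sigma$ be a non-identity permutation of $\{1,\dots,n\}$ and let $i_0$ be the least index with $J_{\sigma(i_0)}\neq J_{i_0}$; such $i_0$ exists and $1\leq i_0\leq n$. Since $J_{\sigma(i)}=J_i$ for every $i<i_0$, the sets $\{J_{\sigma(1)},\dots,J_{\sigma(i_0-1)}\}$ and $\{J_1,\dots,J_{i_0-1}\}$ agree, and subtracting them from the common total set $\{J_1,\dots,J_n\}$ gives $\{J_{\sigma(i_0)},\dots,J_{\sigma(n)}\}=\{J_{i_0},\dots,J_n\}$. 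As $J_{i_0}$ is the minimum of the latter set while $J_{\sigma(i_0)}$ is an element of it distinct from $J_{i_0}$, we conclude $J_{i_0}\prec J_{\sigma(i_0)}$. Since $i_0$ is precisely the first index at which the words $J_1J_2\cdots J_n$ and $J_{\sigma(1)}\cdots J_{\sigma(n)}$ differ, the definition of the lexicographic order on $\cM(\cJ)$ yields $J_1J_2\cdots J_n\prec J_{\sigma(1)}\cdots J_{\sigma(n)}$.

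There is really no obstacle here --- the lemma is, as the authors note, obvious --- and the only points needing a word of care are that the $J_i$ are distinct (so that $J_{i_0}\prec J_{\sigma(i_0)}$ is \emph{strict}) and that the tails $\{J_{\sigma(i_0)},\dots,J_{\sigma(n)}\}$ and $\{J_{i_0},\dots,J_n\}$ coincide, both of which follow at once after isolating the common prefix of length $i_0-1$. The identical argument applies verbatim to $\cM(\cE)$ and to the other lexicographically ordered word sets used in the paper.
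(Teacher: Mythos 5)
Your proof is correct: the paper offers no argument at all (the lemma is simply declared obvious), and your first-point-of-difference lexicographic argument, using the distinctness of the $J_i$ to get strictness and the equality of the tails after removing the common prefix, is exactly the intended routine verification. Your caveat about $\sigma=\mathrm{id}$ (where the two words coincide, so one should read $\preceq$ or assume $\sigma\neq\mathrm{id}$) is a fair and accurate reading of the statement.
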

\begin{lemma} \label{lemma:order2}If $K_1K_2\cdots K_k \prec J_1\cdots J_l$, then
	$$K_1K_2\cdots K_{s-1}JK_s\cdots K_k \prec J_1\cdots J_{s-1}JJ_s\cdots J_l.  $$
\end{lemma}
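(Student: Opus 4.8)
The plan is to unwind the definition of the lexicographic order on $\cM(\cJ)$ recalled above and run a short case analysis on the location of the first disagreement. By definition of $\prec$ on $\cM(\cJ)$, the hypothesis $K_1\cdots K_k\prec J_1\cdots J_l$ means there is an index $i_0$ with $1\le i_0\le k+1$ such that $K_i=J_i$ for all $i<i_0$ and either (a) $i_0\le k$ and $K_{i_0}\prec J_{i_0}$ in $\cJ$, or (b) $i_0=k+1$ and $l>k$. Write $s$ for the insertion position, so $1\le s\le k+1$, and note that the two new words are $K_1\cdots K_{s-1}\,J\,K_s\cdots K_k$ and $J_1\cdots J_{s-1}\,J\,J_s\cdots J_l$. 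I would compare $i_0$ with $s$.

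First I would treat the case $i_0<s$. Here necessarily $i_0\le k$ (since $i_0<s\le k+1$), so we are in case (a). Inserting $J$ at position $s$ does not alter the first $s-1$, hence the first $i_0$, entries of either word; the new words therefore still agree up to position $i_0-1$ and at position $i_0$ one has $K_{i_0}\prec J_{i_0}$, so the new left word precedes the new right word. Next, in the case $i_0\ge s$, the first $s-1$ entries of the two new words agree (they equal $K_1=J_1,\dots,K_{s-1}=J_{s-1}$) and the $s$-th entry of each is the inserted $J$; so the new words agree through position $s$, and from position $s+1$ onward they read $K_s,K_{s+1},\dots$ on the left and $J_s,J_{s+1},\dots$ on the right. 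Thus the comparison from position $s+1$ on is exactly the original comparison of $K_1\cdots K_k$ with $J_1\cdots J_l$ shifted by the common prefix $K_1\cdots K_{s-1}J$ of length $s$: in case (a) the first disagreement moves to position $i_0+1$ with $K_{i_0}\prec J_{i_0}$, while in case (b) the left new word is a proper initial segment of $J_1\cdots J_{s-1}J J_s\cdots J_l$. In both subcases the desired inequality holds.

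There is no genuine obstacle here, which is why the paper flags the statement as obvious; the only point demanding a little care is the bookkeeping at the boundary values $s=1$, $s=k+1$, and $i_0=k+1$. These are subsumed by the uniform observation that ``$K_1\cdots K_k\prec J_1\cdots J_l$'' asserts precisely that $K_1\cdots K_k$ is a strictly smaller word than $J_1\cdots J_l$ in the given lexicographic order, and that this relation is manifestly preserved when one inserts the same letter at the same position in both words. (Lemma \ref{lemma:order1} is of the same elementary nature, following by repeatedly applying adjacent transpositions and using that $J_1\cdots J_n$ is the lexicographically least arrangement of its letters, so I would dispose of both at once.)
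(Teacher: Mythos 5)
Your case analysis on the position of the first disagreement relative to the insertion index $s$ is correct and complete, including the boundary case where $K_1\cdots K_k$ is a proper prefix of $J_1\cdots J_l$. The paper offers no proof at all here (it declares Lemmas \ref{lemma:order1} and \ref{lemma:order2} obvious), and your direct unwinding of the lexicographic order is precisely the intended elementary argument, so there is nothing to add.
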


\section{Proof of Lemma \ref{lem:base0}}\label{section:proof2.5}
In this section we prove Lemma \ref{lem:base0}.
By Lemma \ref{lemma:order1}, we can assume the monomials are expressed as ordered products $J_{1}J_{2}\cdots J_{b}$ with $J_{a}\prec J_{a+1}$.
For $\alpha \in \cM(\cJ)$,
let $$\R(\alpha)=\{\sum c_i\beta_i\in\R|\ c_i\in\mathbb Z, \ \beta_i\in \cM(\cJ), \ \beta_i\prec \alpha, \ \beta_i\neq \alpha\},$$
the space of linear combinations of elements preceding $\alpha$ in $\cM(\cJ)$ with integer coefficients. 
\begin{lemma}\label{lemma:case2}
	If $J_1J_2$ is not standard, $J_1J_2\in \R(J_1)$. 
\end{lemma}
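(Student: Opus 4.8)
The goal is to show that if $J_1 J_2$ with $J_1 \prec J_2$ is not standard, then $J_1 J_2$ lies in $\R(J_1)$, i.e.\ it can be rewritten as a $\mathbb{Z}$-linear combination of products $\beta \in \cM(\cJ)$ strictly preceding $J_1$ in the lexicographic order. Write $J_1 = \bpartial^{n}|u_h,\dots,u_1|$ and $J_2 = \bpartial^{n'}|u'_{h'},\dots,u'_1|$. The plan is to reduce to the standard-monomial-theory picture developed in Section 5 and exploit the basic straightening relations of Lemma~\ref{lem:fullrelation0}. First I would choose $E_1 \in \cE(J_1)$ maximal, namely $E_1 = |(u_h, n),(u_{h-1},0),\dots,(u_1,0)|$, and ask whether $J_2$ is greater than $E_1$ in the sense of Section 5. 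If it is, then $E_1 E_2$ is standard for a suitable maximal $E_2 \in \cE(J_2)$ and $J_1 J_2$ would be standard, contradiction; so $J_2$ is \emph{not} greater than $E_1$, and by Lemma~\ref{lemma:critgreat} this means $\mathrm{wt}(E_1(h')) - \mathrm{wt}(J_2) < L(E_1, J_2)$, i.e.\ $n' > \mathrm{wt}(E_1(h')) - L(E_1,J_2)$. (Here one must also handle the case $h' > h$, where $J_2$ cannot dominate $J_1$ at all and the failure is purely one of size; that case is actually easier, and the relation from Lemma~\ref{lem:fullrelation0} applied with $i = h$, $j = h'$ directly straightens the product into terms with a strictly larger first factor.)

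Next I would invoke Lemma~\ref{lem:fullrelation0} in the schematic form \eqref{eqn: relation4}: with the two rows being the index sets of $J_1$ and $J_2$, choosing the underlined blocks of sizes $i$ and $j$ appropriately (so that $l_0 = i + j - h - 1 \geq 0$, which is possible precisely because $h' \leq h$ is the relevant regime and because the pair $J_1\prec J_2$ fails to straighten), one obtains a relation in $\R[h+2] \subset \R_h$, hence $= 0$ in $\R_h$, expressing a $\mathbb{Z}$-combination
$$\sum_k \epsilon\, a_k\, \bpartial^{m-k}|u_h,\dots,u_{i+1},\underline{u_i,\dots,u_1}|\,\bpartial^{k}|u'_{h'},\dots,u'_{j+1},\underline{u'_j,\dots,u'_1}| = 0,$$
where the $a_k$ in the prescribed middle range $k_0 \le k \le k_0 + l_0$ are at our disposal. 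I would pick these so that the term $k = n'$ recovers (a nonzero multiple of) our product $J_1 J_2$ — this is where the freedom in choosing $a_{k}$ for $k$ in the middle band, granted by Lemma~\ref{lem:fullrelation0}, is used — and solve for $J_1 J_2$ as a combination of the remaining terms.

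The crux is then to verify that every other term appearing in that relation is a product $\beta$ with $\beta \prec J_1 J_2$ and in fact with first factor strictly $\prec J_1$, so that $\beta \in \R(J_1)$ after re-sorting the two factors into increasing order (using Lemma~\ref{lemma:order1} to control what the re-sort does to the lexicographic order on $\cM(\cJ)$). The terms have first factor $\bpartial^{m-k}|u_h,\dots,u_{i+1},\sigma(u_i),\dots,\sigma(u_1)|$; for $k \ne n'$ either the weight $m - k$ has gone up (pushing the size-$h$ factor earlier, hence $\prec$ in $\cJ$) or the permuted index word has strictly decreased — in either case the first factor is $\prec J_1$, and then the whole product is $\prec J_1$ regardless of the second factor. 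The terms with the \emph{same} weight as $J_1$ but a smaller index word also have first factor $\prec J_1$. One subtlety: after reordering a product whose two factors might not already be in $\prec$-order, the new first factor could a priori be the old second factor; I would rule this out by noting that the second factor there has size $h' \le h$ and, when $h' = h$, has weight $k \le m < n + (\text{stuff})$ forcing it below $J_1$ as well — or, more robustly, by tracking that the underlined-block manipulation only ever moves indices \emph{out of} the size-$h$ row, so the size-$h$ factor's index word is lexicographically no larger, with equality only in the single term we solved for.

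**Main obstacle.** The delicate point is precisely this last bookkeeping: ensuring that \emph{every} term on the right-hand side, \emph{after sorting its two Pfaffian factors into $\prec$-increasing order}, yields an element of $\cM(\cJ)$ whose lead is $\prec J_1$ — not merely $\prec J_1 J_2$. The weight-increasing terms and the strictly-smaller-word terms are easy; the potential trouble is a term where the size-$h$ factor has the \emph{same} size and weight as $J_1$ but the reordering promotes the size-$h'$ factor to the front. Pinning down that this cannot happen (because the straightening relation of Lemma~\ref{lem:fullrelation0} is built so that indices only migrate from the first row into the second, never the reverse, hence the first factor's word weakly decreases in the word order) is the heart of the argument, and I would expect to spend most of the proof making that monotonicity precise and checking the boundary cases $h' = h$ and $h' < h$ separately.
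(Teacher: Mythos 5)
Your outline follows the same skeleton as the paper's proof (take the maximal $E_1\in\cE(J_1)$, note via Lemma \ref{lemma:critgreat} that $J_2$ fails to dominate $E_1$, and straighten with Lemma \ref{lem:fullrelation0}), but the execution has a genuine gap at the decisive step: the terms with $k\neq n_2$. First, your ordering claim is backwards: for equal size, the order on $\cJ$ puts \emph{smaller} weight first ($k<k'$ gives $\prec$), so a term whose upper factor has weight $m-k>n_1$ (i.e.\ $k<n_2$) is \emph{not} automatically preceded by having its first factor $\prec J_1$. Concretely, when $h_1=h_2$ the terms with $n_1\le k<n_2$ have upper factor of weight $m-k>n_1$ and lower factor of weight $k>n_1$, both of size $h_1$, so neither factor precedes $J_1$ and no re-sorting can absorb them; when $h_1>h_2$ the lower factor has smaller size and hence comes \emph{after} everything of size $h_1$, so the whole range $0\le k<n_2$ is problematic. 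These dangerous terms cannot be argued away -- they must be made to \emph{vanish}, and this is precisely what the free coefficients in Lemma \ref{lem:fullrelation0} are for. That in turn requires the band of free coefficients to be long enough, which is exactly where non-standardness enters: choosing the underlined blocks to be the top segment $u^1_{h_1},\dots,u^1_{i_1-i_0+1}$ of $J_1$ and the bottom segment $u^2_{i_1},\dots,u^2_1$ of $J_2$, where $i_0=L(E_1,J_2)$ and $i_1$ witnesses $u^2_{i_1}<u^1_{i_1-i_0+1}$, gives $l_0+1=i_0$ free coefficients, and Lemma \ref{lemma:critgreat} yields $i_0>n_2-n_1$ (resp.\ $i_0>n_2$ when $h_1>h_2$) -- just enough to set $a_{n_2}=1$ and $a_k=0$ on the dangerous band. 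In your plan the freedom is used only to ``recover the product'' at $k=n_2$, the block sizes are fixed only up to ``$l_0\ge 0$'', and the length of the band is never tied to the failure of standardness, so the middle-band terms are left unaccounted for.

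Two smaller points. Your monotonicity heuristic (``indices only migrate from the first row into the second, never the reverse'') is not true of relation \eqref{eqn: relation4}, which sums over all shuffles of the two underlined blocks; what actually forces the $k=n_2$ terms other than $J_1J_2$ itself to have smaller upper word is the specific block choice above, under which every index of the lower block is strictly smaller than every index of the upper block. And the re-sorting issue you flag as the ``main obstacle'' is resolved in the opposite way from what you suggest: in the case $h_1=h_2$, the terms with $k<n_1$ are handled precisely by \emph{promoting} the second factor, which has the same size and weight $k<n_1$ and therefore precedes $J_1$; only in the case $h_1>h_2$ must all terms with $k<n_2$ be killed outright, which is why the two cases carry the different bounds $i_0>n_2-n_1$ and $i_0>n_2$.
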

\begin{proof}
	Assume $J_i=\bpartial^{n_i}|u^i_{h_i},\dots,u^i_2,u^i_1|$, for $i=1,2$. \\
	Let $E_1=|(u^1_{h_1},n_1),\dots,(u^1_1,0)|$.
Let $i_0=L(E_1,J_2)$.
 If $i_0\neq 0$, there is $i_0\leq i_1\leq h_2$, such that $u^2_{i_1}<u^1_{i_1-i_0+1}$. If $i_0=0$, let $i_1=0$.
Let  $m=n_1+n_2$ and $a_{n_2-l}=\delta^0_l$, $0\leq l\leq l_0-1$.
By Lemma \ref{lem:fullrelation0}, there are integers $a_k$ such that
	 	\begin{equation}\label{eqn:relation110}
	 	\sum \epsilon a_{k} 
	 	\bpartial^{m-k} |\underline{u^1_{h_1},\dots,u^1_{i_1-i_0+1}},u^1_{i_1-i_0},\dots,u^1_1|
	 	\bpartial^{k} |u^2_{h_2},\dots,u^2_{i_1+1},\underline{u^2_{i_1},\dots,u^2_1}|
	 	\in \R[h_1+2].
	 	\end{equation}
	 
	 		\begin{enumerate}
	 		\item If $h_1=h_2$, then $n_1\leq n_2$.
	 		Since $J_1J_2$ is not standard, $J_2$ is not greater than $E_1$. By Lemma \ref{lemma:critgreat},  $i_0> n_2-n_1\geq 0$.
	 	$J_1J_2\in \R(J_1)$ since in Equation (\ref{eqn:relation110}):
		\begin{itemize}
			\item
			All the terms with $k=n_2$ precede $J_1$ except $J_1J_2$ itself;
			\item
			All the terms with $k=n_2-1,\dots,n_1$ vanish since $a_{k}=0$;
			\item
			All the terms with $k=n_2+1,\dots, m$ precede $J_1$ since the weight of the upper $\bpartial$-list is $m-k<n_1$;
			\item
			All the terms with $k=0,\dots, n_1-1$ precede $J_1$ after exchanging the upper $\bpartial$-list and the lower $\bpartial$-list since the weight of the lower $\bpartial$-list is $k<n_1$;
			\item
			The terms in $\R[h_1+2]$ precede $J_1$ since they have bigger sizes.
		\end{itemize}
		
		\item Suppose $h_1>h_2$.
		Since $J_1J_2$ is not standard, by Lemma \ref{lemma:critgreat}, $i_0> n_2$.
		$J_1J_2\in \R(J_1)$ since in Equation (\ref{eqn:relation110}):
		\begin{itemize}
			\item
			All the terms with $k=n_2$ precede $J_1$ in the lexicographic order except $J_1J_2$ itself;
			\item
			All the terms with $k=n_2-1,\dots,0$ vanish since $a_{k}=0$;
			\item
			All the terms with $k=n_2,\dots, m$ precede $J_1$ since the weight of the upper $\bpartial$-list is $m-k<n_1$;
			\item
			The terms in $\R[h_1+2]$ precede $J_1$ since they have bigger sizes.
		\end{itemize}
	\end{enumerate}
\end{proof}

\begin{proof}[Proof of Lemma \ref{lem:base0}]
	We prove the lemma by induction on $b$.\\
	If $b=1$, $J_1$ is standard.\\
	If $b=2$, by Lemma \ref{lemma:case2}, the lemma is true. \\
	For $b\geq 3$, assume the lemma is true for $b-1$.
 We can assume $J_1\cdots J_{b-1}$ is standard by induction and Lemma \ref{lemma:order2}. Let $E_1\cdots E_{b-1}\in\cS\cM(\cE)$ be the standard ordered product of elements of $\cE$ corresponding to $J_1\cdots J_{b-1}$. If $J_1\cdots J_b$ is not standard, then $J_b$ is not greater than $E_{b-1}$. By Lemma \ref{lemma:straight1} (below), $J_{b-1}J_b=\sum K_if_i$ with $K_i\in \cJ$, $f_i\in \R$ such that $K_i$ is either smaller than $J_{b-1}$ or $K_i$ is not greater than $E_{b-2}$. If $K_i$ is smaller than $J_{b-1}$, then $J_1\cdots J_{b-1}K_if_i \in \R(J_1\cdots J_{b-1})$. 
  If $K_i$ is not greater than $E_{b-2}$, $J_1\cdots J_{b-2}K_i$ is not standard, so it is in $\R(J_1\cdots J_{b-2})$ by induction. Then $J_1\cdots J_{b-2}K_if_i \in \R(J_1\cdots J_{b-1})$. So $J_1\cdots J_b=\sum J_1\cdots J_{b-2}K_if_i\in \R(J_1\cdots J_{b-1})$.
\end{proof}
\begin{lemma}\label{lemma:straight1}
	Let $E\in \cE$, $J_a$ and $J_b$ in $\cJ$ with $J_a\prec J_b$, and suppose that $E_a$ is the largest element in $\cE(J_a)$ such that $E\leq E_a$. If $J_b$ is not greater than $E_a$, then $J_aJ_b=\sum K_if_i$ with $K_i\in \cJ$, $f_i\in \R$ such that $K_i$ is either smaller than $J_a$ or $K_i$ is not greater than $E$. 
\end{lemma}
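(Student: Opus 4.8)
The plan is to run a straightening argument parallel to that of Lemma~\ref{lemma:case2}, using the given maximal witness $E_a$ in place of the ad hoc witness there, and to extract the whole decomposition from a single Pl\"ucker-type relation supplied by Lemma~\ref{lem:fullrelation0}; the presence of a general $E$ is what forces the second alternative (``$K$ not greater than $E$'') in the conclusion, which Lemma~\ref{lemma:case2} did not need. Write $J_a=\bpartial^{n_a}|u^a_{h_a},\dots,u^a_1|$, $J_b=\bpartial^{n_b}|u^b_{h_b},\dots,u^b_1|$, and $E_a=|(v_{h_a},l_{h_a}),\dots,(v_1,l_1)|\in\cE(J_a)$; the ordering on $\cJ$ forces $h_a\ge h_b$. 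Since $J_b$ is not greater than $E_a$, the criterion of Lemma~\ref{lemma:critgreat} gives the strict inequality $i_0:=L(E_a,J_b)>wt(J_b)-wt(E_a(h_b))$, and (when $i_0>0$) the minimality in the definition of $L$ produces a defect index $i_1$ with $i_0\le i_1\le h_b$ at which the domination inequality defining $L(E_a,J_b)$ fails. The degenerate case $i_0=0$, where the obstruction is purely one of weight, I would treat first and separately, using the specialization of Lemma~\ref{lem:fullrelation0} with trivial $u$-shuffles.

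With this data, I would invoke Lemma~\ref{lem:fullrelation0} in the compact form~(\ref{eqn: relation4}) with $h=h_a$, $h'=h_b$, shuffling the bottom $j:=i_1$ entries of $J_b$ with the bottom $i:=h_a-(i_1-i_0)$ entries of $J_a$, with total weight $m=n_a+n_b$, and prescribing $a_{n_b}=1$ and $a_k=0$ for all other admissible $k$. The one place the hypothesis is really used is checking that this prescription is legitimate: for these choices $l_0=i+j-h_a-1=i_0-1$, so the freely prescribable window is $[\,n_b-i_0+1,\ n_b\,]$, of length $i_0$; the bound on $i_0$ from Lemma~\ref{lemma:critgreat} — together with the maximality of $E_a$, which pins down how low $wt(E_a(h_b))$ can be — is exactly what guarantees the window contains all the ``forbidden'' weights $\{n_a,\dots,n_b\}$, and when $h_b=h_a$ (so that $wt(E_a(h_b))=n_a$) even forces it to contain everything down to $n_a$, so that no small-size term survives.

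Solving the resulting identity for its distinguished term — which is $k=n_b$ with the trivial shuffle, and by the Remark after~(\ref{eqn: relation4}) equals $\pm J_aJ_b$ — every remaining term has the form $Kf$ with $K\in\cJ$, $f\in\R$, of one of the following kinds, in each of which $K$ is either $\prec J_a$ or not greater than $E$. (i) $k>n_b$: take $K=\bpartial^{m-k}|\cdots|$, of size $h_a$ and weight $m-k<n_a$, so $K\prec J_a$. (ii) $n_a\le k<n_b$: the term vanishes since $a_k=0$. (iii) $k<n_a$ and $h_b=h_a$: take $K=\bpartial^{k}|\cdots|$ (the ``lower'' Pfaffian), of size $h_a$ and weight $k<n_a$, so $K\prec J_a$. (iv) $k<n_a$ and $h_b<h_a$ (possible only when the window cannot be pushed down to $0$): here $K=\bpartial^{k}|\cdots|$ has size $h_b<h_a$, hence $K\succ J_a$, and one must instead show $K$ is not greater than $E$; its $u$-part is, up to a sign absorbed into $f$, a shuffle built mostly from entries of $J_b$, so the failure of $J_b$ to dominate $E_a\ge E$ transfers to a lower bound on $L(E,K)$ via Lemma~\ref{lemma:replace}, Lemma~\ref{lem:lrnumber}, Corollary~\ref{cor:lrnumber2} and the maximality of $E_a$ (Lemma~\ref{cor:compare}), which by Lemma~\ref{lemma:critgreat} is precisely the assertion that $K$ is not greater than $E$. (v) A term lying in $\R[h_a+2]$: by the constructive proof of Lemma~\ref{lem:basicrelation} it is a finite sum $\sum K_if_i$ with $sz(K_i)\ge h_a+2>h_a$, so each $K_i\prec J_a$. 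Finally, each $K$ that has appeared with permuted indices is rewritten in canonical increasing form via $\bpartial^{k}|u_{\sigma(h)},\dots|=\sign(\sigma)\bpartial^{k}|\cdots|$, the sign going into the matching $f$; assembling the pieces yields $J_aJ_b=\sum K_if_i$.

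The main obstacle is the bookkeeping of the second paragraph — fixing $i$, $j$, $m$ and the defect index $i_1$ so that the forbidden weights all land inside the prescribable window (and deciding exactly when, for $h_b<h_a$, the window reaches $0$) — together with the mixed-size estimate of case~(iv), namely that the unavoidable front-appearing size-$h_b$ factors are not greater than $E$. Everything else is a matter of unwinding the definitions and the sign/shuffle conventions already fixed in Section~\ref{sect:standardmono} and in the preceding section.
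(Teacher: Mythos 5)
Your overall frame (apply Lemma \ref{lem:fullrelation0} with $i+j=h_a+i_0$ so that the prescribable window $[n_b-i_0+1,n_b]$ covers all weights from $n_a$ to $n_b$, then sort the surviving terms by $k$) reproduces the paper's argument only in the equal-size case $h_a=h_b$ (Equation (\ref{eqn:relation111})), and even there you never dispose of the terms with $k=n_b$ and a \emph{non-identity} shuffle: your cases (i)--(v) are indexed by $k$ alone, but at $k=n_b$ the relation produces, besides $J_aJ_b$, cross terms whose upper factor has the same size and weight as $J_a$; in the equal-size case these are handled by the (easy, but necessary) observation that swapping in $u^b_{i'}\le u^b_{i_1}<u'_{i_1-i_0+1}$ makes the upper word lexicographically smaller, hence $\prec J_a$.

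The more serious gap is the mixed-size case $h_b<h_a$, where a single application of Lemma \ref{lem:fullrelation0} of the Case-1 shape does not suffice and your case (iv) estimates cannot repair it. The defect index $i_1$ is computed against the entries of $E_a(h_b)$, whose position inside $J_a$ is $i_2\ge i_1-i_0+1$ and may be strictly larger; consequently, at $k=n_b$ the swapped-in $u^b$ entries need not be smaller than the displaced $u'$ entries, so the upper factor need not be $\prec J_a$, while the lower factor is a size-$h_b$ Pfaffian mixing $u^b$ and $u'$ entries for which the required bound $L(E,K)>wt(K)-wt(E(h_b))$ is not available from the hypothesis: the known inequality $i_0>n_b-wt(E_a(h_b))$ concerns $E_a$, and $L(E,J_b)\le L(E_a,J_b)$ goes the wrong way, so ``$J_b$ not greater than $E_a$'' does not transfer to such mixed $K$ by Lemma \ref{lemma:replace} alone. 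This is exactly why the paper abandons the single relation there and instead sums families of relations --- Equation (\ref{eqn:relationterms}) when $n_b<wt(E_a(h_b))$, and Equations (\ref{eqn:relationterms51}), (\ref{eqn:relationterms53}) (split according to $i_2>i_1-i_0+1$ or $i_2=i_1-i_0+1$) when $n_b\ge wt(E_a(h_b))$ --- engineered so that the problematic $k=n_b$ cross terms cancel and the surviving lower sequences are built from enough $u'$ entries that Lemma \ref{cor:compare}, Corollaries \ref{cor:lrnumber1} and \ref{cor:lrnumber2}, and Lemma \ref{lem:lrnumber} (all resting on the maximality of $E_a$) yield ``not greater than $E$'' via Lemma \ref{lemma:critgreat}, together with Equation (\ref{eqn:numberl0}). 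Your proposal cites these lemmas but supplies no cancellation mechanism and no case distinction on $n_b$ versus $wt(E_a(h_b))$ or on $i_2$, so the central difficulty of the lemma is left unresolved.
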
 
 \begin{proof}
 	Assume
 	$J_a=\bpartial^{n_a}|u'_{h_a},\dots,u'_{1}|$ and $J_b=\bpartial^{n_b}|u^b_{h_b},\dots,u^b_{1}|$.
 	$J_a\prec J_b$, so $h_a\geq h_b$.
 	Assume
$||E_a(h_b)||=\bpartial^{m_a}|u^a_{h_b},\dots,u^a_{1}|$.
Let  $m=n_b+n_a$.
	Let $i_0=L(E_a, J_b)$.
 If $i_0\neq 0$, there is $i_0\leq i_1\leq h_b$, such that $u^b_{i_1}<u^a_{i_1-i_0+1}$. If $i_0=0$, let $i_1=0$.
Since $J_b$ is not greater than $ E_a$, by Lemma \ref{lemma:critgreat},
 \begin{equation}\label{eqn:numberl0} i_0> n_b-m_a.
 \end{equation}
 By definition, $\{u^a_{h_b},\dots,u^a_{1}\}$ is a subset of $\{u'_{h_a},\dots, u'_{1}\}$ with
 $u'_{i}<u'_{i+1}$ and $u^a_{i}<u^a_{i+1}$. If we assume $u'_{i_2}=u^a_{i_1-i_0+1}$, we have $i_2\geq i_1-i_0+1$.

Now we prove the lemma. The proof is quite long and it is divided into three cases.
	
	\textsl{Case 1:} $h_a=h_b$. Let $a_{n_b-l}=\delta^0_l$ for $0\leq l\leq i_0-1$. 
	By Lemma \ref{lem:fullrelation0}, there are integers $a_k$, such that 
	\begin{equation}\label{eqn:relation111}
	\sum \epsilon a_{k}
	\bpartial^{m-k} |\underline{u'_{h_a},\dots,u'_{i_1-i_0+1}},u'_{i_1-i_0},\dots,u'_1|\,\,
	\bpartial^{k} |u^b_{h_b},\dots,u^b_{i_1+1},\underline{u^b_{i_1},\dots,u^b_1}|
	\in \R[h_a+2].
	\end{equation}
	$J_aJ_b\in \R(J_a)$ since in the above equation, 
	\begin{itemize}
		\item
		All the terms with $k=n_b$ precede $J_a$ in the lexicographic order except $J_aJ_b$ itself;
		\item
		All the terms with $k=n_b-1,\dots,n_a$ vanish since $n_a=m_a$, $a_{k}=0$;
		\item
		All the terms with $k=n_b+1,\dots, m$ precede $J_a$ since the weight of the upper $\bpartial$-list is $m-k<n_a$;
		\item
		All the terms with $k=0,\dots, n_a-1$ precede $J_a$ after exchanging the upper $\bpartial$-list and the lower $\bpartial$-list since  the weight of the lower $\bpartial$-list is $k<n_a$;
		\item
		The terms in $\R[h_a+2]$ precede $J_a$ since they have bigger sizes.
	\end{itemize}

	\textsl{Case 2:} $h_b<h_a$ and $n_b<m_a$. 
	
	By Lemma \ref{lem:fullrelation0}, \begin{equation}\label{eqn:relationterms} 
	\sum_{0\leq i< h_b}\sum_{\sigma}\frac {(-1)^{i}\sign(\sigma)}{i!(h_b-i)}\sum \epsilon a^{i}_{k} 	\end{equation}
	$$	
	\bpartial^{m-k}|\underline{u'_{h_a},\dots,u'_{i+1}},u^b_{\sigma{(i)}},\dots,u_{\sigma(2)}^b,u^b_{\sigma(1)}|\,\,
	\bpartial^k|\underline{u_{\sigma(h_b)}^b,\dots,u^b_{\sigma{(i+1)}},u'_{i},\dots,u'_{1}}|\in \R[h_a+2].
$$
	
	Here  $a_k^{i}$ are integers and $a_{n_b-l}^{i}=\delta_{0,l}$ for $0 \leq l<h_b -i$. The second summation is over all permutations $\sigma$ of $\{1,\dots,h_b\}$.
	In equation \eqref{eqn:relationterms}:
	\begin{itemize}
		\item
		The terms in $\R[h_a+2]$ precede $J_a$ since they have bigger sizes.
		\item
		All the terms with $k=n_b,\dots, m$ precede $J_a$ since the weight of the upper $\bpartial$-list is $m-k<n_a$.
		\item
		The terms with $k=n_b$ are $J_aJ_b$
		and the terms with the lower $\bpartial$-lists $$K_0=\bpartial^{n_b}|u'_{i_{h_b}},\dots,u'_{i_2},u'_{i_1}|\in \cJ.$$
		All of the other terms cancel.
		By Corollary \ref{cor:lrnumber1} and \ref{cor:lrnumber2}, 
		$$L(E,K_0)\geq L(E,E_{a}(h_b))=m_a- wt(E(h_b)) >n_b- wt(E(h_b)).$$
		By Lemma \ref{lemma:critgreat}, $K_0$ is not greater than $E$.
		\item
		The terms with $k<n_b$ vanish unless $h_b-i\leq  n_b-k$. In this case the lower $\bpartial$-lists of the terms are
		$$K_1=\bpartial^k|u_{\sigma(h_b)}^b,\dots,u^b_{\sigma{(i+1)}},u'_{s_i},\dots,u'_{s_1}|.$$
	    By Lemma \ref{lemma:replace},
		$$L(E,K_1)\geq L(E,K_0)-(h_b-i).$$
		So
		$$L(E,K_1)>n_b-wt(E(h_b)) -(h_b-i) \geq k-wt(E(h_b)).$$	
		By Lemma \ref{lemma:critgreat}, $K_1$ is not greater than $E$. 
	\end{itemize}

	\textsl{Case 3:}
	$h_a>h_b$ and $n_b\geq m_a$.
 If $i_0=0$, then $J_b$ is greater than $E_a$ and $J_1\cdots J_b$ is standard.
		So $i_0> 0$.
		By Lemma \ref{lem:fullrelation0}, we have
		\begin{equation}\label{eqn:relationterms51}
		\sum_{ \min\{i_1, i_2\}>s}\sum_{\sigma}\frac {(-1)^{s}\sign(\sigma)}{s!(i_2-1-s)!}\sum_{k=0}^{m} \epsilon a^{s}_{k} 	\end{equation}
		$$
		\bpartial^{m-k}|\underline{u'_{h_a} ,\dots,u'_{i_2} ,u'_{\sigma(i_2-1)},\dots, u'_{\sigma(s+1)},u_{s}^b,\dots,u^b_{1}}|\,\,
		\bpartial^k|u^b_{h_b},\dots,u^b_{i_1+1},\underline{u^b_{i_1},\dots,u^b_{s+1}},u'_{\sigma(s)},\dots,u'_{\sigma(1)}|
	$$
		$$  \in \R[h_a+2].$$
		Here $a_k^{s}$ are integers, $a_{n_b-l}^{s}=\delta_{0,l}$
		for $0 \leq l<i_1-s$, and $\sigma$ are permutations of $\{1,\dots,i_2-1\}$. Similarly, by Lemma \ref{lem:fullrelation0}, we have

	\begin{equation}\label{eqn:relationterms53}
	\sum_{\substack{i\geq i_1> s\\ i_2>s}}\sum_{\sigma,\sigma_1}\frac {(-1)^{i+s+t}\sign(\sigma)\sign(\sigma_1)}{(i-i_1)!(h_b-i)!s!(i_2-1-s)!}\sum_{k=0}^{m}\epsilon a^{i,s}_{k} 	
	\end{equation}
		$$
		\bpartial^{n-k}|u^b_{\sigma(h_b)}\dots\underline{u^b_{\sigma(i)}\dots u^b_{\sigma(i_1+1)},u^b_{i_1} \dots u^b_{1},u'_{h_a} \dots  u'_{h_b+1}}
	|\,
		\bpartial^k|\underline{u'_{h_b+1},\dots u'_{i_2},u'_{\sigma_1(i_2-1)} ,\dots u'_{\sigma_1(s+1)}},u'_{\sigma_1(s)}, \dots u'_{\sigma_1(1)}|
		$$
		$$  \in \R[h_a+2].$$
		Here $a_k^{i,s}$ are integers, $a_{n_b-l}^{i,s}=\delta_{0,l}$ for $0 \leq l<(i+j_1-s)$, 
		$\sigma$ are permutations of $\{h_b,\dots, i_1+1\}$, and
		$\sigma_1$ are permutations of $\{1,\dots,i_2-1\}$.
		\begin{itemize}
			\item We use Equation (\ref{eqn:relationterms51}) when $i_2>i_1-i_0+1$;
			\item We use Equation (\ref{eqn:relationterms53}) when $i_2=i_1-i_0+1$. 
		\end{itemize}
		In the above relations, we have the following:
		\begin{enumerate}
			\item
			The terms in $\R[h_a+2]$ precede $J_a$ since they have bigger sizes.
			\item
			All of the terms with $k=n_b+1,\dots, n$ precede $J_a$ since the weight of the upper $\bpartial$-list is $n-k<n_a$.
			\item
			The terms with $k=n_b$ are $J_aJ_b$, the terms with upper $\bpartial$-list preceding $J_a$ (the upper $\bpartial$-lists are the $\bpartial$-lists given by replacing some $u'_{i}$, $i\geq i_2$ in $J_a$ by some $u^b_{k}$), and the terms with lower $\bpartial$-list
			$$K_0=\bpartial^{n_b}|u_{h_b}^b,\dots,u_{i_1+1}^b, u'_{\sigma_1(i_1)},\dots u'_{\sigma_1(1)}|.$$
			All of the other terms cancel.
			By Lemma \ref{lem:lrnumber}, 
			$$L(E,K_0)>L(E,||E_a(h)||)+i_1-(i_1-i_0+1);$$
			By the above inequality,
			\begin{eqnarray*}L(E,K_0)&\geq & L(E,||E_a(h_b)||)+i_0\\
				(\text{by Corollary \ref{cor:lrnumber1}})\quad\quad
				&=&wt(E_a(h_b))-wt(E)+i_0\\
			(\text{by Equation (\ref{eqn:numberl0}})\quad\quad	&>&wt(E_b)-wt(E(h_b)).
			\end{eqnarray*}

			By Lemma \ref{lemma:critgreat}, $K_0$ is not greater than $E$.
	
			\item When $i_2>i_1-i_0+1$,
			the terms with $k<n_b$ in Equation (\ref{eqn:relationterms51}) vanish unless $i_1-s\leq  n_b-k$. In this case the lower $\bpartial$-lists of the terms are
			$$K_1=\bpartial^k|u^b_{h_b},\dots,u^b_{i_1+1},\underline{u^b_{i_1},\dots,u^b_{s+1}},u'_{\sigma(s)},\dots,u'_{\sigma(1)}|.$$
			The underlined $u$ in $K_1$ can be any underlined $u$ in Equation (\ref{eqn:relationterms51}). By Lemma \ref{lem:lrnumber}
			$$L(E,K_1)> L(E,E_a(h_b))+s-(i_1-i_0+1);$$
			\begin{eqnarray*}
				L(E,K_1)&\geq& L(E,E_a(h_b))+k-n_b+i_0\\
				(\text{by Corollary \ref{cor:lrnumber1}}) &=&wt(E_a(h_b))-wt(E(h_b))+k-n_b+i_0\\
			(\text{by Equation (\ref{eqn:numberl0}})\,	&>& k-wt(E(h_b)).
			\end{eqnarray*}
			By Lemma \ref{lemma:critgreat}, $K_1$ is not greater than $E$.
			
			\item When $i_2=i_1-i_0+1$, the terms with $k<n_b$ are the terms in the Equation (\ref{eqn:relationterms53}), such that $i-s\leq  n_b-k$. In this case, the lower $\bpartial$-lists of the terms are
			$$K_1=\bpartial^k|\underline{u'_{h_b+1},\dots, u'_{i_2},u'_{\sigma_1(i_2-1)} ,\dots ,u'_{\sigma_1(s+1)}},u'_{\sigma_1(s)} ,\dots ,u'_{\sigma_1(1)}|.$$
			The underlined $u$ in $K_1$ can be any underlined $u$ in Equation (\ref{eqn:relationterms53}).
			Let 
			$$
			K'_1=\bpartial^k|u'_{k_{h_b}},\dots,
			u'_{k_{i_2}},u'_{\sigma(i_2-1)},\dots,u'_{\sigma(1)}|.$$
			Here $i_2\leq k_{i_2}< k_{i_2+1}<\cdots< k_{h_b}\leq h_a$. By Lemma \ref{lemma:replace}, there is some $K'_1$ such that 
			\begin{eqnarray}
			\label{eqn:RK1-3}
			L(E,K_1)\geq L(E,K'_1)-(i_2-1-s)-(i-i_1),
			\end{eqnarray}
			since in $K_1$ the number of $u^b_l$ with $u^b_l>u'_{i_2}$ is at most $i-i_1$. By Corollary \ref{cor:lrnumber2},
			\begin{equation}\label{eqn:LRK1-3+}
	 L(E,K_1')\geq L(E,||E_a(h_b)||).
			\end{equation}
			By Equations (\ref{eqn:RK1-3}, \ref{eqn:LRK1-3+}),
			\begin{eqnarray*}
				L(E,K_1)&\geq& L(E,||E_a(h_b)||)+s-(i-i_0)\\
					(\text{by Corollary \ref{cor:lrnumber1}})&\geq& wt(E_a(h_b))-wt(E(h_b))+k-n_b+i_0\\
				(\text{by Equation \ref{eqn:numberl0}})	&>& k-wt(E(h_b)).
			\end{eqnarray*}
			By Lemma \ref{lemma:critgreat}, $K_1$ is not greater than $E$.
		\end{enumerate}

	\end{proof}

\end{document}